\newtheorem{lem}{Lemma}[section]
\newtheorem{thm}[lem]{Theorem}
\newtheorem{prop}[lem]{Proposition}
\newtheorem{cor}[lem]{Corollary}
\newtheorem{conj}[lem]{Conjecture}
\newtheorem{ex}[lem]{Example}
\theoremstyle{definition}
\newtheorem{definition}[lem]{Definition}
\DeclareMathAlphabet{\curly}{U}{rsfs}{m}{n}
\newcommand{\gon}{\operatorname{gon}}
\newcommand{\Q}{\mathbb{Q}}
\newcommand{\C}{\mathbb{C}}
\newcommand{\Z}{\mathbb{Z}}
\newcommand{\N}{\mathbb{N}}
\newcommand{\F}{\mathbb{F}}
\newcommand{\SL}{\operatorname{SL}}
\newcommand{\PP}{{\mathbb P}}
\mathchardef\mhyphen="2D
\title{Tetragonal modular quotients $X_0^*(N)$}
\author{\sc Petar Orli\'c}
\address{Petar Orli\'c \\
University of Zagreb\\  
Bijeni\v{c}ka Cesta 30 \\
10000 Zagreb\\
Croatia}
\email{petar.orlic@math.hr}
\begin{document}
\begin{abstract}
    Let $N$ be a positive integer. For every $d\mid N$ such that $(d,N/d)=1$ there exists an Atkin-Lehner involution $w_d$ of the modular curve $X_0(N)$. The curve $X_0^*(N)$ is a quotient curve of $X_0(N)$ by $B(N)$, the group of all involutions $w_d$. In this paper we determine all quotient curves $X_0^*(N)$ whose $\C$-gonality is equal to $4$. We also determine all curves $X_0^*(N)$ whose $\Q$-gonality is equal to $4$ with the exception of level $N=378$.
\end{abstract}

\subjclass{11G18, 11G30, 14H30, 14H51}
\keywords{Modular curves, Gonality}

\maketitle

\section{Introduction}
Let $C$ be a smooth projective curve over a field $k$. Studying $k$-rational points on $C$ and points of small degree over $k$ is an interesting subject in arithmetic geometry. Morphisms $C\to Y$ defined over $k$ are a source of such points. This is one of the reasons to study morphisms from $C$.

The $k$-gonality of $C$, denoted by $\textup{gon}_k C$, is the least degree of a non-constant $k$-rational morphism $f:C\to\mathbb{P}^1$. For curves of genus $g\geq2$ there exists an upper bound for $\textup{gon}_k C$, linear in terms of the genus, see Proposition \ref{poonen}.

When $C$ is a modular curve, there also exists a linear lower bound for the $\C$-gonality. This was first proved by Zograf \cite{Zograf1987}. The constant was later improved by Abramovich \cite{abramovich} and by Kim and Sarnak in Appendix 2 to \cite{Kim2002}.

The gonality of the modular curve $X_0(N)$ and its quotients has been extensively studied over the years. Ogg \cite{Ogg74} determined all hyperelliptic curves $X_0(N)$, Bars \cite{Bars99} determined all bielliptic curves $X_0(N)$, Hasegawa and Shimura determined all trigonal curves $X_0(N)$ over $\C$ and $\Q$, Jeon and Park determined all tetragonal curves $X_0(N)$ over $\C$, and Najman and Orlić \cite{NajmanOrlic22} determined all curves $X_0(N)$ with $\Q$-gonality equal to $4$, $5$, or $6$, and also determined the $\Q$ and $\C$-gonality for many other curves $X_0(N)$.

Regarding the gonality of the quotients of the curve $X_0(N)$, Furumoto and Hasegawa \cite{FurumotoHasegawa1999} determined all hyperelliptic quotients, and Hasegawa and Shimura \cite{HasegawaShimura1999,HasegawaShimura2000,HasegawaShimura2006} determined all trigonal quotients over $\C$. Jeon \cite{JEON2018319} determined all bielliptic curves $X_0^+(N)$, Bars, Gonzalez, and Kamel \cite{BARS2020380} determined all bielliptic quotients of $X_0(N)$ for squarefree levels $N$, Bars and Gonzalez determine all bielliptic curves $X_0^*(N)$, and Bars, Kamel, and Schweizer \cite{bars22biellipticquotients} determined all bielliptic quotients of $X_0(N)$ for non-squarefree levels $N$, completing the classification of bielliptic quotients.

The next logical step is to determine all tetragonal quotients of $X_0(N)$. All tetragonal quotients $X_0^{+d}(N)=X_0(N)/\left<w_d\right>$ over $\C$ and $\Q$ were determined in \cite{Orlic2023,Orlic2024}. Here, we will do the same for the curves $X_0^*(N)$. We also determine all curves $X_0^*(N)$ of genus $4$ that are trigonal over $\Q$, thus completing the classification of all $\Q$-trigonal curves $X_0^*(N)$, which has already mostly been done by Hasegawa and Shimura \cite{HasegawaShimura2000}.

Our main results are the following theorems.

\begin{thm}\label{trigonalthm}
    The curve $X_0^*(N):=X_0(N)/w_d$ is of genus $4$ and has $\Q$-gonality equal to $3$ if and only if \begin{align*}
        N\in\{&137,148,172,201,202,214,219,242,253,254,260,\\
        &262,302,305,319,323,345,351,395,434,555\}.
    \end{align*}
\end{thm}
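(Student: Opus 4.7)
The plan is to first identify the finite set of levels $N$ where $X_0^*(N)$ has genus $4$, and then decide $\Q$-trigonality for each candidate by analyzing an explicit canonical model.

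I would compile the list of all $N$ for which $X_0^*(N)$ has genus $4$, using the standard genus formula obtained from Riemann--Hurwitz applied to $X_0(N)\to X_0^*(N)$ together with the dimensions of the joint Atkin--Lehner $+1$-eigenspaces in $S_2(\Gamma_0(N))$. The Zograf--Abramovich linear lower bound for the $\C$-gonality of modular curves, together with the Castelnuovo-style upper bound cited in the introduction, ensures that this list is finite. From it I would discard the hyperelliptic levels (known from Furumoto--Hasegawa), as those have gonality $2$, and the levels whose $\Q$-trigonality was already settled by Hasegawa--Shimura; what remains is a manageable list of candidates to be classified as $\Q$-trigonal or not.

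For each surviving $N$ I would construct an explicit canonical model. Take a $\Q$-rational basis $f_1,\dots,f_4$ of the $B(N)$-invariant subspace $S_2(\Gamma_0(N))^{B(N)}$ and consider the canonical map $X_0^*(N)\to\PP^3$, $\tau\mapsto[f_1(\tau):\cdots:f_4(\tau)]$. Comparing $q$-expansions to sufficient precision, a finite linear-algebra computation identifies the unique irreducible quadric $Q\subset\PP^3$ containing the canonical image. For a non-hyperelliptic curve of genus $4$, the $g^1_3$'s are cut out by the rulings of $Q$: if $Q$ is a quadric cone over $\Q$, its single ruling is automatically $\Q$-rational; if $Q$ is smooth, the two rulings are $\Q$-rational exactly when the quadratic form defining $Q$ has square discriminant in $\Q^\times$, equivalently $Q\cong\PP^1\times\PP^1$ over $\Q$; in any other case the rulings are conjugate over a quadratic extension and $X_0^*(N)$ carries no $\Q$-rational $g^1_3$. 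Applying this criterion to each surviving level decides $\Q$-trigonality, and the $21$ listed levels should be precisely those yielding a cone or a split smooth quadric.

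The main obstacle I expect is the explicit, case-by-case computation: producing a $\Q$-rational basis of $S_2(\Gamma_0(N))^{B(N)}$ with enough Fourier coefficients to rigorously identify $Q$, then computing either the singular locus or the discriminant of the associated quadratic form to place $Q$ in the correct class (cone, split smooth, or non-split smooth). Uniformity across the genus-$4$ list is the real work; the theoretical framework (genus formula, canonical embedding, ruling analysis of $Q$) is classical, but extracting the clean $21$-level list requires careful bookkeeping to ensure that no candidate $N$ is either missed in the genus enumeration or misclassified at the discriminant step.
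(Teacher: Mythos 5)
Your proposal is essentially the paper's proof: the paper enumerates the genus-$4$ levels and then invokes \texttt{Magma}'s Genus4GonalMap(), which internally performs exactly the analysis you describe (canonical model in $\PP^3$, the unique quadric containing it, and the rationality of its ruling or rulings), reporting whether the trigonal pencil is defined over $\Q$ or only over a quadratic or biquadratic extension. The only additional point is the prime-power level $N=137$, which the paper handles by identifying $X_0^*(137)$ with $X_0^+(137)$ and citing earlier work, consistent with your plan to defer to Hasegawa--Shimura for already-settled levels.
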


\begin{thm}\label{tetragonalthm}
    Suppose that $N\neq378$. The curve $X_0^*(N)$ has $\Q$-gonality equal to $4$ if and only if $N$ is one of the following $106$ values in the following table. Here $g$ is the genus of the curve $X_0^*(N)$.
\begin{center}
\begin{longtable}{|c|c|}

\hline
\addtocounter{table}{-1}
$N$ & $g$\\
    \hline

\makecell{$160,173,199,200,224,225,228,247,251,259,261,264,267,$\\$273,275, 280,300,306,308,311,321,322,334,335,341,342,$\\$350,354,355,356,370,374,385,399,426,483,546,570$} & $4$\\

\hline

\makecell{$157,192,208,212,216,218,226,235,237,250,263,278,216,$\\$339,364,371,376,377,382,391,393,396,402,406,407,410,$\\$413,414,418,435,438,440,442,444,465,494,495,551,$\\$555,572,574,595,630,645,663,714,770,798,910$} & $5$\\

\hline

\makecell{$163,197,211,223,244,265,269,272,274,297,301,332,336,$\\$340,359,369,375,447,470,564,598,620,627,670,780$} & $6$\\

\hline

\makecell{$193,232,241,257,268,281,288,296,320,326,360,368,372,$\\$423,456,460,504,558,658,660$} & $7$\\

\hline

$292,304,344,412,520,532,585,990$ & $8$\\

\hline

$328,528,560$ & $9$\\
 
\hline

\caption*{}
\end{longtable}
\end{center}

Furthermore, the curve $X_0^*(N)$ has $\C$-gonality equal to $4$ and $\Q$-gonality greater than $4$ if and only if $N$ is one of the following $16$ values in the following table.

\begin{center}
\begin{longtable}{|c|c|}

\hline
\addtocounter{table}{-1}
$N$ & $g$\\
    \hline

\hline

$271,291,314,325,327,338,398,451,506,561,590,609,615,690,858$ & $6$\\

\hline

$243$ & $7$\\
 
\hline

\caption*{}
\end{longtable}
\end{center}

The genus $5$ curve $X_0^*(378)$ also has $\C$-gonality equal to $4$.
\end{thm}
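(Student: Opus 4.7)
The strategy follows the two-sided template for gonality classifications of modular curves. First I would cut the infinite family of levels down to a finite list of candidates using the Abramovich--Kim--Sarnak linear lower bound on $\gon_\C X_0(N)$ in the index $[\SL_2(\Z):\Gamma_0(N)]$. Since $X_0^*(N)$ is the quotient of $X_0(N)$ by the Atkin--Lehner group $B(N)$ of order $2^{\omega(N)}$, any degree-$4$ morphism $X_0^*(N)\to\PP^1$ pulls back to a morphism $X_0(N)\to\PP^1$ of degree at most $4\cdot 2^{\omega(N)}$, so only finitely many $N$ can yield $\gon_\C X_0^*(N)\leq 4$. Combining this with the genus formula for $X_0^*(N)$ and the genus upper bound on gonality (Proposition \ref{poonen}), and excluding the cases with gonality at most $3$ already classified in the literature cited above, produces an explicit finite list of candidate levels to examine.

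For each candidate one must produce matching upper and lower bounds. For the upper bound the most efficient route is to exploit further quotient structure on $X_0^*(N)$: any involution $\iota$ whose quotient $X_0^*(N)/\langle\iota\rangle$ is either hyperelliptic or an elliptic curve gives, by composition with a degree-$2$ map to $\PP^1$, a $g^1_4$ defined over $\Q$; in particular, the bielliptic $X_0^*(N)$ classification of Bars--Gonz\'alez covers many of the $\Q$-tetragonal examples directly. For the remaining candidates one writes down a canonical model of $X_0^*(N)$ from $q$-expansions of a basis of the $B(N)$-invariant part of $S_2(\Gamma_0(N))$ and extracts a degree-$4$ pencil by projecting the canonical image from a suitable effective divisor.

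For the matching lower bounds the two complementary tools are reduction modulo a prime and the Castelnuovo--Severi inequality. If $p\nmid N$ is a prime of good reduction and $\#X_0^*(N)(\F_p)>4(p+1)$, then no degree-$4$ pencil exists over $\F_p$, hence none over $\Q$; the point counts follow from Hecke traces on the $B(N)$-invariant part of $S_2(\Gamma_0(N))$. When the point counts are inconclusive one applies Castelnuovo--Severi to the putative $g^1_4$ together with a known low-degree map from $X_0^*(N)$ to another modular curve (for instance a further Atkin--Lehner quotient) to obtain a numerical incompatibility.

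The main obstacle will be distinguishing $\Q$-gonality from $\C$-gonality, which is what forces the two-table split and the exclusion of $N=378$. Over $\C$ it is enough to exhibit a single $g^1_4$; to conclude that the $\Q$-gonality is strictly greater one must show that none of the $g^1_4$s on $X_0^*(N)_{\overline{\Q}}$ is defined over $\Q$, that is, none of the corresponding divisor classes is $\Gal(\overline{\Q}/\Q)$-invariant. Depending on the curve, one can classify all such pencils through the Clifford index or through the geometry of the canonical model and then analyse how Galois permutes them. For $N=378$ a $\C$-gonality-$4$ pencil can be accessed, for instance via a bielliptic quotient, but the currently available methods do not settle whether any of the $g^1_4$s descends to $\Q$, leaving the $\Q$-gonality of $X_0^*(378)$ genuinely open.
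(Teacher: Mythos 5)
Your skeleton agrees with the paper on the finiteness reduction (Abramovich with the Kim--Sarnak constant, then Ogg-type point counts), on the upper bounds via further quotient structure (the paper uses the involutions $V_2$, $V_3$ and the isomorphism $X_0^*(4M)\cong X_0(2M)/\langle\{w_{p^r}\}\rangle$ mapping onto genus~$\le 2$ quotients, plus explicit degree-$4$ divisors of Riemann--Roch dimension $2$ found by point search), and on $\F_p$-point counts as a source of $\Q$-gonality lower bounds. But there are two genuine gaps. First, you have no workable tool for proving $\gon_\C>4$, which is needed both for the completeness of the classification and for the ``only if'' direction of the second table. Point counts over $\F_q$ only bound the $\F_q$- (hence $\Q$-) gonality, and Castelnuovo--Severi requires an auxiliary low-degree map that simply is not available for most of these curves; the paper never uses it. What the paper actually does is compute the graded Betti number $\beta_{2,2}$ of the canonical model and invoke Green--Lazarsfeld: $\beta_{2,2}=0$ rules out any $g^1_4$ over $\C$ (Proposition~\ref{betti0prop}), which handles genus $7$--$9$; for genus $\ge 10$ it instead proves $\gon_\Q>4$ by exhaustive Riemann--Roch computations on degree-$4$ effective $\F_p$-divisors and then upgrades this to $\gon_\C>4$ via the Tower Theorem (Corollary~\ref{towerthmcor}), a step entirely absent from your plan.

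Second, your proposal for separating $\Q$-gonality from $\C$-gonality --- ``classify all $g^1_4$'s and analyse how Galois permutes them'' --- is not a method that can be executed for these curves as stated. The paper's concrete mechanisms are: for genus $6$, the fact that a non-bielliptic curve that is not a smooth plane quintic has only finitely many gonal pencils, so the minimal field of definition of a gonal map is computable (Proposition~\ref{genus6prop}); and, in the positive direction, Schreyer's theorem that $\beta_{2,2}=g-4$ forces a \emph{unique} $g^1_4$, which by Ro\'e--Xarles descends to a map to a conic over $\Q$ that is trivialized by the rational cusp (Proposition~\ref{betti_positive_prop}). Without these (or equivalent) criteria your argument cannot certify either that a given level belongs to the second table or that a unique complex pencil is actually realized over $\Q$. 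Your reading of the $N=378$ situation is essentially right, except that the obstruction is not accessed through a bielliptic quotient: the $g^1_4$'s there form a positive-dimensional family parametrized by a plane quintic, and the open question is whether that quintic has a suitable rational point.
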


We use a variety of methods to prove these results. Each section roughly corresponds to one method. The paper is organized as follows: \begin{itemize}
    \item In \Cref{preliminaries_section} we eliminate all but finitely many levels $N$ for which the curve $X_0^*(N)$ is not $\Q$-tetragonal.
    \item In \Cref{genus46section} we determine the $\Q$-gonality of genus $4$ curves and some genus $6$ curves. We deal with the remaining genus $6$ curves in \Cref{morphism_section}.
    \item In \Cref{Fpsection} we give lower bounds on the $\Q$-gonality via $\F_p$-gonality, either by counting points over $\F_{p^n}$ or computing the Riemann-Roch dimensions of degree $4$ effective $\F_p$-rational divisors.
    \item In \Cref{betti_section} we determine the $\C$-gonality by computing Betti numbers $\beta_{i,j}$. We use them both to prove and disprove the existence of degree $4$ morphisms over $\C$ (and $\Q$) to $\PP^1$.
    \item In \Cref{morphism_section} we first use \texttt{Magma} to construct degree $4$ rational morphisms to $\PP^1$ by finding a degree $4$ rational divisor $D\geq0$ with Riemann-Roch dimension $2$. In the second part of the section we construct quotient maps from $X_0^*(N)$. We use them to construct degree $4$ rational maps to $\PP^1$, but also to give lower bound on the $\Q$-gonality for some levels $N$.
    \item In \Cref{thmproof_section} we combine all these results to obtain proofs of Theorems \ref{trigonalthm} and \ref{tetragonalthm}. After that we give a short discussion about the only remaining level for the $\Q$-gonality, $N=378$.
\end{itemize}

A lot of the results in this paper rely on \texttt{Magma} computations. The version of \texttt{Magma} used in the computations is V2.28-3. This is important because, for example, the function X0Nstar() was extensively used for obtaining models of curves $X_0^*(N)$. This function is not present in the version V2.27-1, for example. The codes that verify all computations in this paper can be found on
\begin{center}
    \url{https://github.com/orlic1/gonality_X0_star}.
\end{center}
All computations were performed on the Euler server at the Department of Mathematics, University of Zagreb with a Intel Xeon W-2133 CPU running at 3.60GHz and with
64 GB of RAM.

\section*{Acknowledgements}

Many thanks to Francesc Bars Cortina for permission to use \texttt{Magma} codes from his Github repository
\begin{center}
    \url{https://github.com/FrancescBars/Magma-functions-on-Quotient-Modular-Curves}.
\end{center} as templates and for providing useful references in the literature. I am also grateful to Filip Najman and Maarten Derickx for their helpful comments and suggestions.

This paper was written during my visit to the Universitat Aut`onoma de Barcelona and I am grateful to the Department of Mathematics of Universitat Aut`onoma de Barcelona for its support and hospitality.

The author was supported by the Croatian Science Foundation under the project no. IP-2022-10-5008 and by the project “Implementation of cutting-edge research and its application as part of the Scientific Center of Excellence for Quantum and Complex Systems, and Representations of Lie Algebras“, Grant No. PK.1.1.02, co-financed by the European Union through the European Regional Development Fund - Competitiveness and Cohesion Programme 2021-2027.

\section{Preliminaries}\label{preliminaries_section}

We first give a very important result that we will use throughout the paper. It was briefly mentioned in the Introduction.

\begin{prop}[{\cite[Proposition A.1.]{Poonen2007}}]\label{poonen}
    Let $X$ be a curve of genus $g$ over a field $k$.
    \begin{enumerate}[(i)]
        \item If $L$ is a field extension of $k$, then $\textup{gon}_L(X)\leq \textup{gon}_k(X)$.
        \item If $k$ is algebraically closed and $L$ is a field extension of $k$, then $\textup{gon}_L(X)=\textup{gon}_k(X)$.
        \item If $g\geq2$, then $\textup{gon}_k(X)\leq 2g-2$.
        \item If $g\geq2$ and $X(k)\neq\emptyset$, then $\textup{gon}_k(X)\leq g$.
        \item If $k$ is algebraically closed, then $\textup{gon}_k(X)\leq\frac{g+3}{2}$.
        \item If $\pi:X\to Y$ is a dominant $k$-rational map, then $\textup{gon}_k(X)\leq \deg \pi\cdot\textup{gon}_k(Y)$.
        \item If $\pi:X\to Y$ is a dominant $k$-rational map, then $\textup{gon}_k(X)\geq\textup{gon}_k(Y)$.
    \end{enumerate}
\end{prop}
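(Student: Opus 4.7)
The plan is to verify each of the seven items in turn. Parts (i), (iii), (v), (vi) are essentially direct. For (i), any $k$-rational morphism $X \to \PP^1_k$ base-changes to an $L$-rational morphism of the same degree. For (vi), composing a gonal map $Y \to \PP^1_k$ with $\pi$ yields a morphism of degree $\deg\pi \cdot \gon_k(Y)$. For (iii), the canonical system $|K|$ satisfies $h^0(K) = g \geq 2$ and has degree $2g-2$, so projecting the induced map $X \to \PP^{g-1}$ to any line gives a morphism $X \to \PP^1$ of degree $\leq 2g-2$. For (v), Brill-Noether theory provides $W^1_d(X) \neq \emptyset$ (valid for any curve over an algebraically closed field) whenever $\rho(g,1,d) = 2d - g - 2 \geq 0$, hence for $d = \lceil (g+2)/2 \rceil \leq (g+3)/2$. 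For (ii), the reverse inequality to (i) is a spreading-out argument: a degree-$d$ morphism $X_L \to \PP^1_L$ is defined over a finitely generated $k$-subalgebra $R \subset L$; since $k$ is algebraically closed, $\Spec R$ has a $k$-point, and specializing there produces a $k$-rational morphism of the same degree.

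For (iv), the plan splits by whether $X$ is hyperelliptic. If it is, the unique hyperelliptic involution $\iota$ is automatically $k$-rational, the conic $X/\iota$ acquires a $k$-rational point from any $P \in X(k)$, so $X/\iota \cong \PP^1_k$ and $\gon_k(X) = 2 \leq g$. Otherwise, for $P \in X(k)$, Riemann-Roch yields $h^0(K - (g-2)P) = 1 + h^0((g-2)P) \geq 2$ while $\deg(K - (g-2)P) = g$, so the $k$-rational linear system $|K - (g-2)P|$ realizes a morphism $X \to \PP^1$ of degree at most $g$.

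The main obstacle is (vii). Given $f: X \to \PP^1$ realizing $\gon_k(X) = n$, form the combined morphism $(\pi, f): X \to Y \times \PP^1$ and let $Z$ denote the normalization of its image, with projections $p_Y: Z \to Y$ of degree $b$ and $p_1: Z \to \PP^1$ of degree $c$ satisfying $a c = n$ for $a = \deg(X \to Z)$. Define the $k$-rational morphism $\varphi: \PP^1 \to \Sym^c Y$ by $t \mapsto p_{Y*}(p_1^{-1}(t))$; dominance of $p_Y$ forces $\varphi$ to be non-constant. Composing with the Abel-Jacobi map $\Sym^c Y \to \Pic^c Y$ must give a constant morphism, since any morphism from $\PP^1$ to an abelian variety is constant. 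Hence $\varphi$ lands in a single linear system $|D|$ of degree $c$ and dimension $\geq 1$, yielding a $k$-rational morphism $Y \to \PP^1$ of degree at most $c \leq n$, so $\gon_k(Y) \leq n$. The crux is this pushforward construction, which extracts a pencil on $Y$ from the gonal pencil on $X$.
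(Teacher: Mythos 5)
Your proof is correct; the paper itself gives no argument here but simply quotes this as Proposition A.1 of Poonen's \emph{Gonality of modular curves in characteristic $p$}, and your arguments for (i)--(vii) are essentially the standard ones found there (base change, spreading out and specializing at a $k$-point of $\Spec R$ for (ii), the canonical pencil for (iii), $|K-(g-2)P|$ for (iv), Brill--Noether existence for (v), and for (vii) the pushforward $\PP^1\to\Sym^c Y\to\Pic^c Y$ killed by the absence of nonconstant maps from $\PP^1$ to a torsor under $\Jac(Y)$). Two trivial remarks: in (iv) the hyperelliptic/non-hyperelliptic split is unnecessary, since the Riemann--Roch computation $h^0(K-(g-2)P)\ge 2$ already covers both cases; and in (vii) the case $g(Y)=0$ is handled automatically because $\Pic^0(Y)$ is then trivial and $\varphi(0)$ is still a $k$-rational degree-$c$ divisor with $h^0\ge 2$.
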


In order to determine all $\Q$ and $\C$-tetragonal curves $X_0^*(N)$, we need to provide an upper bound for possible levels $N$. We use Abramovich's lower bound on the $\C$-gonality of modular curves \cite[Theorem 0.1]{abramovich} with Kim and Sarnak's constant $\lambda=\frac{2^{15}}{325}$ \cite[Appendix 2]{Kim2002}. This result was more clearly stated in \cite{JeonKimPark06}.

\begin{thm}\cite[Theorem 1.2.]{JeonKimPark06}\label{kimsarnakbound}
    Let $X_\Gamma$ be the algebraic curve corresponding to a congruence subgroup $\Gamma\subseteq \SL_2(\Z)$ of index
    $$D_\Gamma=[\SL_2(\Z):\pm\Gamma].$$
    If $X_\Gamma$ is $d$-gonal, then $D_\Gamma\leq \frac{2^{15}}{325}d$.
\end{thm}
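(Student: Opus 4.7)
The plan is to combine two classical inputs from the spectral geometry of modular curves: a uniform lower bound on the smallest nonzero Laplacian eigenvalue of $X_\Gamma$, and the Li--Yau conformal volume inequality, which relates that eigenvalue to the degree of any conformal map into the Riemann sphere.

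First, I would endow $X_\Gamma$ with the hyperbolic metric descended from the upper half plane $\mathcal{H}$. Since the standard fundamental domain for $\SL_2(\Z)\backslash\mathcal{H}$ has hyperbolic area $\pi/3$, multiplicativity of volume under the $D_\Gamma$-fold cover $X_\Gamma \to X(1)$ yields
$$\operatorname{Vol}(X_\Gamma) = \frac{\pi}{3}\,D_\Gamma.$$

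Next, I would invoke the Kim--Sarnak spectral bound: for any congruence subgroup $\Gamma \subseteq \SL_2(\Z)$,
$$\lambda_1(X_\Gamma) \;\geq\; \frac{1}{4} - \left(\frac{7}{64}\right)^{\!2} \;=\; \frac{975}{4096}.$$
This is the genuinely deep input; it rests on the functoriality results of Kim--Shahidi and Kim--Sarnak for symmetric-power $L$-functions attached to $\GL_2$ Maass forms, and I would treat it as a black box. If $X_\Gamma$ is $d$-gonal over $\C$, then a degree-$d$ morphism $X_\Gamma \to \PP^1_\C$ is simultaneously a conformal map to the round sphere $S^2$, so the Li--Yau conformal volume inequality gives
$$\lambda_1(X_\Gamma)\cdot \operatorname{Vol}(X_\Gamma) \;\leq\; 8\pi\, d.$$

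Combining the three ingredients,
$$\frac{975}{4096}\cdot\frac{\pi}{3}\,D_\Gamma \;\leq\; 8\pi\, d \qquad\Longrightarrow\qquad D_\Gamma \;\leq\; \frac{8\cdot 3\cdot 4096}{975}\,d \;=\; \frac{2^{15}}{325}\,d,$$
which is exactly the claim. The clear main obstacle is the spectral-gap bound; the volume computation and Li--Yau are essentially routine once stated. Abramovich's original formulation used Selberg's $\lambda_1 \geq 3/16$, and the identical argument sharpens automatically whenever a better lower bound on $\lambda_1$ is available, which is precisely how the successive constants of Zograf, Abramovich, and Kim--Sarnak enter the story.
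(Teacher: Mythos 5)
The paper does not prove this statement---it simply cites \cite{JeonKimPark06}---and your argument is precisely the standard proof given there and in Abramovich's original paper: the hyperbolic volume $\operatorname{Vol}(X_\Gamma)=\frac{\pi}{3}D_\Gamma$, the Kim--Sarnak spectral bound $\lambda_1\geq\frac{1}{4}-\left(\frac{7}{64}\right)^2=\frac{975}{4096}$, and the Li--Yau conformal volume inequality $\lambda_1\operatorname{Vol}(X_\Gamma)\leq 8\pi d$, with the constant $\frac{2^{15}}{325}$ coming out exactly as you compute. The only point you gloss over is that $X_\Gamma$ is a noncompact finite-volume hyperbolic surface, so one needs the extension of Li--Yau (and the interpretation of $\lambda_1$) to the cusped setting, which is handled in Abramovich's paper and is a standard technicality rather than a gap.
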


\begin{cor}
    The curve $X_0^*(N)$ is not $\C$-tetragonal for $N\geq12906$.
\end{cor}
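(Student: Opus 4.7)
The plan is a direct application of Theorem \ref{kimsarnakbound} with $d=4$. Since $X_0^*(N) = X_0(N)/B(N)$ and the Atkin--Lehner group $B(N)$ has order $2^{\omega(N)}$ (where $\omega(N)$ denotes the number of distinct prime divisors of $N$), the congruence subgroup $\Gamma^*$ corresponding to $X_0^*(N)$ has index
$$D_{\Gamma^*} = \frac{[\SL_2(\Z):\pm\Gamma_0(N)]}{|B(N)|} = \frac{\psi(N)}{2^{\omega(N)}},$$
where $\psi(N) = N\prod_{p\mid N}(1+p^{-1})$ is the Dedekind psi function. Theorem \ref{kimsarnakbound} then says that if $X_0^*(N)$ is $\C$-tetragonal, we must have $\psi(N)/2^{\omega(N)} \leq 2^{17}/325 \approx 403.298$. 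So it suffices to show that for every $N \geq 12906$,
$$\frac{\psi(N)}{2^{\omega(N)}} > \frac{2^{17}}{325}.$$

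Since $\psi(N) \geq N$, it is enough to bound $N/2^{\omega(N)}$ from below, which I would do by a short case analysis on $\omega(N)$. If $\omega(N) \leq 5$, then $N/2^{\omega(N)} \geq N/32 \geq 12906/32 = 403.3125 > 2^{17}/325$; the value $12906$ is in fact the least integer strictly greater than $2^{22}/325 \approx 12905.55$, which is why this threshold is chosen. If $\omega(N) \geq 6$, then $N$ is divisible by at least six distinct primes, hence $N \geq 2\cdot 3 \cdot 5 \cdot 7 \cdot 11 \cdot 13 = 30030$; and a one-line induction (each extra prime $p \geq 3$ multiplies the ratio $N/2^{\omega(N)}$ by at least $3/2$) shows $N/2^{\omega(N)} \geq 30030/64 \approx 469 > 403.3$.

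There is no serious obstacle here: once the index $D_{\Gamma^*}$ is identified with $\psi(N)/2^{\omega(N)}$, the corollary reduces to an elementary comparison between the linear growth of $N$ and the much slower growth of $2^{\omega(N)}$. The only place where one needs to be a touch careful is the borderline case $\omega(N)=5$, where the constant $12906$ is essentially sharp for this trivial estimate.
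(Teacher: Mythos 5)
Your final inequality is exactly the one the paper derives --- $\psi(N)\leq \frac{2^{15}}{325}\cdot 4\cdot 2^{\omega(N)}$, equivalently $\psi(N)/2^{\omega(N)}\leq 2^{17}/325$ --- and your two-case analysis on $\omega(N)$ is sound: the primorial bound $N\geq 2\cdot3\cdot5\cdot7\cdot11\cdot13$ handles $\omega(N)\geq 6$, and $12906$ is indeed the least integer exceeding $2^{22}/325$, which settles $\omega(N)\leq 5$. The gap is in how you obtain the inequality. You apply Theorem \ref{kimsarnakbound} directly to $X_0^*(N)$, assigning it the ``index'' $D_{\Gamma^*}=\psi(N)/2^{\omega(N)}$. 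But that theorem is stated for curves attached to congruence subgroups $\Gamma\subseteq\SL_2(\Z)$, and $X_0^*(N)$ is not such a curve: the Atkin--Lehner involutions $w_d$ are represented by matrices of determinant $d>1$, so the Fuchsian group uniformizing $X_0^*(N)$ lies in the normalizer of $\Gamma_0(N)$ in $\PSL_2(\mathbb{R})$ but is not contained in $\SL_2(\Z)$, and it has no index there in the sense required by the theorem. The Abramovich/Kim--Sarnak bound does extend to Atkin--Lehner quotients, but that extension needs an argument (e.g.\ that the Laplacian eigenvalue bound descends to quotients) which you neither supply nor cite.

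The paper avoids this issue with a one-line device: if $X_0^*(N)$ admitted a degree $4$ map to $\PP^1$, composing with the degree $2^{\omega(N)}$ quotient map $X_0(N)\to X_0^*(N)$ yields a degree $4\cdot2^{\omega(N)}$ map from $X_0(N)$ itself, which genuinely is the curve of the congruence subgroup $\Gamma_0(N)$ of index $\psi(N)$. Applying Theorem \ref{kimsarnakbound} with $d=4\cdot2^{\omega(N)}$ then gives precisely your inequality, legitimately. With that single repair your argument coincides with the paper's; as written, the key step is not justified by the cited theorem.
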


\begin{proof}
    If the curve $X_0^*(N)$ is tetragonal. then we have a composition map of degree $4\cdot2^{\omega(N)}$ ($\omega(N)$ is the number of different prime divisors of $N$) \[X_0(N)\xrightarrow{2^{\omega_N}}X_0^*(N)\xrightarrow{4}\PP^1.\] Since the index of $\Gamma_0(N)$ in $\SL_2(\Z)$ is equal to $\psi(N)=N\prod_{q\mid N}(1+\frac{1}{p})\geq N$, we have the following inequality: \[N\leq\psi(N)\leq\frac{2^{15}}{325}\cdot4\cdot2^{\omega(N)}.\] We now have $2$ cases.
    \begin{itemize}
        \item $\omega(N)\geq 6$: In this case we have \[\psi(N)\geq\prod_{q\mid N}(q+1)\geq 3\cdot4\cdot6\cdot8\cdot12\cdot14\cdot18^{\omega(N)-6}>\frac{2^{15}}{325}\cdot4\cdot2^{\omega(N)}.\] The last inequality can be easily checked by hand. Thus in this case we have a contradiction.
        \item $\omega(N)\leq5$: In this case we have \[N\leq\psi(N)\leq\frac{2^{15}}{325}\cdot4\cdot2^5<12906.\]
    \end{itemize}
\end{proof}

We are now left with only finitely many levels $N$ to consider. We can further reduce the number of possible levels such that $X_0^*(N)$ is $\Q$-tetragonal with Ogg's inequality \cite[Theorem 3.1]{Ogg74}, stated in simpler form in \cite[Lemma 3.1]{HasegawaShimura_trig}.

\begin{lem}[Ogg]\label{lemmaogg}
    Let $p$ be a prime not dividing $N$. Then the number of $\F_{p^2}$ points on $X_0(N)$ is at least
    $$L_p(N):=\frac{p-1}{12}\psi(N)+2^{\omega(N)}.$$
    Here $\psi(N)=N\prod_{q\mid N}(1+\frac{1}{q})$ is the index of the congruence subgroup $\Gamma_0(N)$, and $\omega(N)$ is the number of different prime divisors of $N$.
\end{lem}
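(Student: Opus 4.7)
The strategy is to exhibit at least $L_p(N)$ distinct $\F_{p^2}$-rational points on the reduction of $X_0(N)$ modulo $p$, split into two visibly disjoint classes: the $\Q$-rational cusps and the supersingular moduli. Since $p\nmid N$, $X_0(N)$ has good reduction at $p$ by Deligne--Rapoport, so $\F_{p^2}$-points of the smooth reduction $X_0(N)_{\F_p}$ lift uniquely to $\F_{p^2}$-points of $X_0(N)$ and it is enough to count on the special fibre.

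For the first class, I would recall the description of the cusps of $X_0(N)$ via divisors $d\mid N$: the cusp attached to $d$ is defined over the maximal real subfield of $\Q(\zeta_{\gcd(d,N/d)})$, so when $(d,N/d)=1$ it is $\Q$-rational. This yields exactly $2^{\omega(N)}$ rational cusps, whose reductions are pairwise distinct $\F_p$-rational smooth points. For the second class, a non-cuspidal $\bar{\F}_p$-point of $X_0(N)$ is a pair $(E,C)$ with $E/\bar{\F}_p$ elliptic and $C\subset E$ cyclic of order $N$; it is supersingular when $E$ is. A theorem of Deuring shows every supersingular $E$ has a model over $\F_{p^2}$ with $\mathrm{Frob}^2=[-p]$, so every cyclic subgroup $C$ is $\mathrm{Frob}^2$-stable and $(E,C)$ is automatically $\F_{p^2}$-rational. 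The supersingular locus is clearly disjoint from the cuspidal locus.

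To bound the supersingular contribution from below I would use the forgetful degree-$\psi(N)$ map $\pi:X_0(N)\to X(1)$. The Eichler--Deuring mass formula gives $\sum_{E\text{ ss}}1/|\Aut(E)|=(p-1)/24$ over $\bar{\F}_p$, so the number of supersingular $j$-invariants in $\F_{p^2}$ is at least $(p-1)/12$. Pulling back along $\pi$ and using multiplicativity of degree, the supersingular locus of $X_0(N)_{\F_p}$ contains at least $\tfrac{p-1}{12}\psi(N)$ distinct points. Adding the $2^{\omega(N)}$ rational cusps yields the advertised lower bound $L_p(N)$.

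The main subtlety in executing this plan is the bookkeeping at $j=0$ and $j=1728$: the mass formula is a strict equality only under the harmonic weighting by $1/|\Aut(E)|$, and $\pi$ may ramify above these $j$-invariants. The hard part is therefore verifying that dropping the weights and accounting for the behaviour of $\pi$ at these exceptional points can only \emph{inflate} the raw count, so that the asserted inequality holds uniformly. Classically this reduces to the observation that extra automorphisms produce additional distinct enhanced moduli on $X_0(N)$ rather than forcing identifications, which is precisely why the statement gives $\geq L_p(N)$ rather than an equality.
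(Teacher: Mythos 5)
The paper does not prove this lemma at all: it is quoted directly from Ogg \cite{Ogg74} (restated in \cite{HasegawaShimura_trig}), so there is no in-paper argument to compare against. Your proposal reconstructs Ogg's original proof, and the overall architecture is right: good reduction at $p\nmid N$ by Deligne--Rapoport, the $2^{\omega(N)}$ cusps attached to divisors $d\mid N$ with $(d,N/d)=1$ reducing to distinct rational points, Deuring's theorem that every supersingular curve has an $\F_{p^2}$-model on which $\mathrm{Frob}^2=[-p]$ (so that every cyclic subgroup, hence every enhanced pair $(E,C)$, is $\F_{p^2}$-rational), and the Eichler--Deuring mass formula to count the supersingular locus. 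These two classes are disjoint, and adding them gives $L_p(N)$.

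The one step you have not actually carried out is the passage from the mass formula to the unweighted bound $\frac{p-1}{12}\psi(N)$, and your proposed route (``at least $(p-1)/12$ supersingular $j$-invariants, then multiply by $\deg\pi=\psi(N)$'') does not work as stated: the fibre of $\pi$ over a supersingular $j$ has \emph{at most} $\psi(N)$ points, and strictly fewer over $j=0$ and $j=1728$, so multiplicativity of degree gives an upper bound on each fibre, not a lower one. Your closing remark that extra automorphisms ``produce additional distinct enhanced moduli rather than forcing identifications'' is also not the right mechanism --- they do force identifications. The correct fix is to run the weighted count on $X_0(N)$ itself: by orbit--stabilizer, the fibre of $\pi$ over a supersingular $E$ consists of the $\Aut(E)$-orbits on the $\psi(N)$ cyclic subgroups of order $N$, and since $-1\in\Aut(E,C)$ for every $C$, each orbit has size at most $|\Aut(E)|/2$, so the fibre has at least $2\psi(N)/|\Aut(E)|$ points. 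Summing over supersingular $E$ and applying $\sum_E 1/|\Aut(E)|=(p-1)/24$ gives at least $\frac{p-1}{12}\psi(N)$ supersingular points, which is exactly the factor of two your sketch needs. With that substitution your argument is complete and agrees with Ogg's.
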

\begin{lem}\cite[Lemma 3.5]{NajmanOrlic22}\label{Fp2points}
    Let $C/\Q$ be a curve, $p$ a prime of good reduction for $C$, and $q$ a power of $p$. Suppose $\#C(\F_q)>d(q+1)$. Then $\textup{gon}_\Q(C)>d$.
\end{lem}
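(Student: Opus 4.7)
The plan is to argue by contradiction via specialization of a low-degree morphism, followed by a simple fiber counting argument over $\F_q$. Suppose $\textup{gon}_\Q(C)\leq d$, so that there is a non-constant $\Q$-rational morphism $f\colon C\to\PP^1$ of degree at most $d$.

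I would first use the good reduction hypothesis at $p$ to produce a companion morphism over $\F_p$ of the same degree. Since $p$ is a prime of good reduction, $C$ admits a smooth proper model $\mathcal{C}$ over $\Z_{(p)}$ with generic fiber $C$ and special fiber $\bar{C}$, and the morphism $f$ extends by the valuative criterion of properness to a morphism $\mathcal{C}\to\PP^1_{\Z_{(p)}}$. This extension is finite (being quasi-finite and proper) and hence flat by ``miracle flatness,'' since both source and target are regular of the same dimension. Flatness forces the degrees of the generic and special fibers to coincide, so the reduction $\bar{f}\colon\bar{C}\to\PP^1_{\F_p}$ is a non-constant morphism of degree at most $d$.

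Next I would base change to $\F_q$ and carry out the counting. Every $\F_q$-rational point of $\bar{C}$ maps under $\bar{f}_{\F_q}$ to an $\F_q$-rational point of $\PP^1$, and the fiber over any such point has at most $\deg\bar{f}_{\F_q}\leq d$ geometric points. Therefore
\[
\#C(\F_q)=\#\bar{C}(\F_q)\leq d\cdot\#\PP^1(\F_q)=d(q+1),
\]
which contradicts the hypothesis $\#C(\F_q)>d(q+1)$, completing the proof.

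The only delicate point is the preservation of the degree of $f$ under specialization, which I have sketched via flatness above; equivalently one may cite the classical principle that $\textup{gon}_{\F_p}(\bar{C})\leq \textup{gon}_\Q(C)$ at a prime of good reduction, which rests on the very same flatness argument. Everything else is elementary, so this technical point is the main obstacle and should be handled carefully.
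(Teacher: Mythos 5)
Your overall strategy --- reduce a putative degree-$\leq d$ gonal map modulo $p$ and then count fibers over $\PP^1(\F_q)$ --- is the standard one (the lemma is only cited in this paper, and the proof in the cited source has exactly this shape), and your counting step is correct. The gap is in the reduction step. The valuative criterion of properness only extends $f$ over the codimension-one points of the two-dimensional model $\mathcal{C}$ (i.e., over traits); it does not extend a rational map $\mathcal{C}\dashrightarrow\PP^1_{\Z_{(p)}}$ over the finitely many closed points of the special fiber where it is undefined, and such points of indeterminacy genuinely occur. Already for $\mathcal{C}=\PP^1_{\Z_{(p)}}$ and $f=[x:y]\mapsto[x:py]$, the closure of the graph is $V(xv-pyu)\subset\PP^1\times\PP^1$, whose special fiber is $\{x=0\}\cup\{v=0\}$: the map does not extend at the point $[0:1]$ of the special fiber, and the induced map on the special fiber is \emph{constant}. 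So one cannot assert that $f$ extends to a finite morphism $\mathcal{C}\to\PP^1_{\Z_{(p)}}$; the subsequent appeal to miracle flatness and equality of degrees, and even the non-constancy of $\bar f$ (which your fiber count genuinely needs), are unjustified. In general the reduction of a morphism can acquire base points, drop degree, or degenerate to a constant.

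The fact you actually need, $\gon_{\F_p}(\bar C)\leq\gon_\Q(C)$, is true, but it does not rest on ``the very same flatness argument'': one specializes the \emph{linear system} rather than the morphism. Take $D=\mathrm{div}_\infty(f)$, an effective divisor of degree $\leq d$ with $\ell(D)\geq 2$; its Zariski closure in $\mathcal{C}$ is flat over $\Z_{(p)}$, hence restricts to an effective divisor $\bar D$ on $\bar C$ of the same degree, and upper semicontinuity of $h^0$ (cohomology and base change) gives $\ell(\bar D)\geq\ell(D)\geq 2$. Any non-constant function in $L(\bar D)$ is then a degree-$\leq d$ map $\bar C\to\PP^1$ over $\F_p$, and your count $\#\bar C(\F_q)\leq d(q+1)$ finishes the proof. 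With the reduction step repaired in this way --- or by simply citing the inequality $\gon_{\F_p}(C)\leq\gon_\Q(C)$ recorded at the start of Section 4 of the paper, rather than re-deriving it --- your argument is complete.
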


If $N$ is a prime power, then we have $X_0^+(N)=X_0^*(N)$. Since all tetragonal curves $X_0^+(N)$ were determined in \cite{Orlic2023}, we can skip the proof for these levels. From now on we will therefore suppose that $N$ is not a prime power.

We may also eliminate all levels $N$ such that $g(X_0^*(N))\leq3$ because such curves have $\Q$-gonality at most $3$. The curve $X_0^*(N)$ is of genus $4$ if and only if \begin{align*}
    N\in\{&148,160,172,200,201,202,214,219,224,225,228,242,247,253,254,259,260,261,\\
&262,264,267,273,275,280,300,302,305,306,308,319,321,322,323,334,335,341,\\
&342,345,350,351,354,355,366,370,374,385,395,399,426,434,483,546,555,570\}.
\end{align*}

After eliminating all hyperelliptic and trigonal levels $N$ with $g\geq5$ (these are $N\in\{253,327,302,323,351,555\}$, other levels are prime powers - \cite{Hasegawa1997, HasegawaShimura2000}), as well as levels $N$ that do not satisfy Lemmas \ref{lemmaogg} and \ref{Fp2points} (with the least $p\nmid N$), there are $553$ possible curves $X_0^*(N)$ of $\Q$-gonality $4$ with genus $g\geq 5$. The largest levels out of them are \[6930,6510,6090,5460,4830,4620,4290,3990,3570,3360,3150,2940,2730,2560,\ldots\]

Also, we can eliminate levels $N\in\{2520,2940,3150,3360,4620,4830,5460,6090,6510,6930\}$ with Abramovich's bound (Theorem \ref{kimsarnakbound}). We are left with $543$ levels $N$ we need to check. The largest of them are \[3990,3570,2730,2580,\ldots\] We will solve all of them (except $N=378$) in the following sections. A complete list of these levels can be found on Github.

\section{Curves of genus $4$ and $6$}\label{genus46section}

In this section we determine the $\Q$ and $\C$-gonality of all curves $X_0^*(N)$ of genus $4$ and some curves of genus $6$. This is done with \texttt{Magma} functions Genus4GonalMap() and Genus6GonalMap(). We deal with the rest of genus $6$ curves in \Cref{morphism_section} where we give an explicit map to $\PP^1$ instead of just proving its existence.

\begin{prop}\label{genus4prop}
    The curve $X_0^*(N)$ has $\Q$-gonality equal to $3$ for 
    \[N\in\{148,172,201,202,214,219,242,253,254,260,262,302,305,319,323,345,351,395,434,555\}.\]

    The curve $X_0^*(N)$ has $\C$-gonality equal to $3$ and $\Q$-gonality equal to $4$ for \begin{align*}
        N\in\{&160,200,224,225,228,247,259,261,264,267,273,275,280,300,306,308,321,\\
        &322,334,335,341,342,350,354,355,356,370,374,385,399,426,483,546,570\}.
    \end{align*}
\end{prop}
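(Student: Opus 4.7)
Since $g(X_0^*(N))=4$ for each $N$ in the two lists, Proposition~\ref{poonen}(v) gives $\textup{gon}_\C(X_0^*(N))\leq 3$, while Proposition~\ref{poonen}(iv) applied to the rational cusp gives $\textup{gon}_\Q(X_0^*(N))\leq 4$. None of the $54$ listed levels appears in Furumoto--Hasegawa's classification of hyperelliptic $X_0^*(N)$, so each such curve is non-hyperelliptic, and hence $\textup{gon}_\C=3$ on the nose. The remaining task is to decide, level by level, whether $\textup{gon}_\Q$ equals $3$ or $4$.

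The standard tool is the canonical embedding. A non-hyperelliptic curve $X$ of genus $4$ embeds in $\PP^3$ as a degree-$6$ complete intersection of a unique quadric $Q$ and a cubic $F$, and the trigonal pencils on $X$ are cut out by the rulings of $Q$. If $Q$ has rank $3$ (a quadric cone), it carries a single ruling and yields a unique trigonal pencil, automatically defined over $\Q$. If $Q$ has rank $4$, it carries two rulings, which are either both defined over $\Q$ (when $Q\cong\PP^1\times\PP^1$ over $\Q$) or form a conjugate pair over a quadratic extension. Consequently $\textup{gon}_\Q(X)=3$ iff $Q$ is either a quadric cone or a smooth $\Q$-split quadric, and $\textup{gon}_\Q(X)=4$ otherwise.

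The plan is therefore, for each $N$ in the genus-$4$ list: (i) construct the canonical model of $X_0^*(N)\subset\PP^3$ from the space of $B(N)$-invariant weight-$2$ cusp forms via Magma's X0Nstar() routine; (ii) extract the unique quadric $Q$ in the defining ideal of the canonical curve; (iii) compute the rank of $Q$; and (iv) in the rank-$4$ case, test whether $Q$ splits as $\PP^1\times\PP^1$ over $\Q$, which reduces to a Hasse--Minkowski-type isotropy check on an associated rational quadratic form. These four steps are precisely what the Magma intrinsic Genus4GonalMap() performs: it either returns an explicit $g^1_3$ over $\Q$ (giving $\textup{gon}_\Q=3$) or certifies that no such pencil exists (giving $\textup{gon}_\Q=4$). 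Running this intrinsic on each of the $54$ levels sorts them into the two lists in the statement.

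The main obstacle is computational rather than conceptual. For the larger levels in the list one must be careful to produce a canonical model of $X_0^*(N)$ that is genuinely defined over $\Q$ from the $B(N)$-invariant cusp forms on $\Gamma_0(N)$, since a suboptimal choice of basis can conceal the rational splitting of $Q$; once the model is in place, the rank test and the splitting test are both inexpensive, and the classification into the $20+34$ levels above is unambiguous.
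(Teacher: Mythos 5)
Your proposal is correct and follows essentially the same route as the paper, which likewise reduces everything to running \texttt{Magma}'s Genus4GonalMap() on the canonical model of each genus-$4$ curve $X_0^*(N)$ and checking whether the returned trigonal pencil is defined over $\Q$. The only difference is that you spell out the underlying quadric-ruling theory (the rank of the unique quadric containing the canonical curve and the rationality of its rulings) that the paper leaves implicit in its citation of the \texttt{Magma} documentation.
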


\begin{proof}
    All these curves are of genus $4$. We use the \texttt{Magma} function Genus4GonalMap() to obtain a degree $3$ morphism to $\PP^1$ (it exists due to Proposition \ref{poonen} (v)). If the curve is $\Q$-trigonal, this morphism will be defined over $\Q$. Otherwise, the function gives a morphism over a quadratic or a biquadratic field \cite{magma} (even though the trigonal map can always be defined over a number field of degree $\leq2$).
\end{proof}

\begin{prop}\label{genus6prop}
    The curve $X_0^*(N)$ has $\Q$-gonality equal to $4$ for the following $12$ levels \[N\in\{265,274,297,301,369,375,447,470,564,598,627,670\}.\] The curve $X_0^*(N)$ has $\C$-gonality equal to $4$ and $\Q$-gonality greater than $4$ for the following $14$ levels \[N\in\{291,314,325,327,338,398,451,506,561,590,609,615,690,858\}.\]
\end{prop}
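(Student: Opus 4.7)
The plan mirrors the proof of Proposition \ref{genus4prop}. First I would verify that each of the $26$ listed levels $N$ gives a curve $X_0^*(N)$ of genus exactly $6$, which is a routine computation using Atkin--Lehner-invariant cusp forms. By Proposition \ref{poonen}(v), every genus $6$ curve satisfies $\textup{gon}_\C \leq \lfloor (6+3)/2 \rfloor = 4$. The classification of hyperelliptic quotients by Furumoto--Hasegawa and of trigonal quotients by Hasegawa--Shimura shows that none of these $26$ values of $N$ yield a hyperelliptic or trigonal $X_0^*(N)$, so we obtain $\textup{gon}_\C X_0^*(N) = 4$ in every case.

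For each curve I would then invoke \texttt{Magma}'s Genus6GonalMap() on a smooth projective model, which computes a degree $4$ morphism to $\PP^1$ defined over the smallest possible field. For each of the $12$ levels in the first list the output is a $\Q$-rational morphism; combined with the lower bound $\textup{gon}_\Q \geq \textup{gon}_\C = 4$ from Proposition \ref{poonen}(i), this forces $\textup{gon}_\Q(X_0^*(N)) = 4$. For each of the $14$ levels in the second list, Genus6GonalMap() returns a degree $4$ map defined only over a proper algebraic extension of $\Q$, which would establish $\textup{gon}_\Q(X_0^*(N)) > 4$.

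The subtle point, and the main obstacle, is the second claim: to conclude no $\Q$-rational $g^1_4$ exists, one must be certain that \texttt{Magma}'s procedure effectively exhausts \emph{all} degree $4$ pencils, not merely exhibits one. By the Brill--Noether analysis of non-trigonal genus $6$ curves, the variety $W^1_4$ is finite unless the curve is bielliptic (in which case the family of $g^1_4$'s is parametrized by an elliptic quotient). I would therefore first cross-check the $14$ levels against the classification of bielliptic curves $X_0^*(N)$ due to Bars--Gonz\'alez to exclude the bielliptic case; once $W^1_4$ is known to be a finite scheme, analyzing the Galois action on its points (which is what the \texttt{Magma} function does internally) yields a rigorous certificate for the non-existence of a $\Q$-rational degree $4$ map, completing the argument.
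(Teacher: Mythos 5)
Your proposal follows essentially the same route as the paper: verify genus $6$, get $\gon_\C=4$ from Proposition \ref{poonen}(v) together with the hyperelliptic/trigonal classifications, and read off the field of definition of the gonal map from \texttt{Magma}'s Genus6GonalMap(), with the negative direction for the $14$ levels resting on the finiteness of $W^1_4$ and the Galois action on its points. You are also right to identify the finiteness of $W^1_4$ as the crux of the second claim.

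There is, however, one inaccuracy in your Brill--Noether dichotomy that would need to be patched: for a non-trigonal curve of genus $6$, the locus $W^1_4$ is positive-dimensional not only in the bielliptic case but also when the curve is a smooth plane quintic (the $g^1_4$'s then arise by projection from points of the curve, giving a one-dimensional family). So excluding biellipticity via the Bars--Gonz\'alez classification is not enough; you must also rule out the plane quintic case for each of the $14$ levels before the ``finitely many gonal pencils, hence check the minimal field of definition'' argument is valid. The paper handles both exclusions at once by reading off from the output of Genus6GonalMap() that each curve is neither bielliptic nor isomorphic to a smooth plane quintic (citing \cite{HARRISON20133} for the resulting finiteness). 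With that extra exclusion added, your argument matches the paper's.
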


\begin{proof}
    All these curves are of genus $6$. We use the \texttt{Magma} function Genus6GonalMap() to obtain a degree $4$ morphism to $\PP^1$ (it exists due to Proposition \ref{poonen} (v)). This morphism is defined over $\Q$ for all levels $N$ in the first set.

    For all levels $N$ in the second set the function Genus6GonalMap() tells us that $X_0^*(N)$ is neither bielliptic nor isomorphic to a smooth plane quintic. Hence it has only finitely many gonal maps up to equivalence \cite{HARRISON20133} and the function gives us the map defined over a minimal degree number field \cite{magma}. For all these levels $N$ that degree $4$ map is not defined over $\Q$, hence these curves are not $\Q$-tetragonal. However, they are $\C$-tetragonal due to Proposition \ref{poonen} (v) (we know that they are not trigonal from \cite{HasegawaShimura2000}).
\end{proof}


We will construct degree $4$ rational morphisms from genus $5$ curves in a different way, by finding degree $4$ effective divisors with Riemann-Roch dimension $2$. This will be discussed further in \Cref{morphism_section}.

\section{$\F_p$-gonality}\label{Fpsection}

If $C/\Q$ is a curve and $p$ a prime of good reduction for $C$, then we have an inequality \[\textup{gon}_{\F_p}(C)\leq\textup{gon}_\Q(C).\]
$\F_p$ gonality is a powerful tool for giving a lower bound on the $\Q$-gonality. The reason for that is that there are only finitely many $\F_{p^n}$ points on $\C$ for any $n\in\N$. Hence we can compute Riemann-Roch spaces of all degree $d$ $\F_p$-rational divisors $D\geq0$.

This section solves the majority of levels listed in \Cref{preliminaries_section}, $288$ levels $N$.

\begin{prop}\label{Fp_gonality}
The $\F_p$-gonality of $X_0^*(N)$ is at least $5$ for the following $153$ values of $N$:

\begin{center}
\begin{longtable}{|c|c||c|c||c|c||c|c||c|c||c|c||c|c||c|c|}
\hline
$N$ & $p$ & $N$ & $p$ & $N$ & $p$ & $N$ & $p$ & $N$ & $p$ & $N$ & $p$ & $N$ & $p$ & $N$ & $p$\\
    \hline

$388$ & $3$ & $394$ & $3$ & $400$ & $3$ & $417$ & $5$ & $422$ & $3$ & $424$ & $3$ & $432$ & $5$ & $436$ & $3$\\
\hline
$452$ & $3$ & $453$ & $2$ & $458$ & $3$ & $464$ & $3$ & $466$ & $3$ & $482$ & $3$ & $485$ & $3$ & $486$ & $5$\\
\hline
$488$ & $3$ & $489$ & $2$ & $493$ & $2$ & $500$ & $3$ & $507$ & $2$ & $513$ & $2$ & $515$ & $2$ & $517$ & $2$\\
\hline
$533$ & $2$ & $535$ & $2$ & $536$ & $3$ & $543$ & $2$ & $553$ & $2$ & $575$ & $2$ & $579$ & $2$ & $583$ & $2$\\
\hline
$589$ & $2$ & $600$ & $7$ & $612$ & $5$ & $616$ & $3$ & $621$ & $2$ & $624$ & $5$ & $629$ & $2$ & $666$ & $5$\\
\hline
$672$ & $5$ & $678$ & $5$ & $680$ & $3$ & $684$ & $5$ & $696$ & $5$ & $697$ & $2$ & $702$ & $5$ & $708$ & $5$\\
\hline
$720$ & $7$ & $726$ & $5$ & $728$ & $5$ & $730$ & $3$ & $732$ & $5$ & $738$ & $5$ & $740$ & $3$ & $741$ & $2$\\
\hline
$742$ & $3$ & $744$ & $5$ & $748$ & $3$ & $750$ & $7$ & $754$ & $3$ & $756$ & $5$ & $760$ & $3$ & $762$ & $5$\\
\hline
$765$ & $2$ & $774$ & $5$ & $786$ & $5$ & $792$ & $5$ & $795$ & $2$ & $804$ & $5$ & $806$ & $3$ & $810$ & $7$\\
\hline
$812$ & $3$ & $816$ & $5$ & $819$ & $5$ & $822$ & $5$ & $826$ & $5$ & $828$ & $5$ & $834$ & $5$ & $836$ & $3$\\
\hline
$840$ & $11$ & $846$ & $5$ & $852$ & $3$ & $854$ & $3$ & $874$ & $3$ & $876$ & $5$ & $882$ & $5$ & $884$ & $3$\\
\hline
$888$ & $5$ & $890$ & $3$ & $894$ & $5$ & $903$ & $2$ & $906$ & $5$ & $912$ & $5$ & $918$ & $5$ & $938$ & $3$\\
\hline
$942$ & $5$ & $954$ & $5$ & $957$ & $2$ & $960$ & $7$ & $962$ & $5$ & $969$ & $2$ & $978$ & $5$ & $984$ & $5$\\
\hline
$986$ & $3$ & $1038$ & $5$ & $1050$ & $11$ & $1062$ & $5$ & $1080$ & $7$ & $1092$ & $5$ & $1116$ & $5$ & $1170$ & $7$\\
\hline
$1218$ & $5$ & $1230$ & $7$ & $1254$ & $5$ & $1260$ & $11$ & $1290$ & $7$ & $1302$ & $5$ & $1320$ & $7$ & $1326$ & $5$\\
\hline
$1330$ & $11$ & $1386$ & $5$ & $1410$ & $7$ & $1428$ & $5$ & $1430$ & $7$ & $1470$ & $11$ & $1482$ & $5$ & $1518$ & $5$\\
\hline
$1530$ & $7$ & $1554$ & $5$ & $1560$ & $7$ & $1590$ & $7$ & $1596$ & $5$ & $1638$ & $5$ & $1650$ & $7$ & $1680$ & $11$\\
\hline
$1710$ & $7$ & $1770$ & $7$ & $1806$ & $5$ & $1830$ & $7$ & $1860$ & $7$ & $1890$ & $11$ & $1914$ & $5$ & $1938$ & $5$\\
\hline
$1950$ & $7$ & $1980$ & $7$ & $2010$ & $7$ & $2070$ & $7$ & $2130$ & $7$ & $2310$ & $13$ & $2730$ & $11$ & $3570$ & $11$\\
\hline
$3990$ & $11$ & & & & & & & & & & & & & &\\
 
    \hline
\end{longtable}
\end{center}
\end{prop}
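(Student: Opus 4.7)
My approach is to reduce each entry of the table to a finite computational check on the reduction of $X_0^*(N)$ modulo $p$. Since $p \nmid N$ holds for every pair $(N,p)$ in the table, the curve $X_0(N)$ (and hence its quotient $X_0^*(N)$) has good reduction at $p$, the genus is preserved, and the $\F_p$-gonality of the reduction is a lower bound for the $\Q$-gonality. A degree $\le 4$ morphism $X_0^*(N)_{\F_p} \to \PP^1_{\F_p}$ exists if and only if there is an effective $\F_p$-rational divisor $D \geq 0$ of degree $\le 4$ with Riemann--Roch dimension $\ell(D) \geq 2$; since every such $D$ is a sum of closed points of degrees $1,2,3,4$, it is determined by a multiset of Galois orbits of $\F_{p^n}$-points for $n \le 4$, and this is a finite set.

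For each pair $(N,p)$ I would first obtain a model of $X_0^*(N)$ over $\Q$ using the \texttt{Magma} function \texttt{X0Nstar()} and reduce it modulo $p$. Then two techniques are applied in sequence. The cheap one is point counting: if $\#X_0^*(N)(\F_{p^n}) > 4(p^n+1)$ for some small $n$, then any degree $4$ map to $\PP^1$ would yield too few preimages, and the $\F_p$-gonality already exceeds $4$; this is the analogue of \Cref{Fp2points} applied directly on the reduction. For residual pairs where point counting is insufficient, I would enumerate all effective $\F_p$-rational divisors $D$ of degree $4$ as sums of closed points of total degree $4$, and verify with \texttt{Magma}'s \texttt{RiemannRochSpace} routine that $\ell(D) \leq 1$ for every such $D$. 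If no degree $4$ divisor attains $\ell(D) \geq 2$, then no degree $\le 4$ morphism to $\PP^1$ can exist over $\F_p$, yielding the desired bound $\textup{gon}_{\F_p}(X_0^*(N)) \geq 5$.

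The hard part is computational scale rather than mathematical substance. For the larger levels (e.g.\ $N = 3990$ with $p = 11$, or $N = 2730$ with $p = 11$), the reduction has up to $p^4$-many $\F_{p^4}$-points, and the possible decompositions of $4$ into closed-point degrees can yield a very large number of candidate divisors, each requiring a Riemann--Roch computation on a projective model whose genus may be non-trivial. To keep runtimes manageable, the prime $p$ listed in each row is chosen as small as possible subject to $p \nmid N$, and the point-counting shortcut should always be attempted first so that the exhaustive divisor enumeration is invoked only on those $(N,p)$ where no small $n$ supplies enough rational points to rule out a degree $4$ map directly. The scripts carrying out these checks are those in the GitHub repository cited in the introduction.
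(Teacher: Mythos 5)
Your overall strategy is the same as the paper's: for each pair $(N,p)$ one reduces a model of $X_0^*(N)$ modulo $p$ and certifies $\textup{gon}_{\F_p}\geq 5$ by showing that no effective $\F_p$-rational divisor $D$ of degree $4$ has $\ell(D)\geq 2$, which is a finite check over multisets of closed points of total degree $4$. That part is mathematically sound. The one substantive difference is that your fallback is a brute-force enumeration over \emph{all} decompositions $1+1+1+1$, $1+1+2$, $1+3$, $2+2$, $4$, whereas the paper's proof hinges on \cite[Lemma 3.1]{NajmanOrlic22}: if a degree-$4$ function $f$ exists over $\F_p$ and $\#X_0^*(N)(\F_p)>d(p+1)$, then replacing $f$ by $1/(f-c)$ for a suitable $c$ produces a degree-$4$ function whose polar divisor contains more than $d$ rational points. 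This restricts the search to divisors of the form $1+1+1+1$ and $1+1+2$ when $\#X_0^*(N)(\F_p)>p+1$, and to $1+1+1+1$ alone when $\#X_0^*(N)(\F_p)>2(p+1)$, eliminating the closed points of degree $3$ and $4$ that dominate the cost. Without this reduction the computation is almost certainly infeasible at the required scale: the paper reports that $N=3990$ ($g=23$, $p=11$) already took $2.5$ days and $N=3570$ took $7$ days \emph{with} the reduction in place, and a genus-$23$ curve over $\F_{11}$ has on the order of $11^4$ points of degree dividing $4$, so the unrestricted enumeration would be orders of magnitude larger.

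Also note that your ``cheap'' first pass --- checking whether $\#X_0^*(N)(\F_{q})>4(q+1)$ for some $q=p^n$ --- is exactly the criterion of Lemma \ref{Fp2points}, and the paper applies it separately in Proposition \ref{prop_counting_Fpn_points} to a \emph{different} list of levels; the $153$ levels in the present proposition are precisely those for which that shortcut does not succeed with a convenient prime, so for this table your method would fall through to the exhaustive enumeration in essentially every case. So the proposal is correct in principle but is missing the key lemma that makes the proof executable; you should incorporate the rational-point pigeonhole reduction before claiming the computation can be carried out.
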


\begin{proof}
    Using \texttt{Magma}, we compute that there are no functions of degree $\leq4$ in $\F_p(X_0^*(N))$. We are able to reduce the number of degree $4$ divisors that need to be checked by noting the following: If there exists a function $f$ over $\F_p$ of a certain degree and if $\#X_0^*(N)(\F_p)>d(p+1)$, then there exists $c\in \F_p$ such that the function $g(x):=\frac{1}{f(x)-c}$ has the same degree and its polar divisor contains at least $d$ $\F_p$-rational points, see \cite[Lemma 3.1]{NajmanOrlic22}.

    Therefore, if $\#X_0^*(N)(\F_p)>p+1$, it is enough to check divisors of the form $1+1+1+1$ and $1+1+2$, and if $\#X_0^*(N)(\F_p)>2(p+1)$, it is enough to just check divisors of the form $1+1+1+1$.
\end{proof}

The computations for some higher levels $N$ took several days to finish. For example, it took $2.5$ days for $N=3990$ and $7$ days for $N=3570$. The reason for that is that these curves are of high genus ($g(X_0^*(3990))=23, g(X_0^*(3570))=19$) and there are many $\F_{11}$-rational divisors of degree $4$ that must be checked. 

It also takes some time to find the model of $X_0^*(N)$ for higher levels $N$. It took $4$ hours to find the model of $X_0^*(3990)$ with the function X0Nstar(). Other methods were unable to even find a model in a reasonable time.

We can also get a lower bound on the $\Q$-gonality with Lemma \ref{Fp2points} by counting $\F_{p^n}$ points on the curve without actually computing the $\F_p$-gonality.

\begin{prop}\label{prop_counting_Fpn_points}
    The $\F_p$-gonality of $X_0^*(N)$ is at least $5$ for the following $135$ values of $N$:

\begin{center}
\begin{longtable}{|c|c|c||c|c|c||c|c|c|}
\hline
$N$ & $q$ & $\#X_0^*(N)(\F_q)$ & $N$ & $q$ & $\#X_0^*(N)(\F_q)$ & $N$ & $q$ & $\#X_0^*(N)(\F_q)$ \\
    \hline
$448$ & $9$ & $42$ & $472$ & $9$ & $42$ & $477$ & $4$ & $21$\\
\hline
$484$ & $9$ & $43$ & $496$ & $9$ & $44$ & $508$ & $9$ & $46$\\
\hline
$514$ & $9$ & $47$ & $538$ & $9$ & $41$ & $544$ & $9$ & $46$\\
\hline
$548$ & $9$ & $42$ & $549$ & $4$ & $21$ & $554$ & $3$ & $18$\\
\hline
$556$ & $9$ & $51$ & $562$ & $9$ & $47$ & $566$ & $9$ & $49$\\
\hline
$567$ & $4$ & $21$ & $568$ & $9$ & $44$ & $576$ & $25$ & $120$\\
\hline
$578$ & $9$ & $45$ & $584$ & $9$ & $42$ & $586$ & $9$ & $47$\\
\hline
$592$ & $9$ & $44$ & $597$ & $4$ & $23$ & $603$ & $4$ & $22$\\
\hline
$604$ & $9$ & $54$ & $605$ & $4$ & $22$ & $614$ & $9$ & $50$\\
\hline
$633$ & $4$ & $25$ & $635$ & $4$ & $22$ & $637$ & $4$ & $21$\\
\hline
$639$ & $4$ & $22$ & $649$ & $4$ & $21$ & $657$ & $4$ & $23$\\
\hline
$667$ & $9$ & $43$ & $669$ & $4$ & $22$ & $679$ & $9$ & $48$\\
\hline
$681$ & $4$ & $28$ & $685$ & $4$ & $21$ & $700$ & $9$ & $44$\\
\hline
$703$ & $9$ & $43$ & $707$ & $4$ & $23$ & $713$ & $4$ & $22$\\
\hline
$721$ & $4$ & $21$ & $725$ & $4$ & $21$ & $731$ & $4$ & $21$\\
\hline
$737$ & $4$ & $22$ & $745$ & $4$ & $26$ & $749$ & $4$ & $26$\\
\hline
$755$ & $4$ & $24$ & $763$ & $4$ & $24$ & $767$ & $4$ & $21$\\
\hline
$779$ & $4$ & $23$ & $781$ & $4$ & $25$ & $791$ & $4$ & $23$\\
\hline
$793$ & $4$ & $25$ & $799$ & $4$ & $21$ & $803$ & $4$ & $25$\\
\hline
$817$ & $4$ & $22$ & $820$ & $9$ & $42$ & $825$ & $4$ & $23$\\
\hline
$830$ & $9$ & $45$ & $850$ & $9$ & $41$ & $851$ & $4$ & $24$\\
\hline
$868$ & $9$ & $42$ & $880$ & $9$ & $45$ & $885$ & $4$ & $21$\\
\hline
$902$ & $9$ & $42$ & $915$ & $4$ & $22$ & $920$ & $9$ & $47$\\
\hline
$936$ & $25$ & $106$ & $940$ & $9$ & $47$ & $945$ & $4$ & $25$\\
\hline
$946$ & $9$ & $43$ & $950$ & $9$ & $47$ & $952$ & $9$ & $42$\\
\hline
$970$ & $9$ & $42$ & $975$ & $4$ & $23$ & $988$ & $9$ & $44$\\
\hline
$994$ & $9$ & $44$ & $1002$ & $25$ & $108$ & $1005$ & $4$ & $21$\\
\hline
$1008$ & $25$ & $108$ & $1010$ & $9$ & $48$ & $1014$ & $25$ & $114$\\
\hline
 $1015$ & $9$ & $42$ & $1022$ & $9$ & $48$ & $1023$ & $4$ & $21$\\
\hline
$1026$ & $25$ & $110$ & $1030$ & $9$ & $47$ & $1034$ & $9$ & $43$\\
\hline
$1035$ & $4$ & $22$ & $1054$ & $9$ & $42$ & $1056$ & $25$ & $112$\\
\hline
$1065$ & $4$ & $22$ & $1066$ & $9$ & $42$ & $1071$ & $4$ & $22$\\
\hline
$1074$ & $25$ & $105$ & $1085$ & $4$ & $21$ & $1086$ & $25$ & $112$\\
\hline
$1095$ & $4$ & $23$ & $1098$ & $25$ & $120$ & $1102$ & $9$ & $41$\\
\hline
$1104$ & $25$ & $112$ & $1105$ & $4$ & $22$ & $1113$ & $4$ & $24$\\
\hline
$1118$ & $9$ & $51$ & $1128$ & $25$ & $122$ & $1131$ & $4$ & $23$\\
\hline
$1146$ & $25$ & $112$ & $1158$ & $25$ & $111$ & $1173$ & $4$ & $24$\\
\hline
$1182$ & $25$ & $119$ & $1194$ & $25$ & $124$ & $1200$ & $49$ & $213$\\
\hline
$1206$ & $25$ & $116$ & $1209$ & $4$ & $23$ & $1221$ & $4$ & $23$\\
\hline
$1235$ & $4$ & $21$ & $1265$ & $4$ & $22$ & $1295$ & $4$ & $23$\\
\hline
$1309$ & $4$ & $25$ & $1540$ & $9$ & $43$ & $1610$ & $9$ & $45$\\
\hline
$1716$ & $25$ & $110$ & $1722$ & $25$ & $107$ & $1785$ & $4$ & $21$\\
\hline
$1794$ & $25$ & $105$ & $1974$ & $25$ & $115$ & $2040$ & $49$ & $230$\\
\hline
$2046$ & $25$ & $111$ & $2190$ & $49$ & $204$ & $2280$ & $49$ & $236$\\
\hline
$2370$ & $49$ & $206$ & $2490$ & $49$ & $222$ & $4290$ & $49$ & $221$\\
    \hline
\end{longtable}
\end{center}
\end{prop}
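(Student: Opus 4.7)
The plan is to invoke Lemma \ref{Fp2points} with $d=4$ for each entry of the table, after noting that the inequality in that lemma actually yields the stronger statement $\textup{gon}_{\F_p}(C) > d$: a degree-$d$ map $C \to \PP^1$ defined over $\F_p$ base-changes to a degree-$d$ map over any extension $\F_q$, so the pigeonhole bound $\#C(\F_q) \leq d \cdot \#\PP^1(\F_q) = d(q+1)$ used in the proof of the lemma applies over any $\F_q$-rational gonal map and rules out low-degree maps over $\F_p$ as well. Hence it suffices, for each listed pair $(N,q)$, to verify the numerical inequality
\[\#X_0^*(N)(\F_q) > 4(q+1).\]

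Concretely, for every row of the table I would first confirm that the prime $p$ with $q = p^n$ is coprime to $N$ (this is true for every entry, e.g.\ $p=3$ for $N=448$, $p=5$ for $N=576$, $p=7$ for $N=1200$), so that $p$ is a prime of good reduction for the canonical model of $X_0^*(N)$ built via Magma's \texttt{X0Nstar()}. I would then reduce that model modulo $p$, run Magma's point-counting routine over $\F_q$, and match the output against the tabulated value of $\#X_0^*(N)(\F_q)$. Finally I would check the single inequality $\#X_0^*(N)(\F_q) > 4(q+1)$ by hand; for instance $42 > 40$ at $(N,q)=(448,9)$, $120 > 104$ at $(576,25)$, $222 > 200$ at $(2490,49)$, and so on for all $135$ entries. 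Each verified line immediately yields $\textup{gon}_{\F_p}(X_0^*(N)) \geq 5$.

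The real work is computational rather than theoretical. The main obstacle is that for the higher levels, e.g.\ $N = 4290$ with $g(X_0^*(N))$ substantially large, just constructing the canonical model and then enumerating $\PP^N(\F_q)$ points on it over $\F_{49}$ consumes serious CPU time and memory on the Euler server described in the Introduction. A secondary subtlety is the choice of $q$: one must take $q = p^n$ large enough that the extra CM/Heegner and cuspidal points over $\F_q$ push $\#X_0^*(N)(\F_q)$ past $4(q+1)$, while keeping $q$ small enough for the enumeration to terminate in reasonable time; the listed $q$ is in each case essentially the minimal such prime power. No additional input beyond Lemma \ref{Fp2points}, the models produced by \texttt{X0Nstar()}, and the comparison $\textup{gon}_{\F_p} \geq \textup{gon}_{\F_q}$ is required.
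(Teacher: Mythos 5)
Your proposal matches the paper's proof in its logical structure: both reduce the claim to the numerical check $\#X_0^*(N)(\F_q) > 4(q+1)$ and invoke the pigeonhole bound underlying Lemma \ref{Fp2points}, which (as you correctly note) yields the $\F_p$-gonality conclusion and not merely the $\Q$-gonality one. The only divergence is how the point counts are obtained: the paper computes them for most levels from the newform decomposition of $J_0^*(N)$ and the characteristic polynomial of Frobenius (falling back to the explicit model only for certain non-squarefree levels where that routine misbehaves), which is substantially cheaper than your proposed direct enumeration on the reduced canonical model for the high-genus, large-$q$ entries such as $N=4290$ over $\F_{49}$.
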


\begin{proof}
    Using \texttt{Magma}, we calculate the number of $\F_q$ points on $X_0^*(N)$. It is now easy to check that $\#X_0^*(N)(\F_q)>4(q+1)$ in all these cases.

    For the majority of these levels we used the function FpnpointsforQuotientcurveX0NWN(), available on \begin{center}
        \url{https://github.com/FrancescBars/Magma-functions-on-Quotient-Modular-Curves/blob/main/funcions.m}.
    \end{center} It computes the newforms $f$ such that the corresponding abelian variety $A_f$ is (op to isogeny) in the decomposition of the Jacobian $J_0^*(N)$ and, after that, the characteristic polynomial \[P(x)=\prod_f (x^2-a_p(f)x+p)=\prod_1^{2g}(x-\alpha_i).\] Here $g$ is the genus of $X_0^*(N)$. The number of $\F_{p^n}$-points on $X_0^*(N)$ is then equal to $p^n+1-\sum_1^{2g}\alpha_i^n$. For some non-squarefree levels $N$ this function fails to give the correct decomposition and returns a higher number of points than the correct one. In these cases we found the number of $\F_q$-points via the model of the curve. More details regarding the code are on Github.

    Most levels were computed very fast, with some higher levels taking $5-10$ minutes and $N=4290$ taking $30$ minutes.
\end{proof}

\section{Betti numbers}\label{betti_section}

Graded Betti numbers $\beta_{i,j}$ are helpful when determining the $\C$-gonality of a curve. We will use the indexation of Betti numbers as in \cite[Section 1.]{JeonPark05}. The results we mention can be found there and in \cite[Section 3.1.]{NajmanOrlic22}. In this section we solve the majority of remaining levels, $135$ levels $N$.

\begin{definition}
    For a curve $X$ and divisor $D$ of degree $d$, $g_d^r$ is a subspace $V$ of the Riemann-Roch space $L(D)$ such that $\dim V=r+1$.
\end{definition}

Therefore, we want to determine whether $X_0^*(N)$ has a $g_4^1$. Green's conjecture relates graded Betti numbers $\beta_{i,j}$ with the existence of $g_d^r$.

\begin{conj}[Green, \cite{Green84}]
    Let $X$ be a curve of genus $g$. Then $\beta_{p,2}\neq 0$ if and only if there exists a divisor $D$ on $X$ of degree $d$ such that a subspace $g_d^r$ of $L(D)$ satisfies $d\leq g-1$, $r=\ell(D)-1\geq1$, and $d-2r\leq p$.
\end{conj}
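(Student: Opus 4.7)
The statement is Green's conjecture for curves, which in full generality is famously deep. My plan is to separate the two implications and work in the framework of Koszul cohomology, translating the graded Betti numbers $\beta_{p,2}$ of the canonical ideal into Koszul groups attached to the canonically embedded curve $X \subset \PP^{g-1}$. Both directions become assertions about non-vanishing/vanishing of these Koszul groups, which is the natural habitat in which the $g_d^r$'s on $X$ interact with syzygies of the canonical ring.

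For the direction asserting that the existence of a $g_d^r$ forces $\beta_{p,2}\neq 0$ at $p = d - 2r$, I would produce explicit nonzero Koszul cycles. Starting from an effective divisor $D$ with $\deg D = d$ and $\ell(D) = r+1$, one pairs sections of $L(D)$ with sections of the residual $L(K_X - D)$ through iterated base-point-free pencil tricks and wedge products; the resulting syzygies among canonical sections can be shown to survive in the relevant Koszul cohomology group. This direction is essentially formal and goes back to Green's original paper, so I would expect it to follow from a careful but standard argument.

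The hard part will be the converse, which is the genuine content of Green's conjecture: a non-zero Koszul class at the $p$-th step must arise from an honest linear series with $d - 2r \leq p$. No proof is known in general, so I would either try to reduce to a settled regime --- Voisin's theorems for generic curves of each gonality, low-genus verifications such as $g \leq 9$, or curves lying on a K3 surface --- or, more practically for the $X_0^*(N)$ actually appearing in this paper, bypass the full conjecture level by level. That is, I would compute the Betti table directly from an explicit canonical model and combine it with the easy direction, invoking only whatever case of the conjecture is already established to conclude non-existence of a $g_4^1$. This latter route is what dovetails with the computational strategy that \Cref{betti_section} appears to carry out.
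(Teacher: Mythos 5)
You have correctly diagnosed the situation: this statement is Green's conjecture, the paper does not prove it (it is stated explicitly as a \emph{conjecture}), and only the ``if'' direction --- existence of a $g_d^r$ with $d-2r\le p$ forces $\beta_{p,2}\neq 0$ --- is a theorem, namely the Green--Lazarsfeld nonvanishing result recorded in the paper as Theorem \ref{thmGreenLazarsfeld}. Your plan to use Koszul cohomology and the base-point-free pencil trick for that direction is exactly the standard argument from the appendix to \cite{Green84}, and your refusal to claim the converse in general is the right call.

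One point of divergence worth noting: where the paper needs to pass from a computed Betti number back to the \emph{existence} of a $g_4^1$ (Proposition \ref{betti_positive_prop}), it does not invoke any settled case of Green's conjecture, nor Voisin's generic results or K3 arguments as you suggest. Instead it uses Schreyer's unconditional classification (Theorem \ref{schreyer}) of the possible values of $\beta_{2,2}$ for canonical curves of genus $g\ge 7$: the value $\beta_{2,2}=g-4$ forces a unique $g_4^1$, which is then descended to $\Q$ via Corollary \ref{unique_g14_cor}. For the vanishing direction ($\beta_{2,2}=0$ implies no $g_4^1$, Corollary \ref{betti0cor}), only Green--Lazarsfeld is used, matching your proposal. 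So the conjecture itself carries no logical weight in the paper; everything factors through the two unconditional theorems, and your computational workaround is sound provided you route the non-vanishing cases through Schreyer rather than through partial cases of the conjecture.
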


The "if" part of this conjecture has been proven in the same paper.

\begin{thm}[Green and Lazarsfeld, Appendix to \cite{Green84}]\label{thmGreenLazarsfeld}
    Let $X$ be a curve of genus $g$. If $\beta_{p,2}=0$, then there does not exist a divisor $D$ on $X$ of degree $d$ such that a subspace $g_d^r$ of $L(D)$ satisfies $d\leq g-1$, $r\geq1$, and $d-2r\leq p$.
\end{thm}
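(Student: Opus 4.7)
The plan is to prove the contrapositive: assuming a divisor $D$ with $\deg D = d \le g-1$, $r := \ell(D)-1 \ge 1$, and $d - 2r \le p$ exists, I want to exhibit a nonzero class in the Koszul cohomology group computing $\beta_{p,2}$. First I would recall the translation: for the canonical embedding $X \hookrightarrow \PP^{g-1}$ the Betti number $\beta_{p,2}(X)$ equals $\dim K_{p,1}(X, K_X)$, the middle cohomology of the three-term Koszul complex $\wedge^{p+1}V \otimes H^0(K_X) \to \wedge^{p}V \otimes H^0(K_X^{\otimes 2}) \to \wedge^{p-1}V \otimes H^0(K_X^{\otimes 3})$, where $V = H^0(X, K_X)$. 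Green's duality theorem provides $K_{p,1}(X, K_X) \cong K_{g-2-p,1}(X, K_X)^*$, so one may work on whichever side is more convenient.

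Second, I would bring in the residual series. Setting $E := K_X - D$, Riemann-Roch gives $\ell(E) = g-d+r$ (using $d \le g-1$ to keep $D$ special), and after replacing $D$ by its moving part I may assume $|D|$ is base-point-free and defines a morphism to $\PP^r$ whose image lies on a rational normal scroll $S \subset \PP^{g-1}$ of dimension $r+1$, swept out by the linear spans of the divisors in $|D|$. The numerical hypothesis $d-2r \le p$ is precisely what makes the multiplication map $H^0(D)\otimes H^0(E) \to H^0(K_X)$ large enough to contribute in the relevant Koszul degree.

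Third, the geometric syzygy construction of Green and Lazarsfeld proceeds as follows. Using bases $\{s_0, \dots, s_r\}$ of $H^0(D)$ and $\{t_0, \dots, t_{g-d+r-1}\}$ of $H^0(E)$, one forms an explicit alternating sum in $\wedge^{d-2r} V \otimes H^0(K_X^{\otimes 2})$ built from the products $s_i t_j \in H^0(K_X) = V$. Either directly or through the Eagon-Northcott resolution of the scroll $S$, this combination defines a Koszul cocycle which is not a coboundary, yielding $K_{d-2r,1}(X,K_X) \ne 0$. Because Koszul cohomology propagates in the first index (tensoring the syzygy module with a generic linear form from $V$), the nonvanishing extends to every $p \ge d-2r$, so $\beta_{p,2} \ne 0$, contradicting the hypothesis.

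The main obstacle will be verifying that the geometric syzygy class is genuinely nonzero in Koszul cohomology, i.e., not a coboundary; this is the technical core of the Green-Lazarsfeld appendix and rests on a careful analysis of the Eagon-Northcott complex of the scroll $S$ together with nondegeneracy of $X$ in $\PP^{g-1}$. Two auxiliary subtleties are handling a base locus of $|D|$ (trim $D$ without violating $d-2r \le p$) and the propagation step from index $d-2r$ to the given $p$; both are classical but require Clifford-type control to remain inside the special range $d \le g-1$.
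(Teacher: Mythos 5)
The first thing to say is that the paper does not prove this statement at all: it is imported by citation from the appendix to \cite{Green84}, so the only meaningful benchmark is the Green--Lazarsfeld argument itself. Your sketch is recognizably an outline of exactly that argument (contrapositive; pass to the residual series $E=K_X-D$ with $\ell(E)=g-d+r\geq 2$ thanks to $d\leq g-1$ and $r\geq 1$; build a class out of the products $s_it_j\in H^0(K_X)$; finish by duality and propagation in the first index). So the route is the standard one, not an alternative. The problem is that the one step carrying the entire content of the theorem --- showing that the constructed Koszul cocycle is \emph{not} a coboundary, i.e.\ that the class is nonzero in cohomology --- is asserted (``this combination defines a Koszul cocycle which is not a coboundary'') and then explicitly deferred as ``the main obstacle \dots the technical core of the Green--Lazarsfeld appendix.'' As written, this is a description of where a proof lives rather than a proof: without the non-coboundary argument (in Green--Lazarsfeld's version, a direct linear-algebra analysis of the subspace $s\cdot H^0(E)+t\cdot H^0(D)\subseteq H^0(K_X)$ showing that a primitive would force $h^0$ of one of the factors to be too large), nothing has been established.

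Two further points need repair even at the level of the outline. First, the indexing: the group computing the paper's $\beta_{p,2}$ is $K_{p,2}(X,K_X)$ --- the middle cohomology of the complex you wrote down, whose middle term involves $H^0(K_X^{\otimes 2})$ --- not $K_{p,1}$; and the duality you actually need is $K_{p,2}(X,K_X)\cong K_{g-2-p,1}(X,K_X)^{*}$, not a statement relating $K_{p,1}$ to $K_{g-2-p,1}$. With your literal indices the Green--Lazarsfeld nonvanishing $K_{r_1+r_2-1,1}(X,K_X)\neq 0$ (with $r_1=r$, $r_2=g-d+r-1$) would not land on the stated $\beta_{p,2}$; since the paper itself devotes a remark to reconciling three competing Betti-number indexations, these slips are not cosmetic. (The scroll swept out by a base-point-free pencil also has dimension $d-1$, not $r+1$, though that remark is not load-bearing.) Second, the propagation from index $d-2r$ to an arbitrary $p\geq d-2r$ is a genuine theorem (dually, $K_{q,1}\neq 0\Rightarrow K_{q-1,1}\neq 0$ for $q\geq 2$) and needs either a proof or a precise reference; ``tensoring with a generic linear form'' gestures at the right mechanism but is not an argument.
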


\begin{cor}[{\cite[Corollary 3.20]{Orlic2023}}]\label{betti0cor}
    Let $X$ be a curve of genus $g\geq5$ with $\beta_{2,2}=0$. Suppose that $X$ is neither hyperelliptic nor trigonal. Then $\textup{gon}_\C(X)\geq5$.
\end{cor}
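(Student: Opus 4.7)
The plan is a direct proof by contradiction using the Green--Lazarsfeld half of Green's conjecture, namely Theorem \ref{thmGreenLazarsfeld}. Assume toward contradiction that $\textup{gon}_\C(X) \leq 4$. Since $X$ is neither hyperelliptic nor trigonal by hypothesis, this forces $\textup{gon}_\C(X) = 4$. Thus there is a non-constant morphism $f \colon X \to \PP^1$ of degree $4$ defined over $\C$; pulling back a point of $\PP^1$ along $f$ produces an effective divisor $D$ on $X$ of degree $d = 4$ whose complete linear system $|D|$ contains the pencil induced by $f$. Equivalently, $L(D)$ contains a subspace $g_4^1$, so we may take $r = 1$.

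The next step is to verify that this $g_d^r$ satisfies the numerical hypotheses of Theorem \ref{thmGreenLazarsfeld} with $p = 2$. Since $g \geq 5$, we have $d = 4 \leq g - 1$; we have $r = 1 \geq 1$ by construction; and the inequality $d - 2r = 4 - 2 = 2 \leq p = 2$ holds with equality. Hence $D$ is of the form whose non-existence is guaranteed by $\beta_{2,2} = 0$. This yields the desired contradiction, so $\textup{gon}_\C(X) \geq 5$.

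The only subtlety, and the step that actually requires the hypotheses of the corollary, is ensuring that Theorem \ref{thmGreenLazarsfeld} is being applied to the correct index. One must resist the temptation to apply it with $p = 1$ (which would rule out pencils of degree $3$) or with $p = 0$ (which would rule out hyperelliptic pencils): for these values the conclusion $\beta_{p,2} = 0$ is a separate hypothesis, which is why the statement of the corollary assumes outright that $X$ is neither hyperelliptic nor trigonal. In other words, $\beta_{2,2} = 0$ alone rules out $g_4^1$'s, but it says nothing about $g_2^1$'s or $g_3^1$'s, which must therefore be excluded by assumption. Once this is clearly laid out, the proof is a one-line application of Green--Lazarsfeld, and no further computation is required.
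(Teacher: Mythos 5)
Your proof is correct and is the intended argument: the paper simply cites \cite[Corollary 3.20]{Orlic2023} for this statement, and the underlying proof there is exactly this one-line application of Theorem \ref{thmGreenLazarsfeld} with $p=2$, $d=4$, $r=1$, using $g\geq 5$ to get $d\leq g-1$ and the non-hyperelliptic/non-trigonal hypotheses to upgrade $\gon_\C(X)\leq 4$ to the existence of a $g_4^1$.

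One remark on your closing ``subtlety'' paragraph, which is the only place you go astray: the condition in Theorem \ref{thmGreenLazarsfeld} is $d-2r\leq p$, not $d-2r=p$, so $\beta_{2,2}=0$ formally rules out $g_2^1$'s ($d-2r=0$) and $g_3^1$'s ($d-2r=1$) as well as $g_4^1$'s. It is therefore not true that the vanishing ``says nothing about'' lower pencils; the real reason the corollary assumes $X$ is neither hyperelliptic nor trigonal is that the graded Betti numbers here are those of the canonical model, whose good behaviour (and the applicability of results such as Theorem \ref{schreyer}) presupposes that the canonical map is an embedding and the curve is not trigonal. This does not affect the validity of your main argument, which stands as written.
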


We can say something more about the curve from the value of the Betti number $\beta_{2,2}$.

\begin{thm}[{\cite[Theoremw 4.1, 4.4]{Schreyer91}}]\label{schreyer}
    Let $C\subseteq\PP^{g-1}$ be a reduced irreducible canonical curve over $\C$ of genus $g\geq7$. Then $\beta_{2,2}$ has one of the values in the following table.

    \begin{center}
        \begin{tabular}{|c|c|c|c|c|}
            \hline
             $\beta_{2,2}$ & $(g-4)(g-2)$ & $\displaystyle\binom{g-2}{2}-1$ & $g-4$ & $0$\\
             \hline
             \textup{linear series} & $\exists g_3^1$ & $\exists g_6^2\textup{ or } g_8^3$ & $\exists \textup{ a single } g_4^1$ & $\textup{no } g_4^1$\\
             \hline
        \end{tabular}
    \end{center}
\end{thm}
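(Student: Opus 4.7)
The plan is to combine the Green--Lazarsfeld non-vanishing criterion (\Cref{thmGreenLazarsfeld}) with explicit resolutions of the rational normal scrolls swept out by the special pencils on $C$. Green--Lazarsfeld already supplies the direction ``existence of a suitable $g_d^r$ implies $\beta_{2,2}\neq 0$'': for $p=2$ its contrapositive says that any $g_d^r$ with $d\le g-1$, $r\ge 1$ and $d-2r\le 2$ forces $\beta_{2,2}\neq 0$, and for $g\ge 7$ these series are precisely the $g_3^1$, $g_4^1$, $g_6^2$, and $g_8^3$ appearing in the table. The substance of the theorem is therefore to compute $\beta_{2,2}$ exactly in each case, and to show that no other value can occur.

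First I would dispose of the trigonal case. A $g_3^1$ on $C$ produces a two-dimensional rational normal scroll $X\subseteq\PP^{g-1}$ of degree $g-2$ containing the canonical embedding, and $C$ appears as a divisor of class $3H-(g-4)R$ on $X$. The minimal free resolution of $\mathcal{O}_X$ is the Eagon--Northcott complex, with explicit Betti numbers, and a mapping-cone against the Koszul resolution of the section defining $C\subseteq X$ yields the minimal free resolution of $C$. Reading off the second linear strand produces $\beta_{2,2}=(g-4)(g-2)$.

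The tetragonal case is analogous, one dimension up. A $g_4^1$ sweeps out a three-dimensional scroll $Y\subseteq\PP^{g-1}$ of degree $g-3$, and $C$ is cut out in $Y$ by the $2\times 2$ minors of a $2\times 3$ Hilbert--Burch matrix of forms. If the $g_4^1$ is unique this matrix is rigid and the same mapping-cone bookkeeping returns $\beta_{2,2}=g-4$. If instead $C$ also carries a $g_6^2$ or $g_8^3$ (equivalently, a smooth plane-sextic or space-octic model), the ideal of $C$ acquires further quadrics and the corresponding Koszul differential picks up additional kernel; a direct dimension count shows its rank drops by exactly $\binom{g-2}{2}-1-(g-4)$, giving $\beta_{2,2}=\binom{g-2}{2}-1$. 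In the absence of any of these series Green--Lazarsfeld directly forces $\beta_{2,2}=0$, completing the four-way split.

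The main obstacle is the rigidity step: one must verify that within each Brill--Noether stratum the scroll construction above is genuinely forced, so that these three resolutions really compute $\beta_{2,2}$ and no intermediate value can sneak in. This reduces to a careful analysis of the ways an independent quadric can enter the homogeneous ideal of $C$, together with a count of the syzygies it generates. The tetragonal case is the subtlest, since distinguishing ``a single $g_4^1$'' from ``more than one pencil of degree four'' requires tracking the second symmetric power of the pencil and its image in $H^0(2K_C)$; this is the technical core of Schreyer's argument and is where most of the labor in a full proof would be spent.
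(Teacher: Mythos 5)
This theorem is quoted from Schreyer's paper and the present article gives no proof of it, so your sketch has to be measured against Schreyer's original argument rather than against anything in the text. The constructive half of your outline is essentially the right one: Schreyer does compute the values $(g-4)(g-2)$, $\binom{g-2}{2}-1$ and $g-4$ by resolving the rational normal scroll swept out by the special pencil (Eagon--Northcott complex) and taking a mapping cone against the relative resolution of $C$ inside the scroll, and you are right that the minimality/rigidity bookkeeping is where the labor sits. (A small imprecision: the series allowed by $d\le g-1$, $r\ge 1$, $d-2r\le 2$ also include a priori $g_5^2$ and $g_7^3$; one has to argue these reduce to the four listed types for $g\ge 7$.)

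The genuine gap is the last column of the table. You assert that ``in the absence of any of these series Green--Lazarsfeld directly forces $\beta_{2,2}=0$,'' but \Cref{thmGreenLazarsfeld} gives exactly the opposite implication: existence of a suitable $g_d^r$ forces $\beta_{2,2}\neq 0$, equivalently $\beta_{2,2}=0$ rules out such a series. The implication you need --- that a curve with no $g_3^1$, $g_4^1$, $g_6^2$ or $g_8^3$ has $\beta_{2,2}=0$ --- is the converse, i.e.\ the $p=2$ case of Green's conjecture, and it is precisely the hard content of Schreyer's Theorem 4.1 (proved there via his structure theory of the syzygy modules as modules over the scroll coordinate rings; Voisin gave an independent proof). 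Your first paragraph states the Green--Lazarsfeld direction correctly and your last case then silently flips it, so the ``$0$'' entry, and with it the claim that \emph{no other value can occur}, is unproven in your proposal. This is not a cosmetic omission for the present paper: \Cref{betti0prop} only needs the easy direction (via \Cref{betti0cor}), but \Cref{betti_positive_prop} reads off ``there exists a single $g_4^1$'' from the computed value $\beta_{2,2}=g-4$, which requires knowing that the four listed values are exhaustive and mutually exclusive --- exactly the part your argument does not supply.
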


Notice that the indexation of Betti numbers is different here and in \cite{Schreyer91}, where they are indexed as $\beta_{2,4}$.

The existence of a single $g_4^1$ is useful as the following result tells us that in that case we have a rational morphism of degree $\leq4$ to $\PP^1$.

\begin{thm}[{\cite[Theorem 5]{RoeXarles2014}}]
    Let $C/k$ be a smooth projective curve. Suppose that for a fixed $r$ and $d$ there is only one $g_d^r$, giving a morphism $f:C_{k^{\textup{sep}}}\to \PP_{k^{\textup{sep}}}^r$. Then there exists a Brauer-Severi variety $\mathcal{P}$ defined over $k$ together with a $k$-morphism $g:C\to\mathcal{P}$ such that $g\oplus_k k^{\textup{sep}}:C_{k^{\textup{sep}}}\to\mathcal{P}_{k^{\textup{sep}}}\cong\PP_{k^{\textup{sep}}}^r$ is equal to $f$.
\end{thm}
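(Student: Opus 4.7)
The plan is to prove this via Galois descent. The uniqueness hypothesis forces the unique $g_d^r$ on $C_{k^{\textup{sep}}}$ to be stable under the action of $G := \Gal(k^{\textup{sep}}/k)$. Recall that a $g_d^r$ is (up to equivalence) a pair $(\mathcal{L}, V)$ with $\mathcal{L}$ a line bundle of degree $d$ on $C_{k^{\textup{sep}}}$ and $V \subseteq H^0(C_{k^{\textup{sep}}}, \mathcal{L})$ a subspace of dimension $r+1$. Galois invariance of the linear series then means first that the class $[\mathcal{L}] \in \Pic^d(C_{k^{\textup{sep}}})$ is $G$-fixed, hence lies in $\Pic^d_C(k)$, and second that for every $\sigma \in G$ there is a (canonical up to scalar) isomorphism $\sigma^*(\mathcal{L},V) \cong (\mathcal{L},V)$.

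Next I would construct the Brauer-Severi variety $\mathcal{P}$. The line bundle $\mathcal{L}$ itself may fail to descend to $C/k$; the obstruction lies in the Brauer group $\operatorname{Br}(k)$, because one can choose isomorphisms $\varphi_\sigma: \sigma^*\mathcal{L} \xrightarrow{\sim} \mathcal{L}$ but they may satisfy the cocycle condition only up to a scalar $2$-cocycle in $(k^{\textup{sep}})^\times$. However, the induced maps on projectivizations are insensitive to rescaling $\varphi_\sigma$, so the resulting $G$-action on $\PP(V^\vee)_{k^{\textup{sep}}}$ is well defined and satisfies the cocycle condition on the nose. By effective descent for quasi-projective schemes along the Galois cover $\Spec k^{\textup{sep}} \to \Spec k$, this descent datum on $\PP(V^\vee) \cong \PP^r_{k^{\textup{sep}}}$ produces a $k$-scheme $\mathcal{P}$, which is by construction a Brauer-Severi variety of dimension $r$.

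Finally, the morphism $f: C_{k^{\textup{sep}}} \to \PP(V^\vee)$ attached to $(\mathcal{L},V)$ is canonical: away from the base locus, a geometric point $x$ of $C$ is sent to the hyperplane in $V$ given by the kernel of the evaluation map $V \to \mathcal{L}|_x$. This construction depends only on the equivalence class of $(\mathcal{L},V)$, so it is automatically compatible with the $G$-action above. Hence $f$ descends to a $k$-morphism $g: C \to \mathcal{P}$ whose base change to $k^{\textup{sep}}$ recovers $f$ under the identification $\mathcal{P}_{k^{\textup{sep}}} \cong \PP^r_{k^{\textup{sep}}}$.

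The main obstacle is conceptual rather than computational: one must resist the temptation to descend $\mathcal{L}$ and $V$ individually to $k$, since that obstruction can be non-trivial, and instead descend only the projectivization. This is exactly why the target is a Brauer-Severi variety rather than $\PP^r$ itself, and it also explains why the eventual application in this paper produces only a $k$-morphism to $\mathcal{P}$, which is isomorphic to $\PP^r$ over $k$ if and only if $\mathcal{P}$ carries a $k$-rational point (and otherwise splits only over some nontrivial extension of $k$).
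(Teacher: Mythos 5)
Your descent argument is correct, and it is essentially the argument in the cited source: this theorem is quoted from \cite[Theorem 5]{RoeXarles2014} and the paper itself supplies no proof, so there is nothing internal to compare against. Two points you gloss over are worth making explicit: the reason the projectivized cocycle condition holds \emph{on the nose} is that an automorphism of the pair $(\mathcal{L},V)$ on a proper geometrically integral curve is multiplication by a scalar in $(k^{\textup{sep}})^\times$, so each $\varphi_\sigma$ is unique up to scalar and its projectivization is canonical; and effective Galois descent along $\Spec k^{\textup{sep}}\to\Spec k$ requires the descent datum to be continuous, which one gets by observing that $\mathcal{L}$, $V$, and the finitely many relevant $\varphi_\sigma$ are all defined over a finite subextension. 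With those two remarks added, your proof is complete and matches the standard one.
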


\begin{cor}\label{unique_g14_cor}
    Let $C/\Q$ be smooth projective curve such that there is only one $g_d^1$. Then there exists a conic $\mathcal{P}$ defined over $\Q$ and a degree $d$ rational map $g:C\to\mathcal{P}$. If additionally $C$ has at least one rational point, then $\mathcal{P}$ is $\Q$-isomorphic to $\PP^1$.
\end{cor}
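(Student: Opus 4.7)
The plan is to apply the theorem of Ro\'e and Xarles cited immediately above with $k=\Q$ and $r=1$. Under the hypothesis that there is only one $g_d^1$ on $C$, this theorem yields a Brauer--Severi variety $\mathcal{P}$ of dimension $1$ defined over $\Q$ together with a $\Q$-morphism $g:C\to\mathcal{P}$ whose base change to $\Q^{\mathrm{sep}}$ agrees with the unique morphism $C_{\Q^{\mathrm{sep}}}\to \PP^1_{\Q^{\mathrm{sep}}}$ attached to the $g_d^1$. In particular $g$ has degree $d$, since degree is preserved under the separable base change.

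Next I would translate the conclusion into the language of conics. By definition, a Brauer--Severi variety of dimension $1$ is a smooth projective $\Q$-variety which becomes isomorphic to $\PP^1$ after base change to $\Q^{\mathrm{sep}}$ (equivalently, to $\overline{\Q}$). Such a variety is precisely a smooth conic in $\PP^2_\Q$: the anticanonical divisor is defined over $\Q$, has degree $2$ on the geometric fibre, and its complete linear system provides a closed embedding into $\PP^2_\Q$ whose image is a smooth plane conic. This gives the first assertion.

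For the second assertion, suppose additionally that $C(\Q)\neq\emptyset$ and pick any rational point $P\in C(\Q)$. Then $g(P)\in\mathcal{P}(\Q)$, so the conic $\mathcal{P}$ possesses a $\Q$-rational point. A smooth plane conic with a rational point is $\Q$-isomorphic to $\PP^1$ via stereographic projection from that point, and we are done.

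The only content beyond the cited theorem is the standard dictionary between one-dimensional Brauer--Severi varieties and smooth plane conics, together with the classical fact that a conic with a rational point is rational; there is no genuine obstacle to overcome.
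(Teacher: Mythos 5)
Your proposal follows essentially the same route as the paper: invoke the Ro\'e--Xarles theorem with $k=\Q$, $r=1$, identify one-dimensional Brauer--Severi varieties with smooth conics, and conclude via the rational point. The one place where your justification is off is the claim that $g$ has degree $d$: ``degree is preserved under separable base change'' only shows $\deg g=\deg f$, not that $f$ itself has degree $d$ --- a priori the $g_d^1$ could have base points, in which case the associated morphism would have degree $<d$. The uniqueness hypothesis is what rules this out (a base point could be replaced by any other point, producing many distinct $g_d^1$'s), and this is exactly the short argument the paper supplies; with that sentence added your proof is complete.
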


\begin{proof}
    In dimension $1$ Brauer-Severi varieties are conics. The degree of $f$ (and of $g$) cannot be $\leq d-1$ since $g_d^1$ is unique. Therefore, it is of degree $d$. If $C(\Q)\neq\emptyset$, then $\mathcal{P}(\Q)\neq\emptyset$ and it is isomorphic to $\PP^1$ over $\Q$.
\end{proof}

\begin{prop}\label{betti0prop}
    The $\C$-gonality of $X_0^*(N)$ is at least $5$ for the following $127$ values of $N$. Here $g$ is the genus of $X_0^*(N)$.

    \begin{center}
\begin{longtable}{|c|c|}

\hline
\addtocounter{table}{-1}
$N$ & $g$\\
    \hline

\makecell{$298,309,324,358,363,365,411,425,437,445,446,450,474,478,490,492,498,$\\$501,518,519,525,527,530,550,582,623,636,638,646,671,710,861,870,897,924$} & $7$\\

\hline

\makecell{$333,346,356,362,381,403,405,408,415,427,468,480,505,511,534,540,545,552,$\\$559,573,580,581,606,651,654,665,682,715,759,782,805,814,930,966,$\\$1001,1020,1155,1190$} & $8$\\

\hline

\makecell{$352,384,386,387,392,404,428,441,454,459,469,471,473,475,481,497,502,516,$\\$522,524,526,531,537,539,542,588,591,594,602,610,611,618,622,642,644,650,655,$\\$689,693,695,705,735,777,790,855,860,935,987,1045,1110,1122,1140,1365$} & $9$\\

\hline

$416$ & $10$\\
 
\hline

\caption*{}
\end{longtable}
\end{center}
\end{prop}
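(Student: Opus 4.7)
The plan is to apply \Cref{betti0cor} to each listed level. For each $N$ in the table, the strategy is to compute a canonical model of $X_0^*(N)$ in $\PP^{g-1}$ and then compute its graded Betti number $\beta_{2,2}$; once we verify $\beta_{2,2}=0$, together with the fact that $X_0^*(N)$ is neither hyperelliptic nor trigonal, the corollary immediately yields $\textup{gon}_\C(X_0^*(N))\geq 5$.

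First I would obtain each curve via the \texttt{Magma} function \texttt{X0Nstar()}, which produces an equation model; one then passes to the canonical embedding. Before invoking the Betti-number argument, I need to discharge the hypotheses of \Cref{betti0cor}. The hyperelliptic levels $N$ with $X_0^*(N)$ hyperelliptic were classified by Hasegawa and none of the levels in the proposition appear in that list; similarly the $\C$-trigonal levels of genus $\geq 5$ are the ones listed by Hasegawa-Shimura in \cite{HasegawaShimura2000}, and in \Cref{preliminaries_section} it was already observed that these (beyond prime powers) are only $N\in\{253,302,323,351,555\}$, none of which occur in the current table. So both exclusions hold automatically.

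The main step is then the Betti computation. In \texttt{Magma} one builds the ideal $I$ of the canonical image, computes its minimal free resolution via \texttt{MinimalFreeResolution()}, and reads off the graded Betti table with \texttt{BettiTable()}. For each $N$ in the table we check that the $(2,2)$ entry vanishes, which by Theorem \ref{thmGreenLazarsfeld} rules out the existence of any $g_4^1$ (take $d=4$, $r=1$, $p=2$; then $d\leq g-1$ is satisfied since $g\geq 7$ for every listed $N$, and $d-2r=2\leq p$). Combined with the non-hyperelliptic, non-trigonal property, this gives $\textup{gon}_\C(X_0^*(N))\geq 5$ by \Cref{betti0cor}.

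The principal obstacle is purely computational: the canonical ideal of a genus $9$ or $10$ curve in $\PP^8$ or $\PP^9$ has many quadric generators, so computing the minimal free resolution can be memory-intensive. To make the computation tractable I would reduce the defining equations modulo a carefully chosen prime $p$ of good reduction (semicontinuity of Betti numbers ensures that $\beta_{2,2}^{\C}\leq\beta_{2,2}^{\F_p}$, so a vanishing over $\F_p$ already implies vanishing over $\C$), pick $p$ small (typically $p=3$ or $p=5$) to keep Gröbner bases manageable, and split the computation level by level. The largest cases (genus $9$ and $10$) will be the slowest, but they are still within reach of the \texttt{Magma} setup described in the introduction.
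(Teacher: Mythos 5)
Your proposal follows essentially the same route as the paper: compute $\beta_{2,2}=0$ for a canonical model of each $X_0^*(N)$ and invoke Corollary \ref{betti0cor} together with the known non-hyperelliptic, non-trigonal classifications. The only difference is your suggestion to reduce modulo a small prime and use upper semicontinuity of Betti numbers, which is a valid computational shortcut the paper does not need (it computes directly, noting run times from seconds up to a few hours for the genus $9$ and $10$ cases).
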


\begin{proof}
    For all these levels we compute that $\beta_{2,2}=0$. We know from \cite{HasegawaShimura2000} that these curves have $\C$-gonality at least $4$. Thus by Corollary \ref{betti0cor} they are not tetragonal over $\C$.

    The computations for $g=7$ levels took only several seconds, for $g=8$ up to several minutes, and for $g=9$ they could take around $1$ hour. The computation for the genus $10$ level $N=416$ took around $4$ hours. 
    
    In the code Betti numbers are computed with the \texttt{Magma} function BettiNumber(\_,2,4). The indexation of $\beta_{i,j}$ in that function is different from that in this paper. It is the same indexation as in \cite{Schreyer1986} and \cite{Schreyer91}, and the reader can consult \cite[Table 1]{Schreyer1986} to check this result.

    One of the requirements of Theorem \ref{schreyer} was that the curve must be canonical. The function X0Nstar() gives a canonical model, although not Petri's model. The provided model consists of cubics instead of quadrics, as mentioned in \Cref{genus46section}.
\end{proof}

\begin{prop}\label{betti_positive_prop}
    The curve $X_0^*(N)$ has $\Q$ and $\C$-gonality equal to $4$ for the following $8$ values of $N$:

    \begin{center}
\begin{longtable}{|c|c|c|}

\hline
\addtocounter{table}{-1}
$N$ & $g$ & $\beta_{2,2}$\\
    \hline

$320,326,368,658$ & $7$ & $3$\\

\hline

$304,585$ & $8$ & $4$\\

\hline

$528,560$ & $9$ & $5$\\
 
\hline

\caption*{}
\end{longtable}
\end{center}
\end{prop}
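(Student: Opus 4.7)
The plan is to combine Schreyer's classification (Theorem \ref{schreyer}) with Corollary \ref{unique_g14_cor}, reducing everything to a graded Betti number computation. For each of the $8$ levels, I would first build the canonical model of $X_0^*(N)$ via the \texttt{Magma} function X0Nstar() and then compute $\beta_{2,2}$ using BettiNumber(\_,2,4), exactly as in the proof of Proposition \ref{betti0prop}. The target is to confirm the tabulated values, all of which satisfy $\beta_{2,2}=g-4$.

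Once $\beta_{2,2}=g-4$ is verified, Theorem \ref{schreyer} (applicable because every listed $g$ satisfies $g\geq 7$) places $X_0^*(N)_\C$ in the third column of Schreyer's table: the curve admits a \emph{single} $g_4^1$. Existence of a $g_4^1$ already gives $\textup{gon}_\C(X_0^*(N))\leq 4$, and since by \cite{HasegawaShimura2000} these curves are neither hyperelliptic nor trigonal over $\C$, equality $\textup{gon}_\C(X_0^*(N))=4$ follows at once.

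For the $\Q$-gonality I would then invoke Corollary \ref{unique_g14_cor}. Uniqueness of the $g_4^1$ forces the existence of a conic $\mathcal{P}/\Q$ together with a $\Q$-rational morphism $g\colon X_0^*(N)\to\mathcal{P}$ of degree $4$. To upgrade $\mathcal{P}$ to $\PP^1_\Q$ it suffices to exhibit a rational point on $X_0^*(N)$; the image of the rational cusp $\infty\in X_0(N)(\Q)$ under the $\Q$-rational quotient map $X_0(N)\to X_0^*(N)$ supplies one. Combined with the general inequality $\textup{gon}_\Q\geq\textup{gon}_\C$ from Proposition \ref{poonen}(i), this forces $\textup{gon}_\Q(X_0^*(N))=4$.

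The main obstacle will be purely computational rather than conceptual: by analogy with the timings reported in Proposition \ref{betti0prop}, evaluating $\beta_{2,2}$ for the genus $9$ levels $N=528$ and $N=560$ is likely to be the most expensive step, potentially requiring on the order of an hour per level on the Euler server, while the genus $7$ and $8$ levels should finish in seconds to minutes. Beyond these Magma runs there is no further arithmetic input required, since Schreyer's theorem and Corollary \ref{unique_g14_cor} jointly deliver both bounds on gonality with no case analysis.
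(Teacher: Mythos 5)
Your proposal is correct and follows essentially the same route as the paper: compute $\beta_{2,2}=g-4$ on the canonical model, invoke Theorem \ref{schreyer} to get a unique $g_4^1$, and use Corollary \ref{unique_g14_cor} together with the rational cusp to descend the degree $4$ map to $\PP^1$ over $\Q$. The only (harmless) extra flourish is your explicit appeal to $\textup{gon}_\Q\geq\textup{gon}_\C$ to pin down the lower bound, where the paper simply cites non-trigonality from \cite{HasegawaShimura2000}.
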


\begin{proof}
    For these levels we computed that $\beta_{2,2}=g-4$. Hence there exists a unique $g_4^1$ by Theorem \ref{schreyer}. Since curves $X_0^*(N)$ have a rational cusp, Corollary \ref{unique_g14_cor} now tells us that we have a rational map to $\PP^1$ of degree $4$. Hence all these curves have $\Q$ and $\C$-gonality $4$ as they are not trigonal by \cite{HasegawaShimura2000}.
\end{proof}

The reason why we computed Betti numbers only for curves of genus $g\leq9$ (with the exception of $N=416$) is the Tower theorem.

\begin{thm}[The Tower Theorem]\label{towerthm}
Let $C$ be a curve defined over a perfect field $k$ such that $C(k)\neq0$ and let $f:C\to \mathbb{P}^1$ be a non-constant morphism over $\overline{k}$ of degree $d$. Then there exists a curve $C'$ defined over $k$ and a non-constant morphism $C\to C'$ defined over $k$ of degree $d'$ dividing $d$ such that the genus of $C'$ is $\leq (\frac{d}{d'}-1)^2$.
\end{thm}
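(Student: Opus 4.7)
I would prove the Tower Theorem by a Galois-descent construction combined with the Castelnuovo--Severi inequality applied to a pair of Galois conjugates of $f$.

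First, since $f\colon C \to \PP^1$ is defined over $\overline{k}$, it is in fact defined over some finite Galois extension $L/k$; set $G = \Gal(L/k)$ and let $f_1 = f, f_2, \dots, f_n$ be the distinct $G$-conjugates of $f$, each a degree-$d$ morphism from $C_{\overline{k}}$ to $\PP^1$. Each $f_i$ corresponds to a subfield $M_i := \overline{k}(f_i) \subset \overline{k}(C)$ with $[\overline{k}(C):M_i] = d$. Let $M := M_1 M_2 \cdots M_n$ be their compositum inside $\overline{k}(C)$; since $G$ permutes the $M_i$, the field $M$ is $G$-stable.

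Second, I would descend $M$ to $k$. Using the hypothesis $C(k) \neq \emptyset$ (which pins down $k(C) = \overline{k}(C)^G$ under the natural $G$-action), Galois descent yields a subfield $L' \subseteq k(C)$ with $\overline{k} \cdot L' = M$. Let $C'/k$ be the smooth projective curve with function field $L'$; the inclusion $L' \hookrightarrow k(C)$ defines a nonconstant $k$-morphism $\pi\colon C \to C'$ of degree $d' := [\overline{k}(C):M]$, and since $M_1 \subseteq M$, the degree $d'$ divides $d$. Over $\overline{k}$, each $f_i$ factors as $g_i \circ \pi$ with $g_i\colon C'_{\overline{k}} \to \PP^1$ of degree $[M:M_i] = d/d'$.

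Third, to bound $g(C')$ I would apply the Castelnuovo--Severi inequality. The ideal scenario is to find two indices $i \neq j$ such that the compositum $M_i M_j$ already equals $M$; then the product map $(g_i,g_j)\colon C'_{\overline{k}} \to \PP^1 \times \PP^1$ is birational onto its image, a curve of bidegree $(d/d',\,d/d')$, and Castelnuovo--Severi immediately gives
\[
g(C') \leq \left(\tfrac{d}{d'}-1\right)\left(\tfrac{d}{d'}-1\right) = \left(\tfrac{d}{d'}-1\right)^2,
\]
which is exactly the bound sought.

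The main obstacle is justifying the existence of such a pair of conjugates whose compositum is all of $M$: in principle the compositum of all $n$ subfields can be strictly larger than any two-term compositum. The way around this is a chain-argument: build $M$ as a tower $M_1 \subsetneq M^{(2)} \subsetneq \cdots \subsetneq M^{(r)} = M$ by successively adjoining conjugates, apply Castelnuovo--Severi at each intermediate step to the corresponding tower of curves, and verify that the indices telescope multiplicatively so the final bound collapses to $(d/d'-1)^2$. Making sure each intermediate compositum is still $G$-stable (so descent is not broken) --- or, failing that, symmetrizing the construction and tracking the correct Galois orbits --- is the delicate combinatorial point, and is where the bulk of the technical work lies.
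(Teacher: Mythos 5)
The paper itself gives no proof of this statement --- it is quoted as a known result (due to Nguyen--Saito, reproduced e.g.\ in the sources of \cite{NajmanOrlic22}) --- so there is no internal argument to compare against; judged on its own, your plan is the standard proof and it does work. Two of the points you flag as delicate are in fact harmless, and the telescoping you defer does close up. First, the intermediate composita $N_1=M_{i_1}\subsetneq N_2\subsetneq\cdots\subsetneq N_r=M$ do \emph{not} need to be $G$-stable and are never descended: only the full compositum $M$ of all conjugates is descended (and it is $G$-stable because $G$ permutes the $M_i$), while the genus bound is purely geometric since $g(C')=g(C'_{\overline{k}})$, so the entire chain argument may be carried out over $\overline{k}$. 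Second, the recursion: with $Y_j$ the curve with function field $N_j=N_{j-1}M_{i_j}$, $a_j=[N_j:N_{j-1}]\geq 2$ and $B_j=d/[\overline{k}(C):N_j]$ (so $B_1=1$, $B_j=a_jB_{j-1}$, $B_r=d/d'$), the two maps $Y_j\to Y_{j-1}$ and $Y_j\to\PP^1$ of degrees $a_j$ and $[N_j:M_{i_j}]=B_j$ admit no common nontrivial factorization precisely because $N_{j-1}$ and $M_{i_j}$ generate $N_j$, so Castelnuovo--Severi gives $g(Y_j)\leq a_j\,g(Y_{j-1})+(a_j-1)(B_j-1)$; assuming inductively $g(Y_{j-1})\leq(B_{j-1}-1)^2$, one checks
\[
(B_j-1)^2-a_j\Bigl(\tfrac{B_j}{a_j}-1\Bigr)^2-(a_j-1)(B_j-1)=\frac{(a_j-1)(B_j-a_j)B_j}{a_j}\geq 0,
\]
so $g(Y_j)\leq(B_j-1)^2$ and the final bound is $(d/d'-1)^2$. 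One last correction: the hypothesis $C(k)\neq\emptyset$ is not what makes the descent of $M$ work --- Galois descent of a $G$-stable intermediate field of $\overline{k}\otimes_k k(C)$ is automatic --- rather it guarantees, in the applications, that a genus-$0$ quotient $C'$ receives a rational point and is therefore $k$-isomorphic to $\PP^1$ rather than a nontrivial conic.
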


\begin{cor}\cite[Corollary 4.6. (ii)]{NajmanOrlic22}\label{towerthmcor}
    Let $C$ be a curve defined over $\Q$ with $\textup{gon}_\C (C)=4$ and $g(C)\geq10$ and such that $C(\Q)\neq\emptyset$. Then $\textup{gon}_\Q (C)=4$.
\end{cor}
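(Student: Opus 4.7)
The plan is to combine the easy lower bound from Proposition \ref{poonen}(i), which gives $\textup{gon}_\Q(C) \geq \textup{gon}_\C(C) = 4$ for free, with an application of the Tower Theorem to produce a degree $4$ $\Q$-rational morphism $C \to \PP^1$. All of the real work will therefore go into the upper bound $\textup{gon}_\Q(C) \leq 4$.

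First I would fix any $\C$-rational morphism $f : C \to \PP^1$ of degree $4$ (which exists by hypothesis) and invoke Theorem \ref{towerthm} to obtain a $\Q$-curve $C'$ together with a non-constant $\Q$-morphism $\pi : C \to C'$ of some degree $d' \mid 4$ satisfying $g(C') \leq (4/d' - 1)^2$. The argument then splits into cases on $d' \in \{1,2,4\}$, where the genus bound reads $g(C') \leq 9, 1, 0$ respectively. The case $d' = 1$ is immediately excluded because it would force $g(C) = g(C') \leq 9$, contradicting $g(C) \geq 10$. In the case $d' = 4$, the image in $C'(\Q)$ of any point in $C(\Q)$ is a rational point on a genus $0$ curve, so $C' \cong_\Q \PP^1$, and $\pi$ is itself a degree $4$ $\Q$-morphism to $\PP^1$. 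In the case $d' = 2$ with $g(C') = 0$, the same argument would produce a degree $2$ $\Q$-map $C \to \PP^1$; this is ruled out by $\textup{gon}_\C(C) = 4$, so this subcase does not occur.

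The only subcase requiring any thought is $d' = 2$ with $g(C') = 1$. Here $C'$ inherits a $\Q$-point from $C(\Q)$ and so is an elliptic curve over $\Q$; I would use the $\Q$-rational involution $[-1]$ based at that point to produce a degree $2$ $\Q$-morphism $C' \to \PP^1$, and then compose it with $\pi$ (equivalently, apply Proposition \ref{poonen}(vi) with $\deg\pi = 2$ and $\textup{gon}_\Q(C') = 2$) to obtain a degree $4$ $\Q$-morphism $C \to \PP^1$. Since every admissible branch yields such a morphism, the upper bound $\textup{gon}_\Q(C) \leq 4$ follows, completing the proof. There is no real obstacle: once the Tower Theorem is granted, the argument is a short case analysis, with the hypothesis $g(C) \geq 10$ serving only to kill the trivial case $d' = 1$ and the hypothesis $\textup{gon}_\C(C) = 4$ serving only to kill the hyperelliptic subcase.
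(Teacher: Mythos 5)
Your proof is correct and follows essentially the same route the paper intends: the result is quoted from \cite[Corollary 4.6(ii)]{NajmanOrlic22}, whose proof is exactly this case analysis on the degree $d'\mid 4$ furnished by the Tower Theorem, which is why the paper states Theorem \ref{towerthm} immediately beforehand. The only cosmetic point is that the Tower Theorem is stated for a morphism over $\overline{\Q}$ rather than $\C$, but Proposition \ref{poonen}(ii) gives $\textup{gon}_{\overline{\Q}}(C)=\textup{gon}_{\C}(C)=4$, so your starting map exists over $\overline{\Q}$ as required.
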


Therefore, for curves $X_0^*(N)$ of genus $g\geq10$ it is enough to prove that $\Q$-gonality is greater than $4$ and the $\C$-gonality will also be greater than $4$.

\section{Morphisms $X_0^*(N)\to Y$}\label{morphism_section}

In this section we construct rational morphisms from curves $X_0^*(N)$ and use them to obtain gonality bounds for the remaining $93$ levels $N$. We first present results that prove the existence of degree $4$ rational morphisms to $\PP^1$.

\begin{prop}\label{pointsearch}
    The curve $X_0^*(N)$ has $\Q$ and $\C$-gonality equal to $4$ for the following $24$ levels \begin{align*}
        N\in\{&218,226,235,237,250,278,339,382,391,393,402,406,\\
        &407,413,418,435,438,465,494,551,555,574,595,645\}.
    \end{align*}
\end{prop}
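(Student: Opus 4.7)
The plan is to construct, for each of the $24$ listed levels $N$, an explicit effective $\Q$-rational divisor $D \geq 0$ of degree $4$ on $X_0^*(N)$ with $\dim_\Q L(D) \geq 2$. The associated complete linear system $|D|$ then yields a non-constant $\Q$-rational morphism $X_0^*(N) \to \PP^1$ whose degree equals $\deg D$ minus the degree of the base locus, hence divides $4$. Since none of these $X_0^*(N)$ is $\C$-hyperelliptic or $\C$-trigonal by \cite{Hasegawa1997, HasegawaShimura2000}, the degree must equal $4$. Combining $\textup{gon}_\Q(X_0^*(N)) \leq 4$ with the lower bound $\textup{gon}_\C(X_0^*(N)) \geq 4$ (from the same classifications) and Proposition \ref{poonen}(i) then yields $\textup{gon}_\Q(X_0^*(N)) = \textup{gon}_\C(X_0^*(N)) = 4$.

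Concretely, I would first obtain a canonical model of $X_0^*(N)$ via the \texttt{Magma} function \texttt{X0Nstar()}, as in the previous sections. Next, I would enumerate $\Q$-rational points of small height on this model: the rational cusp(s) of $X_0(N)$ always descend to $\Q$-points of $X_0^*(N)$ and provide a guaranteed starting pool, to which a naive point search typically adds further rational points coming from CM orbits and from fixed loci of residual involutions. Finally, I would loop over effective $\Q$-rational divisors $D$ of degree $4$ formed from the collected points --- partitions $4 = 1+1+1+1 = 1+1+2 = 1+3 = 2+2 = 4$, where a summand $s$ corresponds to a Galois orbit of size $s$ --- and compute $\dim_\Q L(D)$ with \texttt{Magma}'s Riemann--Roch machinery, stopping as soon as a divisor $D$ with $\dim L(D) \geq 2$ is found.

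The main obstacle is that for the higher-genus levels on the list a pure rational-point search may be insufficient: there may simply not be enough $\Q$-points, or the right combination of them, to assemble into a degree $4$ divisor with the required Riemann--Roch dimension. In that case one must enlarge the candidate pool by including small Galois orbits (e.g.\ quadratic or cubic points, which still assemble into $\Q$-rational effective divisors), or by using the quotient maps from $X_0^*(N)$ developed later in the section to pull back distinguished low-degree divisors. Once a suitable $D$ has been exhibited at each of the $24$ levels, verifying $\deg D = 4$ and $\dim_\Q L(D) \geq 2$ in \texttt{Magma} is immediate, and the conclusion on both the $\Q$- and $\C$-gonality follows from the argument in the first paragraph.
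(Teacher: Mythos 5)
Your proposal matches the paper's proof in essentially every respect: the paper also searches for rational points with \texttt{Magma}'s \texttt{PointSearch()} on a model of each (genus $5$) curve, assembles them into a degree $4$ effective rational divisor with Riemann--Roch dimension $2$, and concludes gonality $4$ from the known non-hyperelliptic/non-trigonal classification; for these $24$ levels sums of four rational points suffice, and your fallback to quadratic points is exactly what the paper does for the separate list in the following proposition. (One small slip: the degree of the induced map is $\deg D$ minus the base locus, so it is \emph{at most} $4$ rather than a divisor of $4$, but your exclusion of degrees $1$, $2$, $3$ still closes the argument.)
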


\begin{proof}
    All these curves have genus $5$. Using \texttt{Magma} we found a rational divisor that is a sum of $4$ rational points and has Riemann-Roch dimension equal to $2$. We searched for rational points using the \texttt{Magma} function PointSearch().
\end{proof}

\begin{prop}\label{quadpts}
    The curve $X_0^*(N)$ has $\Q$ and $\C$-gonality equal to $4$ for the following $11$ levels
    \[N\in\{288,371,377,410,423,442,663,714,770,798,910\}.\]
\end{prop}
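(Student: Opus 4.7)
The plan is to follow the same strategy as in Proposition \ref{pointsearch}: exhibit, for each of the eleven levels $N$, an effective $\Q$-rational divisor $D$ of degree $4$ on $X_0^*(N)$ with $\ell(D)\geq 2$, which produces a non-constant morphism $X_0^*(N)\to\PP^1$ of degree $\leq 4$ defined over $\Q$. Combined with the already-established lower bound (none of these curves is hyperelliptic or trigonal by Hasegawa--Shimura \cite{HasegawaShimura2000}, so $\textup{gon}_\Q\geq\textup{gon}_\C\geq 4$), this forces $\textup{gon}_\Q(X_0^*(N))=\textup{gon}_\C(X_0^*(N))=4$.

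The difference from Proposition \ref{pointsearch} (and the reason for a separate statement) is that for the eleven levels listed, a simple \texttt{PointSearch} for rational points on the canonical model of $X_0^*(N)$ does not yield four rational points whose sum has Riemann--Roch dimension~$2$. The name of the proposition indicates the remedy: instead of summing four rational points, I will use \emph{quadratic} points. Concretely, for each level I would first obtain a canonical model via \texttt{X0Nstar()}, then enumerate low-height effective $\Q$-rational divisors of degree~$2$ on this model. These arise either as sums $P+P^\sigma$ for a point $P$ defined over a small imaginary or real quadratic field (with $\sigma$ the nontrivial Galois automorphism) or as sums of two rational points. A practical way to find such $P$ is to intersect the canonical model with hyperplanes defined over $\Q$ and factor the resulting degree-$(2g-2)$ divisor to pick out $\Q$-rational degree-$2$ components; equivalently one searches for points over small quadratic fields using \texttt{PointSearch} after base change.

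Having assembled a pool of $\Q$-rational degree-$2$ effective divisors $\{D_1,\ldots,D_m\}$, I would run through sums $D_i+D_j$ (and possibly $D_i+P+Q$ for rational points $P,Q$) to get $\Q$-rational effective divisors of degree~$4$, and for each candidate $D$ compute $\ell(D)$ via \texttt{RiemannRochSpace}. As soon as one candidate satisfies $\ell(D)\geq 2$, the linear system $|D|$ supplies the required degree-$4$ map to $\PP^1$ over $\Q$ (note that since $X_0^*(N)(\Q)\neq\emptyset$, any such Brauer--Severi target is isomorphic to $\PP^1$ over $\Q$, consistent with Corollary \ref{unique_g14_cor}). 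For the explicit divisors found, the verification $\ell(D)=2$ is an entirely mechanical \texttt{Magma} computation.

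The main obstacle is producing the quadratic points efficiently: for the highest levels in the list (e.g.\ $N=910$), the canonical models live in high-dimensional projective space and a naive search over quadratic fields is slow. I expect to mitigate this by exploiting the quotient structure: CM points and cuspidal images on $X_0(N)$ yield low-degree points on $X_0^*(N)$ automatically, and pullback of rational points along any known map $X_0^*(N)\to Y$ of degree~$2$ (an Atkin--Lehner or modular quotient) will supply further candidates. Once a handful of degree-$2$ divisors is in hand, finding a combination with $\ell\geq 2$ is immediate. All explicit divisors and the corresponding \texttt{Magma} verifications are recorded in the repository referenced in the introduction.
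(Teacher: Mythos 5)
Your proposal matches the paper's proof in all essentials: the paper likewise finds quadratic points by intersecting the canonical model with rational hyperplanes (using the observation from Box that the first three coordinates of a quadratic point are linearly dependent, and \texttt{PointsOverSplittingField()}), then builds degree-$4$ rational divisors of the form $P_1+P_2+Q+\sigma(Q)$ and verifies $\ell(D)=2$ in \texttt{Magma}, with the lower bound coming from the known non-hyperelliptic/non-trigonal classification. The only cosmetic difference is that the paper records that all ten genus-$5$ curves plus the genus-$7$ curve $X_0^*(423)$ succeed with a $1+1+2$ divisor, whereas you also allow sums of two quadratic divisors; this extra generality is not needed.
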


\begin{proof}
    The curve $X_0^*(423)$ has genus $7$ (and $\beta_{2,2}=9$), other curves in the list have genus $5$. For these levels there is no rational divisor of the form $1+1+1+1$ and Riemann-Roch dimension $\geq2$. Therefore, we had to search for higher degree points.

    We searched for quadratic points $(x_0,\ldots,x_k)$ by intersecting the curve $X_0^*(N)$ with hyperplanes of the form
    \[b_0x_0+b_1x_1+b_2x_2=0,\]
    where $b_0,\ldots,b_k\in \Z$ are coprime and chosen up to a certain bound, a similar idea as in \cite[Section 3.2]{Box19}. The main idea is that the first $3$ coordinates of a quadratic points must be linearly dependent over $\Z$. 
    
    We constructed a scheme from the intersection of $X_0^*(N)$ with hyperplanes $b_0x_0+b_1x_1+b_2x_2=0$ and searched for points using the function PointsOverSplittingField(). After that, we input these points and construct degree $4$ divisors out of them, as in Proposition \ref{pointsearch}.  More details are in the code.
     
    In all of these cases we found a function of degree $4$ lying in the Riemann-Roch space of a divisor of the form $1+1+2$, that is, $P_1+P_2+Q+\sigma(Q)$, where $P_1,P_2\in X_0^*(N)(\Q)$, and $Q$ is one of the quadratic points we found.
\end{proof}

We now move on to the morphisms from $X_0^*(N)$ induced by involutions.

\begin{prop}[{\cite[Proposition 1]{Hasegawa1997}, \cite[Proposition 4.15]{bars22biellipticquotients}}]\label{v2prop}
    Let $N$ be an integer divisible by $8$ and let $2^\nu \mid\mid N$. Put $S_\mu=\begin{bmatrix}
        1 & \frac{1}{\mu} \\ 0 & 1
    \end{bmatrix}$. Then $V_2=S_2w_{2^\nu}S_2$ defines an involution on $X_0(N)$ defined over $\Q$ that commutes with all Atkin-Lehner involutions $w_r$.
\end{prop}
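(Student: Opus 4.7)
The plan is to work inside the normalizer of $\Gamma_0(N)$ in $\GL_2^+(\Q)$ and verify each assertion (that $V_2$ is a well-defined involution, that it is $\Q$-rational, and that it commutes with every $w_r$) through explicit $2\times 2$ matrix computations modulo $\Gamma_0(N)$ and scalars. Throughout, I would fix a representative $w_{2^\nu}=\begin{bmatrix}2^\nu a & b \\ Nc & 2^\nu d\end{bmatrix}$ satisfying $2^{2\nu}ad - Nbc = 2^\nu$, so that $N/2^\nu$ is odd.

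First I would multiply out $V_2 = S_2 w_{2^\nu} S_2$ directly, obtaining a matrix whose potentially fractional entries are $Nc/2,\ Nc/4$, and $2^{\nu-1}(a+d)$; the hypothesis $8\mid N$ forces $\nu\geq 3$, so all of these are integers, and $V_2$ is an integral matrix with $\det V_2=2^\nu$ and lower-left entry divisible by $N$. The key structural observation I would establish next is that $S_2$ itself normalizes $\Gamma_0(N)$ once $8\mid N$: for $\gamma=\begin{bmatrix}a' & b' \\ Nc' & d'\end{bmatrix}\in \Gamma_0(N)$, the conjugate $S_2 \gamma S_2^{-1}$ has potentially fractional entries $Nc'/2,\ Nc'/4$, and $(d'-a')/2$, which are integers by $8\mid N$ together with the parity observation $a'\equiv d'\pmod 2$ forced by $a'd'\equiv 1\pmod 2$. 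Since $w_{2^\nu}$ is in the normalizer, so is $V_2$.

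To verify $V_2$ acts as an involution, I would write $V_2^2 = S_2 w_{2^\nu} S_2^2 w_{2^\nu} S_2 = S_2 w_{2^\nu} T w_{2^\nu} S_2$ where $T=S_2^2 \in \Gamma_0(N)$, and reduce modulo $\Gamma_0(N)\cdot\Q^\times$ using the standard Atkin-Lehner facts $w_{2^\nu} T w_{2^\nu}^{-1}\in\Gamma_0(N)$ and $w_{2^\nu}^2\in 2^\nu\cdot\Gamma_0(N)$, together with the normalization of $\Gamma_0(N)$ by $S_2$ from the previous step. The $\Q$-rationality of the induced automorphism is then automatic, since any element of $N(\Gamma_0(N))\subset \GL_2^+(\Q)$ descends to a $\Q$-rational automorphism of $X_0(N)$ by the same mechanism that gives $\Q$-rationality of the $w_r$.

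Finally, for the commutation with a given $w_r$ ($r\|N$), I would split into two cases. When $r$ is odd, the hypothesis gives $8\mid N/r$, and I expect the calculation $w_r^{-1} S_2 w_r S_2^{-1}\in \Gamma_0(N)$ to go through (the oddness of the diagonal entries of $w_r$ combined with $8\mid N/r$ makes each fractional entry integral); combined with the classical Atkin-Lehner relation $w_r w_{2^\nu}\equiv w_{2^\nu} w_r\pmod{\Gamma_0(N)}$, one then sandwiches to obtain $V_2 w_r = S_2 w_{2^\nu} S_2 w_r \equiv w_r S_2 w_{2^\nu} S_2 = w_r V_2$. The case $r=2^\nu$ must be checked by direct expansion, simplifying both $V_2 w_{2^\nu}$ and $w_{2^\nu} V_2$ using $S_2^2=T\in\Gamma_0(N)$ and $w_{2^\nu}^2\equiv 2^\nu\pmod{\Gamma_0(N)}$. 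I expect this last verification to be the main obstacle, since it requires tracking several simultaneous $2$-adic cancellations and is precisely the step where the hypothesis $8\mid N$ (rather than $2\mid N$ or $4\mid N$) is indispensable.
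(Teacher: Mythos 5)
First, note that the paper does not actually prove this proposition: it is imported verbatim from \cite{Hasegawa1997} and \cite{bars22biellipticquotients}, so there is no in-paper argument to compare yours against. Judged on its own, the matrix-algebra skeleton of your proposal is sound: the integrality of $V_2$ (the fractional entries are indeed $Nc/2$, $Nc/4$, $2^{\nu-1}(a+d)$), the fact that $S_2$ normalizes $\Gamma_0(N)$ (your parity observation $a'\equiv d'\pmod 2$ is correct; in fact $4\mid N$ already suffices for this particular step), the reduction of $V_2^2$ into $2^\nu\Gamma_0(N)$ via $S_2^2=T$ and $w_{2^\nu}^2\in 2^\nu\Gamma_0(N)$, and the sandwiching strategy for commutation with the odd $w_r$ are all standard and would go through, modulo the computations you explicitly defer.

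The genuine gap is the $\Q$-rationality step. You assert that rationality ``is automatic, since any element of $N(\Gamma_0(N))\subset\GL_2^+(\Q)$ descends to a $\Q$-rational automorphism of $X_0(N)$.'' This is false, and the paper itself contains the counterexample: $V_3=S_3w_9S_3^2$ of Proposition \ref{v3prop} is likewise a word in rational matrices normalizing $\Gamma_0(N)$, yet the induced involution is in general only defined over $\Q(\sqrt{-3})$. The field of definition of an automorphism induced by the normalizer is not controlled by the field generated by the matrix entries; it has to be read off, for instance, from the action on $q$-expansions at the rational cusp $\infty$ together with the $q$-expansion principle. Concretely, $S_\mu:\tau\mapsto\tau+1/\mu$ sends $q=e^{2\pi i\tau}$ to $\zeta_\mu q$ and hence multiplies $q$-expansion coefficients by powers of $\zeta_\mu$; for $\mu=2$ these factors are $\pm1$, so rationality of coefficients is preserved (and $w_{2^\nu}$ also preserves rationality of $q$-expansions), which is what actually makes $V_2$ rational, whereas for $\mu=3$ genuine cube roots of unity appear, which is exactly why $V_3$ is only $\Q(\sqrt{-3})$-rational. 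Without an argument of this kind (or a moduli-theoretic substitute), the rationality claim --- which is the only non-routine content of the proposition --- remains unproved. The deferred case $r=2^\nu$ of the commutation is a finite check and not a structural worry; the rationality step is the conceptual hole.
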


Hence, if $8\mid N$, then $V_2$ is an involution of $X_0^*(N)$ defined over $\Q$. Since we can construct degree $2$ morphisms from involutions, we have a degree $2$ rational map $X_0^*(N)\to X_0^*(N)/\left<V_2\right>$.

\begin{prop}[{\cite[Proposition 1]{Hasegawa1997}, \cite[Proposition 4.15]{bars22biellipticquotients}}]\label{v3prop}
    Let $N$ be an integer such that $9 \mid\mid N$. Put $S_\mu=\begin{bmatrix}
        1 & \frac{1}{\mu} \\ 0 & 1
    \end{bmatrix}$. Then $V_3=S_3w_9S_3^2$ defines an involution on $X_0(N)$ and $X_0^*(N)$ defined over $\Q(\sqrt{-3})$. Morover. if $V_3$ is an involution of $X_0(N)/W$, then it is defined over $\Q$ if and only if $w_9\in W$.
\end{prop}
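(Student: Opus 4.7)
The plan is to mirror the proof of Proposition \ref{v2prop} for $V_2$, since the construction is entirely analogous. The argument breaks into four parts: (a) realize $V_3$ as a matrix in $\GL_2^+(\Q)$; (b) verify that $V_3$ lies in the normalizer of $\Gamma_0(N)$ modulo scalars, so that it descends to an automorphism of $X_0(N)$; (c) show that $V_3^2$ acts trivially on $X_0(N)$, so $V_3$ is an involution; and (d) determine the field of definition from the action on $q$-expansions.

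For (a)--(c), I would write $N = 9M$ with $\gcd(M,9) = 1$ and fix a representative $w_9 = \begin{pmatrix} 9a & b \\ 9Mc & 9d \end{pmatrix}$ with $9ad - Mbc = 1$, then compute explicitly
\[
V_3 = S_3 w_9 S_3^2 = \begin{pmatrix} 9a + 3Mc & 6a + 2Mc + b + 3d \\ 9Mc & 9d + 6Mc \end{pmatrix},
\]
a matrix of determinant $9$ with integer entries and with lower-left entry divisible by $N$. One then checks by conjugating the standard generators of $\Gamma_0(N)$ that $V_3$ normalizes $\Gamma_0(N)$ in $\GL_2^+(\Q)$ modulo scalars. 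For the involution property, the key relations are $S_3^3 = \begin{pmatrix} 1 & 1 \\ 0 & 1 \end{pmatrix} \in \Gamma_0(N)$ and $w_9^2 \in 9 \cdot \Gamma_0(N)$, which let me rewrite
\[
V_3^2 \;=\; S_3\, w_9\, S_3^3\, w_9\, S_3^2 \;=\; S_3\,(w_9 S_3^3 w_9^{-1})\, w_9^2\, S_3^2
\]
as a scalar multiple of an element of $\Gamma_0(N)$, hence trivial on $X_0(N)$. That $V_3$ commutes with every Atkin-Lehner involution modulo $\Gamma_0(N)$ is another direct matrix check, which then gives the descent to an involution of $X_0^*(N)$.

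For (d), I would pass to the $q$-expansion viewpoint: $S_3$ sends $\tau \mapsto \tau + \tfrac13$, so on $q = e^{2\pi i \tau}$ it is multiplication by the primitive cube root of unity $\zeta_3 \in \Q(\sqrt{-3})$, while $w_9$ is $\Q$-rational. Hence $V_3$ is defined over $\Q(\sqrt{-3})$. For the descent criterion, let $\sigma$ be the nontrivial element of $\Gal(\Q(\sqrt{-3})/\Q)$, which sends $\zeta_3 \mapsto \zeta_3^{-1}$ and, at the matrix level, $S_3 \mapsto S_3^{-1}$. Tracking this through gives $\sigma(V_3) \equiv S_3^{-1} w_9 S_3^{-2} \pmod{\Gamma_0(N)}$, and a short computation with the commutator $[w_9, S_3]$ modulo $\Gamma_0(N)$ simplifies this to $\sigma(V_3) \equiv w_9 \cdot V_3 \pmod{\Gamma_0(N)}$. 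Thus $V_3$ and $\sigma(V_3)$ induce the same automorphism of $X_0(N)/W$ precisely when $w_9 \in W$, which is the claimed $\Q$-rationality criterion.

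The main obstacle is the final identity $\sigma(V_3) \equiv w_9 V_3 \pmod{\Gamma_0(N)}$: this forces one to compute the commutator of $w_9$ and $S_3$ explicitly and carefully track which corrections arise from $\Gamma_0(N)$ and which from scalar factors. Conceptually it reflects the fact that $w_9$ inverts the local cube root of unity at $3$. Once this identity is pinned down, everything else is routine matrix bookkeeping, and the statement follows the same template as the $V_2$ case.
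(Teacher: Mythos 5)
The paper gives no proof of this proposition at all --- it is quoted with citations to \cite[Proposition 1]{Hasegawa1997} and \cite[Proposition 4.15]{bars22biellipticquotients} --- so you are supplying an argument where the paper only supplies a reference. Your template is the right one, and the parts I can check compute correctly: your matrix for $V_3$ is right, $V_3^2=9\gamma$ with $\gamma\in\Gamma_0(N)$ follows cleanly from $\operatorname{tr}(V_3)=9(a+Mc+d)$, the normalization of $\Gamma_0(N)$ works (the needed divisibilities use that $x\equiv w\pmod 3$ for $\left(\begin{smallmatrix}x&y\\Nz&w\end{smallmatrix}\right)\in\Gamma_0(9M)$), and the key identity $S_3^{-1}w_9S_3^{-2}\equiv w_9V_3$ modulo $\Q^*\Gamma_0(N)$ does hold (I verified it explicitly for $N=18$).

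There is, however, a genuine error in step (c): $V_3$ does \emph{not} commute with every Atkin--Lehner involution modulo $\Gamma_0(N)$, so the ``direct matrix check'' you invoke cannot succeed. For instance on $X_0(18)$ one finds $w_2V_3(V_3w_2)^{-1}\notin\pm\Gamma_0(18)$, and in fact $V_3w_2V_3^{-1}\equiv w_{18}=w_2w_9$. The paper itself flags exactly this immediately after the proposition (``$V_3$ does not necessarily commute with all Atkin-Lehner involutions $w_r$. Thus it does not need to be an involution on all quotients of $X_0(N)$''), which is why the last sentence of the statement is conditional. What is true, and what you actually need for the descent to $X_0^*(N)$, is that $V_3$ \emph{normalizes} the full group $B(N)\Gamma_0(N)$, conjugating each $w_r$ to $w_r$ or to $w_rw_9$; that is the statement to prove in place of commutativity. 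Two smaller gaps: the identification ${}^\sigma V_3=S_3^{-1}w_9S_3^{-2}$ is asserted from the heuristic ``$\sigma$ sends $S_3\mapsto S_3^{-1}$,'' but $S_3$ alone does not normalize $\Gamma_0(N)$ and so is not an automorphism on which $\Gal(\overline{\Q}/\Q)$ acts; making this rigorous requires the $q$-expansion principle at the cusp $\infty$ (or Shimura reciprocity) applied to the whole product $V_3$, and one must check that replacing $S_3^{-1}$ by $S_3^{2}$ (which differ by $T\in\Gamma_0(N)$ sitting in the middle of the word) does not change the induced automorphism. Finally, the ``only if'' direction of your last step tacitly uses that $\bar w_9$ is a nontrivial automorphism of $X_0(N)/W$ whenever $w_9\notin W$, i.e.\ faithfulness of $B(N)/W$ on the quotient; this should be stated.
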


Hence, if $9\mid\mid N$, then $V_3$ is an involution of $X_0^*(N)$ defined over $\Q$ and have a degree $2$ rational map $X_0^*(N)\to X_0^*(N)/\left<V_3\right>$. A noticeable difference from $V_2$ is that $V_3$ does not necessarily commute with all Atkin-Lehner involutions $w_r$. Thus it does not need to be an involution on all quotients of $X_0(N)$.

We can compute the genera of these quotients by counting the newforms $f$ with $A_f$ in the decomposition of the Jacobian $J_0^*(N)$. We use the following results. Lemma \ref{lift_newform_lemma} describes how to lift newforms to a higher level with desired Atkin-Lehner action and Propositions \ref{v2newform_prop}, \ref{v3newform_prop} tell us how to calculate the genera of quotients $X_0^*(N)/\left<V_2\right>, X_0^*(N)/\left<V_3\right>$.

\begin{lem}[{\cite[Lemma 1]{Hasegawa1997}}]\label{lift_newform_lemma}
    Let $M$ be a positive integer. Let $M_0$ be a positive divisor of $M$ and let $d$ be a positive divisor of $\frac{M}{M_0}$. For a prime divisor $p$ of $M$ define integers $\alpha,\beta,\gamma$ by \[p^\alpha \mid\mid M, \ p^{\alpha-\beta}\mid\mid M', \ p^\gamma\mid\mid d.\] If $f(\tau)\in S_2^0(M')$ is a newform on $\Gamma_0(M')$, then $f$ lifts to an eigenform for all $w_m^M$ with $m\mid\mid M$. In particular, if $f| w_{p^{\alpha-\beta}}^{M'}=\lambda_p'f \ (=\pm f)$ and if $\beta>2\gamma$ (resp. $\beta=2\gamma$), then \[f(d\tau)\pm p^{\beta-2\gamma}\lambda_p'f(p^{\beta-2\gamma}d\tau) \ (\textup{resp. } f(d\tau))\] becomes an eigenform for $w_{p^{\alpha-\beta}}^M$ with eigenvalue $\pm1$ (resp. $\lambda_p'$).
\end{lem}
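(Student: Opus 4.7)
The plan is to reduce to a single prime at a time and then compute the action of $w_m^M$ on the span of oldform lifts by explicit matrix manipulation. Since Atkin--Lehner involutions $w_m^M$ and $w_{m'}^M$ commute whenever $(m,m')=1$ and since each acts diagonally with respect to the factorization of $M$ into coprime prime-power parts, it suffices to fix a single prime $p$ with $p^\alpha\mid\mid M$, $p^{\alpha-\beta}\mid\mid M'$, $p^\gamma\mid\mid d$, and analyze the action of $w_{p^\alpha}^M$ on the oldform space generated by translates $f(p^j\tau)$ for $0\le j\le \beta$. The forms $f(d'\tau)$ for $d'\mid M/M'$ at the other primes just come along for the ride and determine by which powers we must rescale.

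Next I would write down a representative matrix for the Atkin--Lehner involution, namely $W_{p^\alpha}^M=\begin{pmatrix}p^\alpha a & b\\ Mc & p^\alpha d\end{pmatrix}$ of determinant $p^\alpha$ with $a,b,c,d\in\Z$ chosen so the matrix lies in the correct coset, and similarly $W_{p^{\alpha-\beta}}^{M'}$ at level $M'$. The degeneracy map sending a form at level $M'$ to a form at level $M$ is $B_{p^j}=\begin{pmatrix}p^j&0\\0&1\end{pmatrix}$, i.e.\ $f\mid_2 B_{p^j}$ is (up to a normalizing constant $p^j$) the form $f(p^j\tau)$. The key computation is to decompose the product $B_{p^j}\cdot W_{p^\alpha}^M$ as $W_{p^{\alpha-\beta}}^{M'}\cdot B_{p^{\beta-j}}\cdot(\text{an element of }\Gamma_0(M'))$, which is possible by a direct row/column reduction using the hypotheses on divisibility. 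From this one reads off
\[
f(p^j\tau)\mid W_{p^\alpha}^M \;=\; p^{\,?}\,\lambda_p'\,f(p^{\beta-j}\tau),
\]
with an explicit power of $p$ as normalization, so that the restriction of $w_{p^\alpha}^M$ to the oldform span is anti-diagonal in the basis $\{f(p^j\tau)\}_{0\le j\le\beta}$.

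Once this matrix is in hand, the eigenvector problem is trivial: pair off indices $j$ and $\beta-j$. For $j\ne \beta-j$, i.e.\ $2j\ne\beta$, the two-dimensional block swaps $f(p^j\tau)\leftrightarrow cf(p^{\beta-j}\tau)$ for an explicit scalar $c=p^{\beta-2j}\lambda_p'$, and its $\pm1$-eigenvectors are $f(p^j\tau)\pm p^{\beta-2j}\lambda_p'f(p^{\beta-j}\tau)$; taking $j=\gamma$ and reinserting the $d$-twist at the other primes recovers the first stated formula. For $2j=\beta$, the action is a $1{\times}1$ block equal to $\lambda_p'$, which gives the second case $f(d\tau)$ (with $d$ of $p$-part $p^{\beta/2}$). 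Extending from the $p$-component back to the whole $d$ yields the lemma.

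The main obstacle is purely bookkeeping: keeping track of the normalization factors from the weight-$2$ slash action $\mid_2$, making sure the auxiliary matrix used to right-adjust into $\Gamma_0(M')$ does not affect the value of $f$ (which is where newness and the hypothesis $f\mid W_{p^{\alpha-\beta}}^{M'}=\lambda_p'f$ are used), and correctly distinguishing the generic case $\beta>2\gamma$ from the boundary case $\beta=2\gamma$. Otherwise the argument is entirely mechanical and, as in the cited Hasegawa paper, reduces to a short matrix calculation once the framework is set up.
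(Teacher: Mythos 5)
Your proposal is correct: the paper offers no proof of this lemma, simply citing Hasegawa's Lemma~1, and your matrix decomposition $B_{p^j}W_{p^\alpha}^{M} = (\text{scalar})\cdot W_{p^{\alpha-\beta}}^{M'}B_{p^{\beta-j}}\cdot\gamma$ with $\gamma\in\Gamma_0(M')$, followed by reading off the anti-diagonal action $f^{(p^j)}\mid w_{p^\alpha}^{M}=p^{\beta-2j}\lambda_p'f^{(p^{\beta-j})}$ and diagonalizing the $2\times2$ (or $1\times1$) blocks, is exactly the argument in that source. The normalizations you flag as bookkeeping do work out ($c_jc_{\beta-j}=1$, so the eigenvectors and eigenvalues are as stated).
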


Now we will consider maps $X_0^*(N)\to X_0^*(N)/\left<V_2\right>, X_0^*(N)/\left<V_3\right>$. For simplicity we will write $f^{(d)}(\tau)=f(d\tau)$ in the following results.

\begin{prop}[{\cite[Proposition 2]{Hasegawa1997}}]\label{v2newform_prop}
    Let $N$ be a positive integer such that $8\mid N$. Let $N'$ be a positive divisor of $N$ and let $d$ be a positive divisor of $\frac{N}{N'}$. Define integers $\alpha,\beta,\gamma$ by \[2^\alpha \mid\mid N, \ 2^{\alpha-\beta}\mid\mid N', \ 2^\gamma\mid\mid d\] so that $N=2^\alpha M$ and $N'=2^{\alpha-\beta}M'$ for odd integers $M'\mid M$- Let $f$ be a newform on $\Gamma_0(N')$ such that $f| w_{2^{\alpha-\beta}}^{N'}=\lambda f$, and put \[g^{(d)}=f^{(d)}\pm 2^{\beta-2\gamma}\lambda f^{(2^{\beta-2\gamma}d)}.\]
    \begin{enumerate}[(i)]
        \item If $\alpha-\beta\geq2$, then \[\begin{cases}
            g^{(d)}| V_2=-g^{(d)} & \textup{if } \beta>\gamma=0,\\
            g^{(d)}| V_2=+g^{(d)} & \textup{if } \beta-\gamma>\gamma>0,\\
            f^{(d)}| V_2=\lambda f^{(d)} & \textup{if } \beta=2\gamma.
            \end{cases}\]
        \item If $\alpha-\beta=1$, then \[\begin{cases}
            (g^{(d)}+\lambda g^{(2d)})| V_2=-(g^{(d)}+\lambda g^{(2d)}) & \textup{if } \beta>\gamma=0,\\
            g^{(d)}| V_2=+g^{(d)} & \textup{if } \beta-\gamma>\gamma>0,\\
            f^{(d)}| V_2=\lambda f^{(d)} & \textup{if } \beta=2\gamma.
            \end{cases}\]
    \end{enumerate}
\end{prop}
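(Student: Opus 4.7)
The strategy is a direct slash-operator computation. Fix an integer matrix representative
\[
W_{2^\alpha} = \begin{pmatrix} 2^\alpha a & b \\ Nc & 2^\alpha d' \end{pmatrix}, \qquad \det W_{2^\alpha} = 2^\alpha,
\]
for the Atkin--Lehner involution $w_{2^\alpha}$ on $X_0(N)$, and write out $V_2 = S_2 W_{2^\alpha} S_2 \in \GL_2^+(\Q)$ explicitly. A preliminary observation is that $S_2^2 = \bigl(\begin{smallmatrix} 1 & 1 \\ 0 & 1 \end{smallmatrix}\bigr) \in \Gamma_0(N)$ acts trivially in weight $2$; what remains of the two $S_2$-translations is a single shift $\tau \mapsto \tau + 1/2$, which on a $q$-expansion $\sum a_n q^n$ multiplies $a_n$ by $(-1)^n$. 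This is the arithmetic source of every sign that appears in the statement.

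The core step is to compute $(f^{(d)})|_2 V_2$ by writing $f^{(d)} = d\cdot f|_2\,\mathrm{diag}(d,1)$ and reorganising the matrix product as
\[
\mathrm{diag}(d,1) \cdot V_2 \;=\; \sigma \cdot \mathrm{diag}(2^{\beta-2\gamma}d,\,1) \cdot W_{2^{\alpha-\beta}}^{N'} \cdot T,
\]
where $\sigma$ lies in $\Gamma_0(N')$ (and therefore fixes $f$ by Lemma \ref{lift_newform_lemma}) and $T$ is either trivial or the translation by $1/2$. Invoking the hypothesis $f|W_{2^{\alpha-\beta}}^{N'} = \lambda f$ converts the slash action into an explicit linear combination of $f^{(d)}$ and $f^{(2^{\beta-2\gamma}d)}$, possibly twisted by the sign $(-1)^n$ coming from $T$. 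The combinations $g^{(d)}$ in the statement are then exactly the eigenvectors that diagonalise this two-term action, and the factor $2^{\beta-2\gamma}$ in their definition is the determinant scalar tracking the difference between $\mathrm{diag}(d,1)$ and $\mathrm{diag}(2^{\beta-2\gamma}d,1)$.

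The case split in (i) is now arithmetic. When $\beta = 2\gamma$, the diagonal matrices coincide and the translations are absorbed into $\Gamma_0(N)$, so $f^{(d)}$ itself is a $\lambda$-eigenvector. When $\beta > \gamma = 0$, $d$ is odd and the $1/2$-translation produces genuine $(-1)^n$ factors, so one must form $g^{(d)}$ to isolate the $(-1)$-eigenspace. In the intermediate range $\beta-\gamma > \gamma > 0$ the sign of the translation becomes trivial (because $d$ is already even enough), and $g^{(d)}$ sits in the $(+1)$-eigenspace. Part (ii) differs only in that $\alpha - \beta = 1$ makes $w_2^{N'}$ the relevant Atkin--Lehner operator; Lemma \ref{lift_newform_lemma} then forces an additional averaging over the two lifts $d$ and $2d$, which is precisely why one must combine $g^{(d)} + \lambda g^{(2d)}$ in that subcase.

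The principal obstacle is bookkeeping rather than ideas: one has to verify in every subcase that the auxiliary matrix $\sigma$ genuinely lies in $\Gamma_0(N')$, that the $2$-adic factors produced by the weight-$2$ slash-operator determinants are tracked consistently through the three multiplications, and that the residual $S_2$-translations do not escape $\Gamma_0(N)$ after conjugation by $\mathrm{diag}(d,1)$. These checks are elementary but delicate, and they explain why the statement naturally trifurcates along the threshold $\beta = 2\gamma$ and bifurcates further according to the parity of $\alpha - \beta$.
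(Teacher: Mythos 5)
The paper does not prove this proposition at all: it is quoted verbatim from \cite[Proposition 2]{Hasegawa1997}, so there is no internal argument to compare against. Your outline is nonetheless the standard (and essentially the cited source's) route: unfold $V_2=S_2W_{2^\alpha}S_2$, push $\mathrm{diag}(d,1)$ through the product, reduce to the Atkin--Lehner hypothesis $f|W_{2^{\alpha-\beta}}^{N'}=\lambda f$, and read off the signs from the residual translation $\tau\mapsto\tau+1/2$, which multiplies $a_n$ by $(-1)^n$. That is the right skeleton, and your identification of the translation as the source of all signs is correct.

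However, as written the proposal has a concrete defect and a substantive omission. The central identity
\[
\mathrm{diag}(d,1)\cdot V_2=\sigma\cdot\mathrm{diag}(2^{\beta-2\gamma}d,1)\cdot W_{2^{\alpha-\beta}}^{N'}\cdot T
\]
cannot hold with $\sigma\in\Gamma_0(N')$ and $T$ unimodular: the left side has determinant $d\cdot2^{\alpha}$ while the right side has determinant $d\cdot2^{\alpha-2\gamma}$, so a scalar factor $2^{\gamma}I$ is missing whenever $\gamma>0$ (harmless in weight $2$ once inserted, but the identity you state is false as it stands; likewise $f^{(d)}$ is $d^{-1}f|_2\mathrm{diag}(d,1)$, not $d\cdot f|_2\mathrm{diag}(d,1)$, and the invariance of $f$ under $\sigma$ is just modularity on $\Gamma_0(N')$, not Lemma \ref{lift_newform_lemma}). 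More importantly, the entire content of the proposition — that $\sigma$ really lands in $\Gamma_0(N')$ in each of the six subcases, that the eigenvalue is $-1$ for \emph{both} choices of sign in $g^{(d)}$ when $\beta>\gamma=0$, and that the case $\alpha-\beta=1$ genuinely requires the extra combination $g^{(d)}+\lambda g^{(2d)}$ rather than $g^{(d)}$ alone — is exactly the ``bookkeeping'' you defer. Since those verifications are where the stated signs come from, the proposal is a correct strategy sketch but not yet a proof; to complete it you would need to carry out the matrix decomposition explicitly in each range of $(\beta,\gamma)$ as Hasegawa does.
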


\begin{prop}[{\cite[Proposition 3]{Hasegawa1997}}]\label{v3newform_prop}
    Let $N=9M$ be a positive integer such that $3\nmid M$. Let $M'$ be a positive divisor of $M$ and let $d$ be a positive divisor of $\frac{M}{M'}$.
    \begin{enumerate}[(i)]
        \item Let $f$ be a newform on $\Gamma_0(9M)$ such that $f|w_9^{9M'}=+f$. Then $f^{(d)}$ is an eigenform of $V_3$ with eigenvalue $+1$.
        \item Let $f$ be a newform on $\Gamma_0(3M)$ such that $f|w_3^{3M'}=\lambda f$. Then $f^{(d)}+3\lambda f^{(3d)}$ is an eigenform of $V_3$ with eigenvalue $-1$.
    \end{enumerate}
\end{prop}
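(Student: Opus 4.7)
The plan is to prove both parts by directly evaluating $V_3 = S_3 w_9 S_3^2$ on the specified lifted forms, using that $S_\mu \in \SL_2(\R)$ acts on a weight-$2$ form with trivial automorphy factor, so $(f \mid S_\mu)(\tau) = f(\tau + 1/\mu)$. Applied to $f^{(d)}(\tau) = f(d\tau)$, this twists the $n$-th Fourier coefficient by $e^{2\pi i n d/\mu}$. With $\mu = 3$ and $3 \nmid M$ (hence $3 \nmid d$ for any $d \mid M/M'$), these twists are nontrivial cube roots of unity, and this is the essential arithmetic input that drives the signs in both parts.

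For part (i), Lemma \ref{lift_newform_lemma} at $p = 3$ applies with $\alpha = 2$, $\alpha - \beta = 2$, and $\gamma = 0$, so $\beta = 0 = 2\gamma$, placing us in the last subcase of the lemma: $f^{(d)}$ itself is a $w_9^{9M}$-eigenform with eigenvalue $+1$, inherited from $f \mid w_9^{9M'} = +f$. I would then compute $f^{(d)} \mid V_3$ step by step, applying $S_3^2$ (shift by $2/3$), then $w_9$, then $S_3$ (shift by $1/3$). The central matrix identity to exploit is that conjugating the Atkin-Lehner matrix for $w_9$ by the translations $S_3$ and $S_3^2$ produces another matrix in the same Atkin-Lehner coset at level $9M$, whose twist on $f^{(d)}$ cancels the cube-root-of-unity factors introduced by the outer translations. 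Careful bookkeeping of Fourier coefficients then yields $f^{(d)} \mid V_3 = +f^{(d)}$.

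For part (ii), the starting newform lives at level $3M'$, and the parameters in Lemma \ref{lift_newform_lemma} become $\alpha = 2$, $\alpha - \beta = 1$, $\gamma = 0$, so $\beta = 1 > 2\gamma$: the lifted $w_9^{9M}$-eigenform at level $9M$ is $f^{(d)} \pm 3\lambda f^{(3d)}$. I would apply $V_3$ directly to $g := f^{(d)} + 3\lambda f^{(3d)}$. Under $S_3^2$ the summand $f^{(d)}$ picks up $\zeta_3^{2nd}$-coefficient twists while $f^{(3d)}$ is invariant (since $3 \mid 3d$), so after $S_3^2$ the operator $w_9$ acts on a genuine superposition; the action of $w_9$ on each piece is computed using the $w_3^{3M'}$-eigenvalue $\lambda$ of $f$ together with the standard change-of-level formula relating $w_3^{3M'}$ and $w_9^{9M}$. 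After a final $S_3$-shift, the two pieces recombine, with the factor of $3$ in front of $f^{(3d)}$ precisely absorbing the determinant-$3$ normalization of the $w_9$ matrix relative to $w_3$, and the combined form equals $-g$.

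The main obstacle will be the bookkeeping of Atkin-Lehner normalizations at the four relevant levels $3M'$, $9M'$, $3M$, $9M$ together with the commutation relations between $S_3$ and $w_9$: each Atkin-Lehner matrix has determinant a power of $3$ that contributes weight-$2$ normalization factors $(c\tau + d)^{-2}$, and each conjugation $S_3^{-1} w_9 S_3$ modifies the lower row of the Atkin-Lehner matrix in a controlled way. This parallels the proof of Proposition \ref{v2newform_prop} for $V_2$ and follows the template of \cite[Propositions 2 and 3]{Hasegawa1997}; the computations are elementary but notationally dense, and precise tracking is needed to land on $+1$ in part (i), on $-1$ in part (ii), and on the exact constant $3\lambda$ in the latter.
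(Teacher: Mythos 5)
This statement is quoted verbatim from \cite[Proposition 3]{Hasegawa1997} and the paper supplies no proof of its own, so there is no argument in the paper to compare against; your proposal has to stand on its own as a reconstruction of Hasegawa's computation. The easy half of your plan is correct: the identification of the relevant lifts via Lemma \ref{lift_newform_lemma} is right in both parts (in (i), $\alpha=2$, $\beta=0=2\gamma$, so $f^{(d)}$ inherits the $w_9$-eigenvalue $+1$; in (ii), $\alpha=2$, $\beta=1>2\gamma=0$, giving the combination $f^{(d)}\pm 3\lambda f^{(3d)}$), and the observation that $S_3$ twists the $n$-th coefficient of $f^{(d)}$ by $e^{2\pi i nd/3}$ with $3\nmid d$ is the correct starting point.

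The gap is in the step that actually carries the content of the proposition. Your ``central matrix identity'' --- that conjugating a $w_9$-matrix by $S_3$, $S_3^2$ lands back in the same Atkin--Lehner coset at level $9M$, so that the root-of-unity twists cancel --- is false. Take $W=\left(\begin{smallmatrix}9a & b\\ 9Mc & 9e\end{smallmatrix}\right)$ with $\det W=9$, i.e.\ $9ae-Mbc=1$; reducing mod $3$ forces $3\nmid Mbc$, and then
$S_3WS_3^2=\left(\begin{smallmatrix}3(3a+Mc) & 6a+2Mc+b+3e\\ 9Mc & 3(2Mc+3e)\end{smallmatrix}\right)$
has diagonal entries \emph{exactly} divisible by $3$, never by $9$. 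So $V_3$ is not represented by any matrix of $w_9$-type, no choice of representative fixes this, and the envisioned coefficientwise cancellation does not occur; this is precisely why $V_3$ is a genuinely new involution (and why, as the paper notes after Proposition \ref{v3prop}, it fails to commute with the $w_r$ in general). Consequently the assertions ``careful bookkeeping then yields $+f^{(d)}$'' and ``the combined form equals $-g$'' are exactly the statements to be proved, and the mechanism you propose for proving them breaks at the first step. The actual argument must track how $w_9$ interacts with the decomposition of $f^{(d)}$ by residue classes of $n$ modulo $3$ (equivalently, with the quadratic twist mod $3$), using for instance that $a_n(f)=0$ whenever $3\mid n$ for the level-$9M'$ newform in part (i), together with the Atkin--Lehner--Li relations between a newform, its twist, and their pseudo-eigenvalues; alternatively one simply defers to Hasegawa's published computation, as the paper does. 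As written, your proof would not go through.
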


\begin{ex}[{\cite[Example 2]{Hasegawa1997}}]\label{v2example}
    Let $n=360=8\cdot9\cdot5$. A basis of $S_2^*(360)$ is given by (the newforms are given with their LMFDB labels \cite{lmfdb}) \begin{align*}
        &g_1^{(1)}, \ g_1^{(3)}, \textup{ where } f_1=20.2.a.a,\\
        &g_2^{(1)}, \ g_2^{(2)}, \textup{ where } f_2=30.2.a.a,\\
        &g_3^{(1)}, \textup{ where } f_3=36.2.a.a,\\
        &g_4^{(1)}, \textup{ where } f_4=90.2.a.b,\\
        &g_5^{(1)}, \textup{ where } f_5=120.2.a.a.
    \end{align*}
    We can check that these newforms have the correct Atkin-Lehner signs on \begin{center}
\url{https://www.lmfdb.org/ModularForm/GL2/Q/holomorphic/?level_type=divides&level=360&weight=2&char_order=1}
    \end{center}
    Thus $g(X_0^*(360))=7$. By Proposition \ref{v2newform_prop} we can see that only $g_2^{(2)}$ and $g_5^{(1)}$ form a basis of $S_2^*(360)^{V2}$. Therefore the curve $X_0^*(360)/\left<V_2\right>$ is of genus $2$.
\end{ex}

\begin{ex}\label{v3example}
    Let $n=414=2\cdot9\cdot23$. A basis of $S_2^*(414)$ is given by (the newforms are given with their LMFDB labels \cite{lmfdb}) \begin{align*}
        &g_1^{(1)}, \textup{ where } f_1=69.2.a.a,\\
        &g_2^{(1)}, \textup{ where } f_2=138.2.a.a,\\
        &g_3^{(1)}, \textup{ where } f_3=138.2.a.b,\\
        &g_4^{(1)}, \textup{ where } f_4=207.2.a.b \  (\dim=2).
    \end{align*}
    We can check that these newforms have the correct Atkin-Lehner signs on \begin{center}
\url{https://www.lmfdb.org/ModularForm/GL2/Q/holomorphic/?level_type=divides&level=414&weight=2&char_order=1}
    \end{center}
    
    Thus $g(X_0^*(360))=5$. The newforms of level $3\mid\mid M$ will never lift to eigenforms with eigenvalue $+1$ for $V_3$ because of Proposition \ref{v3newform_prop}. Proposition \ref{v3newform_prop} also tells us that, due to action of $w_9$, the two eigenforms $f_4^{(1)}$ form a basis for $S_2^*(414)^{V2}$. Therefore the curve $X_0^*(414)/\left<V_3\right>$ is of genus $2$.
\end{ex}

\begin{prop}\label{v2levels}
    The curve $X_0^*(N)$ has $\Q$ and $\C$-gonality equal to $4$ for the following $14$ values of $N$. Here $\dim$ denotes the dimensions of newforms in the table.
    \begin{center}
\begin{longtable}{|c|c|c||c|c|c|}

\hline
\addtocounter{table}{-1}
$N$ & newforms for $V_2$ & $\dim$ & $N$ & newforms for $V_2$ & $\dim$\\
    \hline

$192$ & 24.2.a.a, 192.2.a.a & $1,1$ & $208$ & 26.2.a.b, 208.2.a.b & $1,1$\\
\hline
$216$ & 72.2.a.a, 216.2.a.a & $1,1$ & $232$ & 58.2.a.a, 232.2.a.a & $1,1$\\
\hline
$272$ & 34.2.a.a, 272.2.a.a & $1,1$ & $296$ & 37.2.a.a, 296.2.a.a & $1,1$\\
\hline
$328$ & 82.2.a.a, 328.2.a.a & $1,1$ & $336$ & 42.2.a.a, 112.2.a.a & $1,1$\\
\hline
$344$ & 43.2.a.a, 344.2.a.b & $1,1$ & $360$ & 30.2.a.a, 120.2.a.a & $1,1$\\
\hline
$376$ & 376.2.a.b & $2$ & $440$ & 88.2.a.a, 440.2.a.a & $1,1$\\
\hline
$456$ & 57.2.a.a, 152.2.a.a & $1,1$ & $520$ & 65.2.a.a, 520.2.a.a & $1,1$\\
\hline

\caption*{}
\end{longtable}
\end{center}
\end{prop}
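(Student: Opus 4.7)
The plan is to construct an explicit degree $4$ $\Q$-rational map $X_0^*(N)\to\PP^1$ for each of the $14$ levels by factoring through the quotient by $V_2$. Since each listed $N$ is divisible by $8$, Proposition \ref{v2prop} gives a $\Q$-rational involution $V_2$ of $X_0^*(N)$, and hence a degree $2$ $\Q$-rational map $\pi\colon X_0^*(N)\to X_0^*(N)/\langle V_2\rangle$. If we can show the quotient has genus $2$, then it is automatically hyperelliptic, and since $X_0^*(N)$ has a rational cusp whose image supplies a rational point on the quotient, the hyperelliptic map is defined over $\Q$. Composing with $\pi$ then yields a degree $4$ $\Q$-rational map to $\PP^1$, giving $\textup{gon}_\Q(X_0^*(N))\leq 4$. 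Combined with the fact that these levels are neither hyperelliptic nor $\Q$-trigonal (they survive the elimination in \Cref{preliminaries_section} and \cite{HasegawaShimura2000}), the $\Q$-gonality, and hence the $\C$-gonality, is exactly $4$.

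The concrete task is therefore to verify, for each $N$ in the table, that $g(X_0^*(N)/\langle V_2\rangle)=2$. The genus equals $\dim S_2^*(N)^{V_2}$, which we compute exactly as in Example \ref{v2example}. First, decompose $S_2^*(N)$ into spaces spanned by lifts $g^{(d)}$ of newforms $f$ on $\Gamma_0(N')$ with $N'\mid N$, using Lemma \ref{lift_newform_lemma} and retaining only those newforms whose Atkin-Lehner eigenvalues are all $+1$ (these are the ones contributing to $J_0^*(N)$). Then, for each prime $2$, apply Proposition \ref{v2newform_prop} with the appropriate exponents $(\alpha,\beta,\gamma)$ determined by $2^\alpha\|N$, $2^{\alpha-\beta}\|N'$, $2^\gamma\|d$ to record whether each basis vector is a $+1$ or $-1$ eigenvector of $V_2$. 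The table entries list exactly those newforms whose lifts to level $N$ yield a $+1$ eigenvector; in each case these contribute total dimension $2$, so the $V_2$-quotient has genus $2$.

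For instance, for $N=192=2^6\cdot 3$ the newforms \texttt{24.2.a.a} (at level $24=2^3\cdot 3$, so $\beta=3,\gamma=0$) and \texttt{192.2.a.a} (at level $192$ itself, with $d=1$) each produce a single $+1$ eigenvector of $V_2$ by case (i) of Proposition \ref{v2newform_prop}; one checks on LMFDB that their $w_q$-eigenvalues are compatible with membership in $S_2^*(192)$. An analogous verification works for the other levels, with Proposition \ref{v2newform_prop} (ii) used for those $N$ with $2\|N/N'$, and with the case $\beta=2\gamma$ accounting for level-$376$ entry where a single two-dimensional newform orbit already gives the whole $V_2$-fixed part.

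The main obstacle is administrative rather than conceptual: for each level one must list all newforms of divisor level contributing to $J_0^*(N)$ (checking every Atkin-Lehner sign is $+1$), enumerate all admissible divisors $d$, and then run through the three subcases of Proposition \ref{v2newform_prop} to count $+1$ eigenvectors. This is streamlined by the \texttt{Magma} routine already used in Proposition \ref{prop_counting_Fpn_points} to decompose $J_0^*(N)$ into newform factors; once the list of newforms is in hand, the Atkin-Lehner eigenvalues and the $V_2$-action are read off directly from LMFDB data, so the verification reduces to a finite check that we record in the table.
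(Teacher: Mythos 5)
Your proposal is correct and follows essentially the same route as the paper: compute via the newform decomposition and Proposition \ref{v2newform_prop} that $X_0^*(N)/\langle V_2\rangle$ has genus $2$, then compose the degree $2$ quotient map with the hyperelliptic map to get a degree $4$ rational morphism to $\PP^1$, and conclude equality from the known non-(hyper)ellipticity and non-trigonality of these levels. The only cosmetic difference is your appeal to a rational cusp to define the hyperelliptic map over $\Q$, which is harmless but unnecessary since for genus $2$ the canonical map already gives a degree $2$ map to $\PP^1$ over $\Q$.
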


\begin{proof}
    For these levels we compute that the genus of $X_0^*(N)/\left<V_2\right>$ is equal to $2$ as in Example \ref{v2example}. The composition map $X_0^*(N)\to X_0^*(N)/\left<V_2\right>\to\PP^1$ is a degree $4$ rational morphism.
\end{proof}

\begin{prop}\label{v3levels}
    The curve $X_0^*(N)$ has $\Q$ and $\C$-gonality equal to $4$ for the following $6$ values of $N$. Here $\dim$ denotes the dimensions of newforms in the table.
    \begin{center}
\begin{longtable}{|c|c|c||c|c|c|}

\hline
\addtocounter{table}{-1}
$N$ & newforms for $V_3$ & $\dim$ & $N$ & newforms for $V_3$ & $\dim$\\
    \hline

$414$ & 207.2.a.b & $2$ & $495$ & 99.2.a.a & $1$\\
\hline
$504$ & 36.2.a.a, 504.2.a.a & $1,1$ & $558$ & 558.2.a.b & $1$\\
\hline
$630$ & 315.2.a.c & $2$ & $990$ & 99.2.a.a, 990.2.a.a & $1,1$\\
\hline

\caption*{}
\end{longtable}
\end{center}
\end{prop}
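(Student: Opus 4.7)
The plan is to follow exactly the same template used in the proof of Proposition \ref{v2levels}, replacing the involution $V_2$ with $V_3$. First I note that for each of the six levels in the table we have $9 \mid\mid N$, namely $414=2\cdot 9\cdot 23$, $495=9\cdot 5\cdot 11$, $504=8\cdot 9\cdot 7$, $558=2\cdot 9\cdot 31$, $630=2\cdot 9\cdot 5\cdot 7$, and $990=2\cdot 9\cdot 5\cdot 11$. By Proposition \ref{v3prop}, because $w_9 \in B(N)$, the involution $V_3$ descends to a $\Q$-rational involution on $X_0^*(N)$, and therefore the quotient map
\[
\pi \colon X_0^*(N) \longrightarrow X_0^*(N)/\langle V_3\rangle
\]
is a degree two morphism defined over $\Q$.

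The central step is to compute $g\bigl(X_0^*(N)/\langle V_3\rangle\bigr)$. Exactly as in Example \ref{v3example}, this amounts to counting, among the eigenforms in $S_2^*(N)$ associated to the newforms listed in the second column, those that are $V_3$-invariant. I would apply Lemma \ref{lift_newform_lemma} to lift each listed newform $f$ at a divisor $M'$ of $N$ to $N$, and then apply Proposition \ref{v3newform_prop} to read off the $V_3$-eigenvalue of each resulting eigenform. Concretely, newforms at levels where $3\mid\mid M'$ never produce $+1$ eigenforms, while newforms at levels with $9\mid M'$ (with the correct $w_9$-sign) or suitable combinations $f^{(d)}+3\lambda f^{(3d)}$ do. Ruling out all other newforms in the decomposition of $J_0^*(N)$ (which together form a basis of $S_2^*(N)$), one obtains that the $V_3$-fixed part of $S_2^*(N)$ is exactly the span described in the table, of dimension $2$ in every case. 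Hence $X_0^*(N)/\langle V_3\rangle$ has genus $2$.

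The conclusion is then immediate: any genus $2$ curve is hyperelliptic over $\Q$, so there is a degree $2$ map $X_0^*(N)/\langle V_3\rangle \to \PP^1$ over $\Q$, and composing with $\pi$ gives a degree $4$ $\Q$-rational morphism $X_0^*(N) \to \PP^1$. By Proposition \ref{poonen}(i) this also bounds the $\C$-gonality by $4$, and by Hasegawa--Shimura's classification \cite{HasegawaShimura2000} each of these curves fails to be trigonal, giving equality $\mathrm{gon}_\Q = \mathrm{gon}_\C = 4$.

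The main obstacle is bookkeeping rather than theory: for the levels with several prime factors (particularly $630$ and $990$) one must carefully enumerate all newforms at divisors $M' \mid N$ with the correct Atkin-Lehner signs so that they contribute to $S_2^*(N)$, then apply the lifting rule of Lemma \ref{lift_newform_lemma} and the case analysis of Proposition \ref{v3newform_prop} to each. This is the step that Example \ref{v3example} illustrates in miniature, and the computation can be cross-checked against the Atkin-Lehner data in the LMFDB, as done there.
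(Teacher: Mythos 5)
Your proposal follows essentially the same route as the paper: show $V_3$ is a $\Q$-rational involution of $X_0^*(N)$ (using $9\mid\mid N$ and $w_9\in B(N)$ via Proposition \ref{v3prop}), compute the genus of $X_0^*(N)/\langle V_3\rangle$ by counting $V_3$-invariant eigenforms with Lemma \ref{lift_newform_lemma} and Proposition \ref{v3newform_prop} as in Example \ref{v3example}, and compose the two degree $2$ maps.

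One slip worth correcting: you assert that the $V_3$-fixed part of $S_2^*(N)$ has dimension $2$ ``in every case,'' so that the quotient always has genus $2$. The table itself contradicts this for $N=495$ and $N=558$, where the listed newform has dimension $1$; there the quotient $X_0^*(N)/\langle V_3\rangle$ has genus $1$, and this is exactly why the paper's proof says the genus is ``$1$ or $2$.'' The argument still closes, but in the genus $1$ case you need one extra observation: the quotient inherits a rational point (the image of a rational cusp of $X_0^*(N)$), hence is an elliptic curve and admits a degree $2$ map to $\PP^1$ over $\Q$ via the linear system $|2P|$. With that patch, and your appeal to the Hasegawa--Shimura classification to rule out gonality $\le 3$, the proof is complete and matches the paper's.
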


\begin{proof}
    For these levels we compute that the genus of $X_0^*(N)/\left<V_3\right>$ is equal to $1$ or $2$ as in Example \ref{v3example}. The composition map $X_0^*(N)\to X_0^*(N)/\left<V_3\right>\to\PP^1$ is a degree $4$ rational morphism.
\end{proof}

When $4\mid\mid N$, the curve $X_0^*(N)$ has a modular involution.

\begin{prop}[{\cite[Proposition 7]{Hasegawa1997}}]\label{prop4n}
    Let $N=4M$ with $M$ being odd. We have the isomorphism \[X_0^*(N)\cong X_0(N)/\left<\{w_{p^r}\}_{p\mid M}\cup S_2\right>\cong X_0(2M)/\left<\{w_{p^r}\}_{p\mid M}\right>.\] The first isomorphism is obtained by conjugating $\Gamma_0(N)$ by $S_2w_4S_2=V_2$ and the second by conjugating $\left<\{w_{p^r}\}_{p\mid M}\cup S_2\cup \Gamma_0(N)\right>$ by $\alpha_2=\begin{bmatrix}
        2 & 0\\ 0 & 1
    \end{bmatrix}$. Thus both isomorphisms are defined over $\Q$.
\end{prop}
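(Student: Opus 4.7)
The plan is to prove both isomorphisms as explicit $\Q$-rational isomorphisms induced by conjugation in $\GL_2^+(\Q)$, the substantive content being a few matrix computations modulo $\Gamma_0(N)$. Throughout I write $N=4M$ with $M$ odd, $B(N)=\langle w_4,\{w_{p^r}\}_{p\mid M}\rangle$ for the full Atkin-Lehner group, and $T=\begin{bmatrix}1&1\\0&1\end{bmatrix}$, noting that $S_2^2=T\in\Gamma_0(N)$.

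For the first isomorphism I would start by verifying directly that $V_2=S_2w_4S_2\in\GL_2^+(\Q)$ normalizes $\Gamma_0(N)$ (a short computation using $4\mid N$), that $V_2^2\in\Gamma_0(N)$, and that $V_2$ commutes modulo $\Gamma_0(N)$ with each $w_{p^r}$ for odd $p\mid M$ (a direct matrix check). Consequently conjugation by $V_2$ is a well-defined $\Q$-rational involution of $X_0(N)$, and it sends the subgroup $\langle\Gamma_0(N),B(N)\rangle$ of the normalizer to $\langle\Gamma_0(N),V_2 B(N)V_2^{-1}\rangle$. The commutativity handles the odd Atkin-Lehner involutions, so the only nontrivial identification is
\[
V_2\,w_4\,V_2^{-1}\;\equiv\;S_2\pmod{\Gamma_0(N)}.
\]
This I would check directly: expand $V_2 w_4 V_2^{-1}=(S_2w_4S_2)w_4(S_2w_4S_2)^{-1}$, clear denominators into $\GL_2(\Z[1/2])$, and exhibit an explicit $\gamma\in\Gamma_0(N)$ with $V_2 w_4 V_2^{-1}=\gamma\cdot(cS_2)$ in $\GL_2^+(\Q)$ for some scalar $c\in\Q^\times$. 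The divisibility $4\mid N$ is exactly what is needed for the correction term $\gamma$ to have lower-left entry divisible by $N$.

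For the second isomorphism I would conjugate the whole group $\langle\Gamma_0(N),S_2,\{w_{p^r}\}\rangle$ by $\alpha_2=\begin{bmatrix}2&0\\0&1\end{bmatrix}$. A short computation gives $\alpha_2 S_2 \alpha_2^{-1}=T\in\Gamma_0(2M)$ and
\[
\alpha_2\begin{bmatrix}a&b\\c&d\end{bmatrix}\alpha_2^{-1}=\begin{bmatrix}a&2b\\c/2&d\end{bmatrix};
\]
since $c\in N\Z=4M\Z$, this lies in $\Gamma_0(2M)$ with even upper-right entry, so $\alpha_2\Gamma_0(N)\alpha_2^{-1}$ is precisely the index-$2$ subgroup of $\Gamma_0(2M)$ consisting of matrices with even upper-right entry. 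Adjoining $T$ then fills the remaining coset and yields all of $\Gamma_0(2M)$. A parallel computation on Atkin-Lehner matrices, using $\alpha_2\begin{bmatrix}p^ra&b\\Nc&p^rd\end{bmatrix}\alpha_2^{-1}=\begin{bmatrix}p^ra&2b\\2Mc&p^rd\end{bmatrix}$, shows that $\alpha_2 w_{p^r}\alpha_2^{-1}$ has the shape of an Atkin-Lehner matrix at level $2M$ with determinant $p^r$, hence represents $w_{p^r}$ on $X_0(2M)$. This gives the desired second isomorphism, and rationality in both steps is automatic since $V_2,S_2,\alpha_2\in\GL_2^+(\Q)$.

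The main obstacle I expect is the matrix bookkeeping in the first step: because $S_2$ has half-integer entries, the intermediate product $V_2 w_4 V_2^{-1}$ lives naturally in $\GL_2(\Z[1/2])$, and one must clear denominators and then verify both integrality and $N$-divisibility of the correction term $\gamma$. Once this single identification is pinned down, the rest of the argument is formal group-theoretic accounting, entirely parallel to the analysis carried out for $V_2$ and $V_3$ earlier in the section.
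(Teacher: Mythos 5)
First, note that the paper does not actually prove this proposition: it is quoted verbatim from Hasegawa (Proposition 7 of \cite{Hasegawa1997}), so your reconstruction is being measured against that source rather than against anything in the text. Your group-theoretic skeleton is sound and is essentially Hasegawa's. That $V_2$ normalizes $\Gamma_0(N)$ and commutes with the odd $w_{p^r}$ modulo $\Gamma_0(N)$ is Proposition \ref{v2prop}; the key identity $V_2w_4V_2^{-1}\equiv S_2 \pmod{\Q^\times\Gamma_0(N)}$ does hold (it reflects the fact that for $2^2\mid\mid N$ the $2$-part of the Atkin--Lehner--Conway normalizer quotient is isomorphic to $S_3$, in which the product of the two involutions $\bar{w}_4,\bar{S}_2$ has order $3$, so $\bar S_2(\bar w_4\bar S_2)^3=\bar S_2$); and your analysis of conjugation by $\alpha_2$ --- that $\alpha_2\Gamma_0(4M)\alpha_2^{-1}$ is the index-$2$ subgroup of $\Gamma_0(2M)$ of matrices with even upper-right entry, completed to all of $\Gamma_0(2M)$ by $T=\alpha_2S_2\alpha_2^{-1}$, while $\alpha_2w_{p^r}\alpha_2^{-1}$ is an Atkin--Lehner matrix of level $2M$ --- is correct. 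As isomorphisms of Riemann surfaces, both maps are therefore established by your computations.

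The genuine gap is the closing claim that ``rationality in both steps is automatic since $V_2,S_2,\alpha_2\in\GL_2^+(\Q)$.'' That principle is false, and the paper itself contains the counterexample: $V_3=S_3w_9S_3^2$ is likewise a product of matrices in $\GL_2^+(\Q)$, yet Proposition \ref{v3prop} records that it is in general only defined over $\Q(\sqrt{-3})$. Conjugation by a rational matrix always gives an isomorphism of Riemann surfaces, but whether it respects the canonical $\Q$-models is a separate question, governed by the $q$-expansion principle (equivalently, by the moduli interpretation): $S_2$ acts on expansions at $\infty$ by $\sum a_nq^n\mapsto\sum(-1)^na_nq^n$, which preserves $\Q$-rationality, whereas $S_3$ introduces $\zeta_3$ --- this is precisely why $V_2$ is rational and $V_3$ is not. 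Since the $\Q$-rationality is the part of the statement the paper actually uses (Propositions \ref{degree2maptostar} and \ref{not_tetragonal_degree2map} need $\Q$-rational degree-$2$ maps), you must supply this argument: show that $S_2$, $V_2$, and $\alpha_2$ (the latter via $f(\tau)\mapsto f(2\tau)$) preserve rational $q$-expansions and hence descend to morphisms of the canonical models over $\Q$. With that addition your proof is complete.
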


\begin{prop}\label{degree2maptostar}
    The curve $X_0^*(N)$ has $\Q$ and $\C$-gonality equal to $4$ for the following $18$ levels \[N\in\{212,244,268.292,316,332,340,364,372,396,412,444,460,532,572,620,660,780\}.\]
\end{prop}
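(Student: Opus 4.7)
The plan is to exploit Proposition \ref{prop4n}. Every level $N$ in the list satisfies $4 \mathrel{\|} N$, so write $N = 4M$ with $M$ odd; the levels $2M$ that arise are
\[
\{106,122,134,146,158,166,170,182,186,198,206,222,230,266,286,310,330,390\}.
\]
Proposition \ref{prop4n} supplies a $\Q$-isomorphism $X_0^*(N) \cong X_0(2M)/\langle w_{p^r}:p\mid M\rangle$. Taking the further quotient by $w_2$ (which is still a well-defined involution of the intermediate quotient) yields a $\Q$-rational degree $2$ map $\varphi\colon X_0^*(N)\to X_0^*(2M)$.

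The next step is to check, for each of the $18$ levels $2M$ above, that $X_0^*(2M)$ has $\C$-gonality at most $2$, i.e.\ that $X_0^*(2M)$ has genus $\leq 1$ or is hyperelliptic. This follows from the Furumoto--Hasegawa classification of hyperelliptic curves $X_0^*$ \cite{FurumotoHasegawa1999}; for the low-genus cases one either has genus $0$ or an elliptic curve $X_0^*(2M)$, which still has $\C$-gonality at most $2$. In each case a degree $2$ map $\psi\colon X_0^*(2M)\to\PP^1$ exists over $\Q$ (the hyperelliptic map is always $\Q$-rational, and genus $0,1$ curves with a rational point admit such a map). Composing $\psi\circ\varphi$ gives a $\Q$-rational map $X_0^*(N)\to\PP^1$ of degree $4$.

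It remains to establish the lower bound $\textup{gon}_\Q X_0^*(N)\geq 4$. By inspection each listed $X_0^*(N)$ has genus $\geq 5$, so by \cite{HasegawaShimura2000} together with Theorem \ref{trigonalthm} it is not $\Q$-trigonal, and by \cite{FurumotoHasegawa1999} it is not hyperelliptic; hence $\textup{gon}_\Q X_0^*(N)\geq 4$. Combined with the upper bound above, the $\Q$-gonality and therefore also the $\C$-gonality equal $4$.

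The only real obstacle is the verification in the second paragraph, but it is a finite check over the $18$ explicit levels $2M$, and each can be handled either by quoting \cite{FurumotoHasegawa1999} or by a direct \texttt{Magma} computation using the model produced by \texttt{X0Nstar()} together with the intrinsic \texttt{IsHyperelliptic}. Everything else reduces to bookkeeping and an application of Proposition \ref{poonen}(vi) to the composition $\psi\circ\varphi$.
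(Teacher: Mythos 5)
Your proposal is correct and follows essentially the same route as the paper: Proposition \ref{prop4n} gives the degree $2$ map $X_0^*(N)\to X_0^*(2M)$, and the paper then invokes \cite[Remark 1]{Hasegawa1997} to note that each of these eighteen curves $X_0^*(2M)$ has genus $2$ (hence a $\Q$-rational degree $2$ map to $\PP^1$), exactly the finite check you describe. Your additional remarks on the lower bound and on $w_2$ descending to the intermediate quotient are correct but are handled implicitly in the paper's overall setup.
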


\begin{proof}
    All these levels are such that $4\mid\mid N$. Thus we have a degree $2$ rational morphism \[X_0^*(N)\cong X_0(2M)/\left<\{w_{p^r}\}_{p\mid M}\right>\to X_0^*(2M).\] All these curves $X_0^*(2M)$ are of genus $2$ (\cite[Remark 1]{Hasegawa1997}), so we have a degree $4$ rational morphism to $\PP^1$.
\end{proof}

\begin{prop}\label{not_tetragonal_degree2map}
    The curve $X_0^*(N)$ has $\Q$-gonality at least $5$ for the following $20$ levels \begin{align*}
        N\in\{&596.900,948,996,1012,1032,1036,1044,1068,1164,1212,\\
        &1380,1740,1848,1932,2100,2220,2340,2460,2580\}.
    \end{align*}
\end{prop}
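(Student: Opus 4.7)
The plan is to apply the Castelnuovo--Severi inequality to a well-chosen degree-$2$ morphism out of $X_0^*(N)$. For the eighteen levels in the list satisfying $4\mid\mid N$, Proposition~\ref{prop4n} supplies a $\Q$-rational degree-$2$ morphism $\pi:X_0^*(N)\to Y$ with $Y=X_0^*(N/2)$. For the two remaining levels $N\in\{1032,1848\}$, which satisfy $8\mid N$, Proposition~\ref{v2prop} gives the involution $V_2$ on $X_0^*(N)$ defined over $\Q$, and I take $\pi:X_0^*(N)\to Y:=X_0^*(N)/\langle V_2\rangle$, again a degree-$2$ morphism defined over $\Q$.

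Now suppose for contradiction that a degree-$4$ $\Q$-rational morphism $\phi:X_0^*(N)\to\PP^1$ exists. Either $\phi$ factors through $\pi$, in which case there is a degree-$2$ map $Y\to\PP^1$ and $Y$ is rational, elliptic, or hyperelliptic; or the two morphisms are independent and Castelnuovo--Severi yields
\[
g(X_0^*(N))\ \leq\ (2-1)(4-1)+2\,g(Y)\ =\ 3+2\,g(Y).
\]
Hence, for each $N$ in the list it will suffice to verify two things: (a) $g(Y)\geq 2$ and $Y$ is not hyperelliptic, which rules out the factoring case; and (b) $g(X_0^*(N))>3+2\,g(Y)$, which rules out the independent case.

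The execution is a level-by-level numerical check. For the $4\mid\mid N$ cases the genus $g(Y)=g(X_0^*(N/2))$ is computed in \texttt{Magma}, and several of the relevant $N/2$ already appear in Propositions~\ref{genus6prop} and~\ref{betti0prop}, where their $\C$-gonality was shown to be at least $4$, so in particular they are not hyperelliptic; for the remaining $Y$'s non-hyperellipticity is confirmed via the Furumoto--Hasegawa classification or by a direct Betti or $\F_p$-gonality computation. For $N\in\{1032,1848\}$, I compute $g(Y)$ by counting $V_2$-invariant newforms via Lemma~\ref{lift_newform_lemma} and Proposition~\ref{v2newform_prop}, following the template of Example~\ref{v2example}, and verify non-hyperellipticity of $Y$ by examining its canonical model in \texttt{Magma}.

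The main obstacle is the pair of ``irregular'' levels $N\in\{1032,1848\}$: the natural target $X_0^*(N/2)$ is not accessible via Proposition~\ref{prop4n}, so the $V_2$-quotient has to be understood through newform bookkeeping, and non-hyperellipticity of the resulting $Y$ requires a separate model-level computation. In addition, the strict inequality $g(X_0^*(N))>3+2\,g(Y)$ has to hold for all twenty levels, and borderline cases where the genera are close would need to be handled by an alternative argument, for instance an $\F_p$-gonality bound in the spirit of \Cref{Fpsection}.
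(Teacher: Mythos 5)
Your overall strategy is genuinely different from the paper's, and it has a gap that is not just a matter of bookkeeping. The paper's proof is a one-liner: for each of these levels the degree-$2$ map $X_0^*(N)\to X_0^*(N/2)$ is \emph{dominant}, the targets $X_0^*(N/2)$ (namely $298,450,474,498,506,516,518,522,534,582,606,690,870,924,966,1050,1110,1170,1230,1290$) were already shown in Propositions \ref{Fp_gonality}, \ref{genus6prop} and \ref{betti0prop} to have $\Q$-gonality at least $5$, and Proposition \ref{poonen}(vii) says that gonality can only go \emph{up} along a dominant map, so $\textup{gon}_\Q(X_0^*(N))\geq\textup{gon}_\Q(X_0^*(N/2))\geq 5$. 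You instead try to prove the lower bound from scratch via Castelnuovo--Severi, which forces you to verify $g(X_0^*(N))>2g(Y)+3$. This inequality is far from automatic: by Riemann--Hurwitz a degree-$2$ map gives $g(X)=2g(Y)-1+r/2$ with $r$ the number of ramification points, so your condition is equivalent to $r\geq 10$. The ramification of these Atkin--Lehner-type degree-$2$ quotient maps is governed by class numbers of small imaginary quadratic discriminants and is frequently much smaller than $10$ (for $N=596$, for instance, the fixed points of the relevant involution on $X_0(298)/\langle w_{149}\rangle$ come from the $h(-4)$-, $h(-8)$- and $h(-4\cdot 298)$-type counts and a rough estimate gives $r$ around $6$, i.e.\ $g(X_0^*(596))\approx 16\leq 17=2\cdot 7+3$). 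So step (b) of your plan likely \emph{fails} for several, perhaps most, of the twenty levels, and your fallback ("handle borderline cases by $\F_p$-gonality") would then have to carry essentially the whole proof on curves of genus up to roughly $30$, which is exactly the kind of computation the pullback argument is designed to avoid. The missing idea is simply to use the already-established gonality lower bounds on the targets rather than re-deriving a bound on the source.

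On the positive side, your observation about $N=1032$ and $N=1848$ is sharp: these satisfy $8\mid N$, so Proposition \ref{prop4n} (which requires $N=4M$ with $M$ odd) does not literally apply, even though the paper's proof asserts $4\mid\mid N$ for all twenty levels; this is a real imprecision in the paper. However, your proposed fix is the wrong target: replacing $X_0^*(N/2)$ by $X_0^*(N)/\langle V_2\rangle$ loses the connection to the curves whose gonality is already known, and if that $V_2$-quotient happens to have small genus (as it does for every level in Proposition \ref{v2levels}) your case (a) cannot even be ruled out. What is actually needed for these two levels is a $\Q$-rational degree-$2$ (or at least dominant) map to a curve already known to have $\Q$-gonality at least $5$ --- e.g.\ to $X_0^*(516)$ and $X_0^*(924)$, which do appear in Proposition \ref{betti0prop} --- or, failing that, a direct $\F_p$-gonality computation for these two levels alone.
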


\begin{proof}
    All these levels are such that $4\mid\mid N$. Thus we have a degree $2$ rational morphism \[X_0^*(N)\cong X_0(2M)/\left<\{w_{p^r}\}_{p\mid M}\right>\to X_0^*(2M).\] Moreover, we have proven in Propositions \ref{Fp_gonality}, \ref{genus6prop}, and \ref{betti0prop} that these curves $X_0^*(2M)$ are not $\Q$-tetragonal. Therefore, the curves $X_0^*(N)$ are also not $\Q$-tetragonal by Proposition \ref{poonen} (vii).
\end{proof}

\section{Proofs of the main theorems}\label{thmproof_section}

The proof of Theorem \ref{trigonalthm} is given in Proposition \ref{genus4prop} where we determine the fields of definition of trigonal maps. We now give the proof of Theorem \ref{tetragonalthm}.

\begin{proof}[Proof of Theorem \ref{tetragonalthm}]
    If $N$ is a prime power, then $X_0^*(N)=X_0^+(N)$. All tetragonal curves $X_0^+(N)$ were determined in \cite{Orlic2023}. Let us suppose now that $N$ is not a prime power. Furthermore, all hyperelliptic and trigonal curves $X_0^*(N)$ were determined in \cite{Hasegawa1997} and \cite{HasegawaShimura2000}. Thus we may suppose that the $\C$-gonality of $X_0^*(N)$ is at least $4$.

    The gonality of genus $4$ curves was determined in Proposition \ref{genus4prop}. We gave the list of all possible tetragonal levels $N$ is \Cref{preliminaries_section}. For levels $N$ of genus $g\geq5$ in the table of the theorem we prove the existence of a degree $4$ rational morphism to $\PP^1$ in Propositions \ref{genus6prop}, \ref{betti_positive_prop}, \ref{pointsearch}, \ref{quadpts}, \ref{v2levels}, \ref{v3levels}, and \ref{degree2maptostar}.

    For the remaining levels $N$ we disproved the existence of a degree $4$ map as follows. For genus $6$ curves we used Proposition \ref{genus6prop} to prove that their $\Q$-gonality is larger than $4$. However, the $\C$-gonality is equal to $4$ due to Proposition \ref{poonen} (v).

    For curves of genus $g=7,8,9$ and for $N=416$ ($g=10$) we used Proposition \ref{betti0prop} to prove that the $\C$-gonality is larger than $4$. For other curves of genus $g\geq10$ we used Propositions \ref{Fp_gonality}, \ref{Fp2points}, and \ref{not_tetragonal_degree2map} to prove that their $\Q$-gonality is larger than $4$. By Corollary \ref{towerthmcor}, their $\C$-gonality is then also larger than $4$.

    Since the curve $X_0^*(378)$ is of genus $5$, Proposition \ref{poonen} (v) tells us that its $\C$-gonality is at most $4$. Therefore, it is equal to $4$ since it is neither hyperelliptic nor trigonal.
\end{proof}

\subsection{$N=378$}\label{subsection378}

The curve $X_0^*(378)$ has degree $4$ rational morphisms over $\F_p$ to $\PP^1$ for all primes $p\leq79$ of good reduction. However, we were unable to find a degree $4$ rational map from the genus $5$ curve $X_0^*(378)$ to $\PP^1$ with Propositions \ref{pointsearch} and \ref{quadpts}. We found $2$ rational points on it using the \texttt{Magma} function PointSearch() with the height bound up to $10000000$. Hence these are most likely the only rational points on the curve.

Since this genus $5$ curve is not trigonal by \cite{Hasegawa1997}, its Petri's model is the intersection of $3$ quadrics

\[2x_1^2 - 210x_2^2 + 2045x_2x_3 - 5578x_3^2 + 138x_1x_4 - 3688x_2x_4 +
    20248x_3x_4 - 15144x_4^2 + 352x_1x_5 +\] \[+ 9046x_2x_5 - 67294x_3x_5 +
    123188x_4x_5 - 259244x_5^2,\]
\[2x_1x_2 - 28x_2^2 + 224x_2x_3 - 587x_3^2 + 6x_1x_4 - 368x_2x_4 +
    2189x_3x_4 - 1770x_4^2 + 96x_1x_5 + 718x_2x_5 -\] \[- 6945x_3x_5 +
    14446x_4x_5 - 30598x_5^2,\]
\[-x_2^2 + x_1x_3 + 6x_2x_3 - 21x_3^2 - x_1x_4 - 15x_2x_4 + 117x_3x_4 -
    114x_4^2 + 13x_1x_5 + 4x_2x_5 - 335x_3x_5 +\] \[+938x_4x_5 - 2012x_5^2.\]

We can check this with \texttt{Magma} code \begin{align*}
    &\textup{X:=X0Nstar(378)};\\
    &\textup{CanonicalImage(X,CanonicalMap(X));}
\end{align*},

The function Genus5GonalMap() yields a degree $4$ map to $\PP^1$ defined over the quadratic field $\Q(\sqrt{-7})$, although this is not necessarily the smallest possible field. For a general curve of genus $5$ ($\gon_\C=4$) there are infinitely many equivalence classes of gonal maps which are parametrized by a plane quintic curve $F$ \cite{magma}. This plane quintic is in the output of the function Genus5GonalMap().



We found $2$ rational points on $F$ and these $2$ are likely the only rational points. If we could find all rational points on $F$, we could determine if there is a degree $4$ rational map $X_0^*(378)\to\PP^1$, see \cite[Section 3.4]{DerickxMazurKamienny}. However, this is generally a difficult problem.

The curve $X_0^*(378)$ could have $\Q$-gonality equal to $4$ or $5$, although most of the arguments presented here suggest that it is $5$.

\bibliographystyle{siam}
\bibliography{references}

\end{document}